\documentclass[12pt]{article}
\setlength{\textheight}{620pt}
\setlength{\textwidth}{440pt}

\usepackage{amscd}
\usepackage{latexsym,amsmath,amssymb,amsbsy, amsthm}
\usepackage[english]{babel}
\usepackage[square,numbers]{natbib}
\bibliographystyle{abbrvnat}
\usepackage{subcaption}

\usepackage{hyperref}
\usepackage{lscape,fancyhdr,fancybox}
\usepackage{graphicx,epsfig, xy}
\usepackage{color}
\usepackage[hmarginratio=1:2, vmarginratio =5:5,
textheight=22cm,bindingoffset=1.5cm, textwidth=14.6cm]{geometry}
\usepackage{tikz}




\newcommand{\comment}[1]{}

\numberwithin{equation}{section}

\newtheorem{remark}{Remark}[section]
\newtheorem{theorem}{Theorem}[section]

\newtheorem{lemma}{Lemma}[section]

\theoremstyle{definition}
\newtheorem{definition}{Definition}[section]

\newtheorem{result}{Result}[section]

\usepackage{amsfonts}

\usepackage{txfonts}

\DeclareMathOperator{\sgn}{sgn} 
 
\DeclareMathOperator{\Tr}{Tr}

\newcommand{\beq}{\begin{eqnarray}}
\newcommand{\eeq}{\end{eqnarray}}
\newcommand{\ben}{\begin{eqnarray*}}
\newcommand{\een}{\end{eqnarray*}}

\allowdisplaybreaks

\begin{document}
\def\shorttitle{Covariance matrix}
 \def\shortauthors{A. Bose, P. Sen}
\title{\textbf{\Large \sc
\Large{$XX^T$ matrices with independent entries} 
}}\small

\author{
 \parbox[t]{0.20
\textwidth}{{\sc Arup Bose}
 \thanks{Statistics and Mathematics Unit, Indian Statistical Institute, 203 B.T. Road, Kolkata 700108, India. email: bosearu@gmail.com. }}
\parbox[t]{0.25\textwidth}{{\sc Priyanka Sen}
 \thanks{Statistics and Mathematics  Unit, Indian Statistical Institute, 203 B.T. Road, Kolkata 700108, INDIA. email: priyankasen0702@gmail.com}}
}

\date{\today}  
\maketitle

\begin{abstract}
Let $S=XX^T$ be the (unscaled) sample covariance matrix where $X$ is a real $p \times n$ matrix with independent entries. It is well known that if the entries of $X$ are independent and identically distributed (i.i.d.) with enough moments and $p/n \to y\neq 0$, then the limiting spectral distribution (LSD) of $\frac{1}{n}S$ converges to a Mar$\check{\text{c}}$enko-Pastur law. Several extensions of this result are also known. We prove a general result on the existence of the LSD of $S$ in probability or almost surely, and in particular, many of the above results follow as special cases. At the same time several new LSD results also follow from our general result.

The moments of the LSD are quite involved but can be described via a set of partitions. Unlike in the i.i.d. entries case, these partitions are not necessarily non-crossing, but are related to the special symmetric partitions which are known to appear in the LSD of (generalised) Wigner matrices with independent entries.

We also investigate the existence of the LSD of $S_{A}=AA^T$ when $A$ is the $p\times n$ symmetric or the asymmetric version of any of the following four random matrices: reverse circulant, circulant, Toeplitz and Hankel. The LSD of $\frac{1}{n}S_{A}$ for the above four cases  have been studied in \citep{bose2010limiting} when the entries are i.i.d. We show that under some general assumptions on the entries of $A$, the LSD of $S_{A}$ exists and this result generalises the existing results of \citep{bose2010limiting} significantly.  
\end{abstract}

\vskip 5pt
\noindent \textbf{Key words and phrases.}  Cumulant and free cumulant, empirical and expected empirical distribution, limiting spectral distribution, exploding moments, free Poisson, hypergraphs, Mar$\check{\text{c}}$enko-Pastur law, multiplicative extension, non-crossing partition, special symmetric partition, variance profile, band matrix, sparse matrix, circulant matrix, reverse circulant matrix, sample covariance matrix, Toeplitz matrix, Hankel matrix,  generalised Wigner matrix.

\medskip

\section{Introduction}\label{introduction}
 Suppose $M_n$ is an $n\times n$ real symmetric random matrix (i.e., a matrix whose entries are random variables) with (real) eigenvalues $\lambda_1,\lambda_2,\ldots,\lambda_n$. Its \textit{empirical spectral measure} is the random probability measure:
\begin{align*}
\mu_{M_n} = \frac{1}{n} \sum_{i=1}^{n}  \delta_{\lambda_i},
\end{align*}
where $\delta_x$ is the Dirac measure at $x$. The random probability distribution function, $F^{M_n}$, known as the \textit{empirical spectral distribution} (ESD) of $M_n$ is given by 
$$F^{M_n}(x)= \frac{1}{n}\displaystyle \sum_{i=1}^n \boldsymbol{1} (\lambda_i\leq x).$$
The expectation of the above distribution function, denoted by $\mathbb{E}[F^{M_n}]$, is a non-random distribution function and is known as the 
\textit{expected empirical spectral distribution} (EESD). The corresponding probability measure will be denoted by $\mathbb{E}\mu_{M_n}$.
 The notions of convergence that are used in this article are: (i) the \textit{(weak) convergence of the EESD} $\mathbb{E}\mu_{M_n}$, and (ii) the \textit{(weak) convergence of the 
ESD} $\mu_{M_n}$  (either in \textit{probability} or \textit{almost surely (a.s.)}). The limits in (i) and (ii) are identical when the latter limits are non-random. In any case, any of these limits will be referred to as the \textit{limiting spectral distribution} (LSD) of $\{M_n\}$. 

Now let $X_p$ be a $p \times n$ matrix  with real independent entries $\{x_{ij,n}: 1\leq i \leq p, 1 \leq j \leq n\}$, where $p=p(n), p/n \rightarrow y\in (0,\infty)$. The matrix $S=X_pX_p^{T}$ will be called the \textit{Sample covariance matrix} (without scaling).
Note that the entries of $X_p$ are not necessarily identically distributed, and they do not necessarily have identical variances. We will also be interested in the matrix $S_{A}= A_pA_p^T$ where $A_p$ is any one of the $p \times n$ patterned matrices, namely  reverse circulant, circulant, Toeplitz and Hankel, with entries that are real and independent. 

We explore the existence of the LSD of $S$ and $S_A$ under suitable conditions on the entries of $X_p$ and $A_p$. 
The motivation to work on these problems, along with a brief discussion to relate our results with the models and results that already exist in  the literature are given below. Our two main theorems, namely Theorems \ref{res:XXt} and \ref{res:main}, are given in Section \ref{main results}. 
\vskip5pt

\noindent (a) The $S$ matrix is arguably one of the most important matrices in random matrix theory with varied applications in physics, statistics and other areas. There have been several works regarding its LSD.  When the entries of $X_p$ are i.i.d. with mean zero and  finite fourth moment, \citep{marchenko1967distribution} first established the LSD of $\frac{1}{n}S$ and this LSD has been named the Mar$\check{\text{c}}$enko-Pastur (MP) law. Subsequent works by \citep{grenander1977spectral}, \citep{wachter1978strong}, \citep{yin1986limiting}, \citep{belinschi2009spectral}, investigated the existence and properties of the LSD under varied assumptions on the entries. In these works the distribution of the entries of $X_p$ remain unaltered for every $n$. 

It is natural to ask what happens when the distribution of the entries depend on $n$ and/or the entries are not identically distributed. 
The convergence of the ESD of $\frac{1}{a_n^2}S$, when the entries of $X_p$ are i.i.d. with heavy tails and $a_n$ is a sequence of constants related to the tail probability of the entry distribution, was proved in \citep{belinschi2009spectral}. There, an appropriate truncation of the variables at levels that depend on $n$ was crucial in the arguments. Thus it becomes relevant to probe the case where the distribution of the entries of $X_p$ is allowed to depend on $n$, not just due to a scaling constant that depends on $n$ but where a genuine triangular sequence of entries is used. 

Such a model was already considered by Zakharevich \citep{zakharevich2006generalization} for the (symmetric) Wigner matrix. For any distribution $F$, let $\beta_k(F)$ be the $k$th moment of $F$.  
Consider a \textit{generalized Wigner matrix} $W_n$ whose entries are $\{x_{ij,n}; 1 \leq i \leq j \leq n\}$ (with $x_{ij,n}=x_{ji,n}$) with distribution $G_n$ for every fixed $n$. Assume that, 
\begin{equation}\label{zak}
\displaystyle{\lim_{n\to\infty} \frac{\beta_k(G_n)}{n^{k/2-1}\beta_2(G_n)^{k/2}}}= g_k,  \ \text{say,  exists for all} \ \ k\geq 1.
\end{equation}
 Then she proved that the ESD of $\frac{W_n}{\sqrt{n\mu_n(2)}}$ converges in probability to a distribution $\mu_{zak}$ that depends only on the sequence $\{g_{2k}\}$. LSD of Wigner matrices with general independent triangular array of entries were explored in \citep{bose2022random}. They found that a class of partitions, the \textit{special symmetric partitions}, play a crucial role in the moments of the LSD. 

Matrices whose entries satisfy conditions like \eqref{zak}, are referred to as \textit{matrices with exploding moments} and have been considered by several authors.  In particular, the $S$ matrix with exploding moments have been studied in Theorem 3.2 of \citep{Benaych-Georges2012},  and Proposition 3.1 of\citep{noiry2018spectral}. Moreover, formulae for the moments of the LSD have been provided using free probability theory and graph theory respectively.


We establish LSD results for the $S$ matrix (see Theorem \ref{res:XXt}) where the distribution of any entry is allowed  to be dependent not only on $n$ but also on its position in the matrix. We describe a formula for the moments of the LSD using certain partitions. We relate these moments not only to the ones that have appeared in \citep{Benaych-Georges2012} and \citep{noiry2018spectral} but also to the limiting moments in the (generalised) Wigner case (\citep{zakharevich2006generalization},\citep{bose2022random})--under our assumptions, only the class of special symmetric partitions  contribute to the moments.

In Section \ref{simulations}, we provide some simulations to show a glimpse of the various distributions that can appear as the LSD. In Section \ref{realtion to existing results}, we discuss how Theorem \ref{res:XXt} brings the various results such as \citep{marchenko1967distribution}, \citep{belinschi2009spectral},  \citep{Benaych-Georges2012}, \citep{noiry2018spectral}, \citep{Dykema2002DToperatorsAD} etc., under one umbrella, as well as generates some new results.
As a special case of Theorem \ref{res:XXt}, the ESD of $S$ with sparse entries converges a.s. (see Section \ref{subsubparse}), and we relate this LSD to the free Poisson and Poisson distributions. \textit{Matrices with variance profile} also come under our purview (see Section \ref{var-prof-smatrix}) and again, under suitable assumptions, the a.s. convergence of the ESD of $S$ holds. 
\\

\noindent (b) Let us now consider the matrix $S_{A}=A_pA_p^T$ where $A_p$  has one of the following patterns:
\begin{align*}
T^{(s)}= \begin{bmatrix}
x_0 & x_1 & x_2 & \cdots &  x_{n-1} \\
x_1 & x_0 & x_1 & \cdots &  x_{n-2}\\
x_2 & x_1 & x_0 & \cdots &  x_{n-3} \\
\vdots & \vdots & {\vdots} & \ddots  & \vdots \\
x_{p-1} & x_{p-2} & x_{p-3} & \cdots  & x_{|p-n|}
\end{bmatrix}, \ \ \ T=\begin{bmatrix}
x_0 & x_1 & x_2 & \cdots &  x_{n-1} \\
x_1 & x_0 & x_1 & \cdots &  x_{n-2}\\
x_2 & x_1 & x_0 & \cdots &  x_{n-3} \\
\vdots & \vdots & {\vdots} & \ddots  & \vdots \\
x_{p-1} & x_{p-2} & x_{n-3} & \cdots  & x_{p-n}
\end{bmatrix},
\end{align*}

\begin{align*}
H^{(s)}= \begin{bmatrix}
x_2 & x_3 & x_4 & \cdots &  x_{n+1} \\
x_3 & x_4 & x_5 & \cdots &  x_{n+2}\\
x_4 & x_5 & x_6 & \cdots &  x_{n+3} \\
\vdots & \vdots & {\vdots} & \ddots  & \vdots \\
x_{p+1} & x_{p+2} & x_{n+3} & \cdots  & x_{p+n}
\end{bmatrix}, \ \ \ H=\begin{bmatrix}
x_2 & x_{-3} & x_{-4} & \cdots &  x_{-(n+1)} \\
x_3 & x_4 & x_{-5} & \cdots &  x_{-(n+2)}\\
x_4 & x_5 & x_6 & \cdots &  x_{-(n+3)} \\
\vdots & \vdots & {\vdots} & \ddots  & \vdots \\
x_{p+1} & x_{p+2} & x_{p+3} & \cdots  & x_{-(p+n)}
\end{bmatrix},
\end{align*}

\begin{align*}
R^{(s)}= \begin{bmatrix}
x_0 & x_1 & x_2 & \cdots   & x_{n-1} \\
x_1 & x_2 & x_3 & \cdots   & x_{0}\\
x_2 & x_3 & x_4 & \cdots   & x_{1} \\
\vdots & \vdots & {\vdots} & \ddots & \vdots \\
x_{(p-1)\text{mod }n} &  \cdots  &  & \cdots  & x_{(p-2)\text{mod }n}
\end{bmatrix}, \ \ \ R=\begin{bmatrix}
x_0 & x_1 & x_2 & \cdots   & x_{n-1} \\
x_{-1} & x_2 & x_3 & \cdots   & x_{0}\\
x_{-2} & x_{-3} & x_4 & \cdots   & x_{1} \\
\vdots & \vdots & {\vdots} & \ddots & \vdots \\
x_{-(p-1)\text{mod }n} &  \cdots  &  & \cdots  & x_{(p-2)\text{mod }n}
\end{bmatrix},
\end{align*}
%

\begin{align*}
C^{(s)}= \begin{bmatrix}
x_0 & x_1 & x_2 & \cdots  & x_1 \\
x_1 & x_0 & x_1 & \cdots  & x_{2}\\
x_{2} & x_1 & x_0 & \cdots & x_{3}\\
\vdots & \vdots & {\vdots} & \ddots  & \vdots \\
x_{n/2-|n/2-|p-1||} & \cdots & \cdots & \cdots  & x_{n/2-|n/2-|p-n||}
\end{bmatrix}, 
\end{align*}

\begin{align*}
C=\begin{bmatrix}
x_0 & x_1 & x_2 & \cdots  & x_{n-1} \\
x_1 & x_0 & x_1 & \cdots  & x_{n-2}\\
x_{2} & x_1 & x_0 & \cdots & x_{n-3}\\
\vdots & \vdots & {\vdots} & \ddots  & \vdots \\
x_{(p-1)(\text{mod }n)} & \cdots & \cdots & \cdots  & x_{(p-n)(\text{mod }n)}
\end{bmatrix}.
\end{align*}
We have dropped the suffix $p$ here for ease of reading. The matrices $T^{(s)},H^{(s)},R^{(s)}$ and $C^{(s)}$ are the rectangular versions of the symmetric Toeplitz, Hankel, reverse circulant and circulant matrices where the $(i,j)$th entry is equal to the $(j,i)$th entry whenever $1 \leq i,j \leq \min(p,n)$. The matrices $T,H,R$ and $C$ are the \textit{asymmetric} versions of these matrices. These matrices can also be described via \textit{link functions} (see Section \ref{preliminaries-SA}).
\citep{bose2010limiting} showed that when the entries of $A_p$ come from a single i.i.d. sequence with mean zero and variance 1, 
the ESD of $\frac{1}{n}S_{A}$ converges a.s. to a non-random probability distribution. 
We generalise this result by allowing the distribution of the entries to vary with $n$ as well as with their positions in the matrix.

Such a model for \textit{symmetric patterned matrices} such as reverse circulant, circulant, Toeplitz and Hankel was considered in \citep{bose2021some}.

Under assumptions on $A_p$ similar to those used for $X$, the EESD of $S_{A}$ converges. 
In particular, this convergence holds for the special cases when $A_p$ is a triangular, a band or a block matrix, has a variance profile or is a matrix with exploding moments.
Illustration of the variety of distributions that appear as the LSD of $S_{A}$ is given in Section \ref{simulations}.
 
Theorem \ref{res:main} claims the convergence only of the EESD. The a.s. or in probability convergence of the ESD  to a non-random probability measure is not true in general. 
Simulation given in Figures \ref{fig:SA-1} and \ref{fig:SA-2} confirm this. This is very different from the case of the $S$ matrix. In particular for the sparse case, the ESD does not converge to a non-random limit. This is similar to the phenomenon observed by \cite{banerjee2017patterned} for certain symmetric patterned sparse random matrices. 
Of course, as mentioned above in special cases the a.s. convergence can hold, see\citep{bose2010limiting}.

   We find some relationships between the LSDs of $S_{R^{(s)}}, S_{C}, S_T$ and $S_{H^{(s)}}$. For instance, when the entries are i.i.d. for every $n$ and have exploding moments, the LSDs of $S_T$ and $S_{H^{(s)}}$ are identical; so are the LSDs of $S_C$ and $S_{R^{(s)}}$. In Section \ref{application-AAt}, we discuss the connection of our theorem to some existing results.\\

 
\section{Main results}\label{main results}
The notion of \textit{multiplicative extension} is required to describe our results. Let $[k]:=\{1,2,\ldots, k\}$ and let $\mathcal{P}(k)$ denote the set of all partitions of $[k]$. Let $\mathcal{P}_{2}(2k)$ be the set of \textit{pair-partitions} of $[2k]$.   
Suppose $\{c_k, \ k \geq 1\}$ is any sequence of numbers. Its multiplicative extension is defined on $\mathcal{P}(k)$, $k \geq 1$ as follows. For any $\sigma\in \mathcal{P}_k$,  define 
\begin{equation*}c_{\sigma}=\prod_{V \ \text{is a block of}\ \sigma} c_{|V|}.
\end{equation*}

Consider the matrix $S=X_pX_p^T$, where the entries of $X_p$ are given by the bi-sequence $\{x_{ij,n}\}$. We drop the suffix $n$ and $p$ for convenience wherever there is no scope for confusion. For any real-valued function $g$ on $[0, \ 1]$,  $\|g\|:=\sup_{0 \leq x\leq 1} |g(x)|$ will denote its sup norm. We introduce the following assumptions on the entries $\{x_{ij}\}$. \\

\noindent \textbf{Assumption A}. 
There exists a sequence $\{t_n\}$ with $t_n\in [0,\infty]$ such that 
\begin{enumerate}
\item [(i)] For each $k \in \mathbb{N}$,
\begin{align}
 & n \ \mathbb{E}\left[x_{ij}^{2k}\boldsymbol {1}_{\{|x_{ij}|\leq t_n\}}\right]= g_{2k,n}\Big(\frac{i}{p},\frac{j}{n}\Big) \ \ \ \text{for } \ 1\leq i\leq p, 1 \leq j \leq n, \label{gkeven}\\
& \displaystyle \lim_{n \rightarrow \infty} \ n^{\alpha} \underset{1\leq i \leq p,1 \leq j \leq n}{\sup} \ \mathbb{E}\left[x_{ij}^{2k-1}\boldsymbol {1}_{\{|x_{ij}|\leq t_n\}}\right] = 0  \ \ \text{for all } \alpha<1 . \label{gkodd}
\end{align}
where $\{g_{2k,n}\}$ is a sequence of bounded Riemann integrable functions on $[0,1] ^2$. 

\item [(ii)] The functions $g_{2k,n}(\cdot), n \geq 1$ converge uniformly to $g_{2k}(\cdot)$ for each $k \geq 1$. 
\item[(iii)] With  $M_{2k}=\|g_{2k}\|$, $M_{2k-1}=0$ for all $k \geq 1$, the sequnce $\alpha_{k}=  \sum_{\sigma \in \mathcal{P}(2k)} M_{\sigma}$  satisfies \textit{Carleman's condition}, 
$$ \displaystyle \sum_{k=1}^{\infty} \alpha_{2k}^{-\frac{1}{2k}}= \infty.$$
\end{enumerate}
All of these conditions are naturally satisfied by well-known models such as, the \textit{standard i.i.d.} case where the entries of $X$ are $\frac{x_{ij}}{\sqrt{n}}$ with $\{x_{ij}\}$ being i.i.d. with zero mean and finite variance, and the \textit{sparse} case  where entries of $X$ are i.i.d, $Ber(p_n)$ with $np_n \rightarrow \lambda>0$, for every $n$, etc. We will discuss this in more details in Section \ref{res:XXt}. Now we state our first result.
\begin{theorem}\label{res:XXt}
Let $X$ be a $p \times n$ real matrix with independent entries $\{x_{ij,n}; 1\leq i\leq p, 1\leq j \leq n\}$ that satisfy Assumption A and $p/n \rightarrow y\in(0,\infty)$ as $n \rightarrow \infty$. Suppose $Z$ is a $p \times n$ real matrix whose entries are $y_{ij}=x_{ij}\boldsymbol {1}_{[|x_{ij}|\leq t_n]}$. Then
\begin{enumerate}
\item[(a)] The ESD of $S_Z=ZZ^T$ converges a.s. to a probability measure $\mu$ say, whose moments are determined by the functions $\{g_{2k}\}_{k\geq 1}$ as described in \eqref{moment-XXt}.
\item[(b)] Moreover, if 
\begin{equation}\label{truncation-ineq}
\frac{1}{n} \displaystyle{\sum_{i=1}^{p}\sum_{j=1}^n \ x_{ij}^2\boldsymbol {1}_{\{|x_{ij}| > t_n\}}} \rightarrow 0, \ 
\mbox{a.s. (or in probability)},
\end{equation}
 then the ESD of $S=XX^T$ converges a.s. (or in probability) to the probability measure $\mu$ given in (a).
\end{enumerate} 
\end{theorem}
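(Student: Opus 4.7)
The plan is to use the method of moments. For part (a), I will show that the normalised expected trace $\tfrac{1}{p}\E[\Tr((ZZ^T)^k)]$ converges for each $k$, upgrade this to almost sure convergence of $\tfrac{1}{p}\Tr((ZZ^T)^k)$ via a variance estimate, and invoke Carleman's condition from Assumption A(iii) to conclude a.s.\ weak convergence of the ESD. Part (b) will then follow from a standard Lindeberg-type truncation argument using the Hoffman--Wielandt inequality and the hypothesis \eqref{truncation-ineq}.

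The starting point for part (a) is the trace expansion
\begin{equation*}
\Tr\bigl((ZZ^T)^k\bigr)=\sum_{i_1,\ldots,i_k;\,j_1,\ldots,j_k} y_{i_1 j_1}\,y_{i_2 j_1}\,y_{i_2 j_2}\,y_{i_3 j_2}\cdots y_{i_k j_k}\,y_{i_1 j_k},
\end{equation*}
which is a sum over $2k$-step closed circuits on a bipartite vertex set of sizes $p$ and $n$. By independence, the expectation factors according to a partition $\sigma\in\mathcal{P}(2k)$ recording which of the $2k$ entry-positions $(i_\cdot,j_\cdot)$ coincide. From \eqref{gkeven}--\eqref{gkodd}, a block of size $2\ell$ contributes a factor of order $n^{-1}g_{2\ell,n}(i/p,j/n)$ while blocks of odd size contribute $o(n^{-1+\alpha})$ uniformly, so only partitions with all-even blocks survive in the limit. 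For such a $\sigma$ with $b$ blocks, the moment product scales as $n^{-b}$; combined with the prefactor $1/p\sim 1/(yn)$ and the count of index tuples compatible with $\sigma$, only partitions for which the number of free vertex indices equals $b+1$ contribute at leading order. These form a bipartite analogue of the \emph{special symmetric} partition family from \citep{bose2022random}. Replacing the sum over surviving index tuples by a Riemann integral of the multiplicative extension $g_\sigma$ over a cube of appropriate dimension yields the moment formula \eqref{moment-XXt}, and Carleman's condition then pins down a unique limit measure $\mu$.

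To upgrade from mean to almost sure convergence, I would estimate $\var\bigl(\tfrac{1}{p}\Tr((ZZ^T)^k)\bigr)$ by a double-circuit expansion: pairs of circuits whose edge supports are disjoint contribute zero to the covariance, and joined pairs yield a bound of order $n^{-2}$, so Borel--Cantelli gives almost sure convergence of every moment. For part (b), Hoffman--Wielandt gives
\begin{equation*}
\sum_{i=1}^{p}\bigl(\lambda_i(S)-\lambda_i(S_Z)\bigr)^{2}\leq\norm{S-S_Z}_F^2\leq 2\bigl(\norm{X}^{2}+\norm{Z}^{2}\bigr)\norm{X-Z}_F^{2},
\end{equation*}
with $\norm{X-Z}_F^2=\sum_{i,j}x_{ij}^2\1_{\{|x_{ij}|>t_n\}}$. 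Together with a priori operator-norm bounds on $X$ and $Z$ of order $\sqrt{n}$ extracted from Assumption A and the hypothesis \eqref{truncation-ineq}, this forces the Levy distance between $F^S$ and $F^{S_Z}$ to zero (a.s.\ or in probability, matching the mode of \eqref{truncation-ineq}), transferring the LSD from $S_Z$ to $S$.

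The technical heart of the argument is the combinatorial classification of the contributing partitions in the bipartite circuit setting and the verification that they coincide with a suitable special symmetric class; a clean expression for the limit also requires the passage from index sums to Riemann integrals against the varying block profiles $g_{2\ell}$. The most delicate bookkeeping comes from the distinct roles of the $i$- and $j$-indices and the alternating pattern $(i_\ell,j_\ell),(i_{\ell+1},j_\ell)$ in the trace, which distinguishes the $XX^T$ setting from the symmetric Wigner case and produces the generally crossing partition families alluded to in the abstract.
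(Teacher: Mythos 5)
Your overall architecture (moment method for $S_Z$, then a truncation/perturbation step for $S$) matches the paper's, and your first-moment analysis --- circuit expansion, elimination of odd blocks, identification of the contributing class with the special symmetric partitions, and the Riemann-sum passage to the integral formula \eqref{moment-XXt} --- is essentially the paper's Step 3. However, there are two concrete gaps. First, the almost-sure upgrade: you claim that a double-circuit expansion bounds $\var\bigl(\tfrac1p\Tr((ZZ^T)^k)\bigr)$ by $O(n^{-2})$. The naive count does not give this. A pair of jointly- and cross-matched circuits with $b$ distinct letters has up to $b+1$ free generating vertices, and the moment contribution per circuit pair is of order $n^{-b}$, so the covariance sum is only $O\bigl(\tfrac{1}{p^2}\,n^{b+1}\,n^{-b}\bigr)=O(n^{-1})$, which is not summable and Borel--Cantelli fails. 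Obtaining $O(n^{-2})$ for the variance requires exhibiting cancellation beyond crude counting. This is precisely why the paper instead verifies the \emph{fourth}-moment condition (Lemma \ref{lem:genmoment}(ii), via Lemma \ref{lem:moment}): the quadruple-circuit count $Q_{k,4}^b\le Cn^{b+2}$ yields $\tfrac{1}{p^4}\E[\Tr(S_Z^k)-\E\Tr(S_Z^k)]^4=O(p^{-3/2})$, which is summable. You should also note that the survival of only matched partitions in your expectation computation presupposes centered entries; Assumption A only gives $n^{\alpha}\sup\E[y_{ij}]\to0$ for $\alpha<1$, which is not enough to kill singleton blocks directly, so the paper's Step 1 (reduction to mean zero via the L\'evy-metric comparison with $\widetilde Z$) is genuinely needed and is missing from your outline.

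Second, your part (b) does not close as written. You apply Hoffman--Wielandt to $S-S_Z$ and bound $\|S-S_Z\|_F^2\le 2(\|X\|^2+\|Z\|^2)\|X-Z\|_F^2$ with operator norms of order $\sqrt n$. Under \eqref{truncation-ineq} one only has $\|X-Z\|_F^2=\sum_{i,j}x_{ij}^2\boldsymbol 1_{\{|x_{ij}|>t_n\}}=o(n)$, so your chain gives $L^3(F^S,F^{S_Z})\le\tfrac{2}{p}(\|X\|^2+\|Z\|^2)\,o(n)=O(n)\cdot o(1)$, which does not tend to zero; and $O(1)$ operator-norm bounds are not available under Assumption A (the limit $\mu$ can even have unbounded support, cf.\ Remark \ref{unbounded support}). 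The paper's route avoids this loss by applying the perturbation inequality at the level of the \emph{singular values} of $X$ and $Z$ together with Cauchy--Schwarz (Lemmas \ref{L-inequality-set} and \ref{trace inequality}, packaged as Lemma \ref{lem:metric}), giving
\begin{equation*}
L^4\bigl(F^{S},F^{S_Z}\bigr)\le \frac{2}{p^2}\,\Tr\bigl(XX^T+ZZ^T\bigr)\,\Tr\bigl[(X-Z)(X-Z)^T\bigr],
\end{equation*}
in which only the trace (Frobenius) quantities $\tfrac1p\Tr(XX^T+ZZ^T)=O(1)$ a.s.\ and $\tfrac1p\Tr[(X-Z)(X-Z)^T]\to0$ are needed. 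Replacing your inequality by this one, and your variance estimate by the fourth-moment estimate, repairs the argument.
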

\begin{remark}\label{unbounded support}
While the $MP_y$ law has bounded support, that is not necessarily the case for $\mu$ in Theorem \ref{res:XXt}. Suppose the entries of $X$ satisfy Assumption A. Let for every $m\geq 1$, $f_{2m}(x)=\int_{[0,1]}g_{2m}(x,y)\ dy$. Now suppose that there exist an $m>1$ such that $ \displaystyle \inf_{t\geq 1}\int_{[0,1]}\bigg(\frac{f_{2m}(x)}{m!}\bigg)^t \ dx = c >0$. Then the LSD $\mu$ in Theorem \ref{res:XXt} has unbounded support. 

This has implications on the partition description of moments. As is known, the moments of the $MP_y$ can be described via the set of non-crossing pair partitions. In the present case, these partitions are not enough to describe $\mu$ and we need a much bigger set of partitions. This will be discussed in details later in Section \ref{proofs-smatrix}.
\end{remark}

\begin{remark}\label{p=n} It is known that if $Y$ follows the $MP_1$ law and $Y^{\prime}$ follows the semi-circle law, then $Y \overset{\mathcal{D}}{=} Y^{\prime 2}$. A similar result holds for $\mu$. 
Suppose $p/n \to 1$, and the entries of $X$ satisfy Assumption A and \eqref{truncation-ineq}. Then the ESD of $S$ converges a.s. to a probability distribution $\mu$ as given in Theorem \ref{res:XXt}. At the same time, consider the (generalised) Wigner matrix (i.e., a symmetric matrix) with independent entries $\{x_{ij,n}; 1\leq i \leq j \leq n\}$ that satisfy the conditions of Theorem 2.1 in \citep{bose2022random}. Then its ESD converges a.s. surely to a symmetric probability measure $\mu^{\prime}$. 
The two measures $\mu$ and $\mu^{\prime}$ are connected. 
Suppose $Y$ and $Y^{\prime}$ are two random variables such that $Y \sim \mu$ and $Y^{\prime} \sim \mu^{\prime}$. If $\{g_{2k}\}_{k\geq 1}$ are symmetric functions, then, $Y \overset{\mathcal{D}}{=} Y^{\prime 2}$. This is proved in Section    \ref{proof of theorem and remark}.
\end{remark}


Now, we shall consider the matrices $S_{A}$, where the entries of $A_p$ are constructed from the sequence of random variables $\{x_{i,n}; -(n+p) \leq i \leq (n+p)\}$. We will denote $A_p$ by $A$ and write $x_i$ for  $x_{i,n}$. Recall that as $p,n \rightarrow \infty$, $p/n \rightarrow y \in (0,\infty)$.

\vspace{0.2cm}
\noindent \textbf{Assumption B.} 
Suppose there exists a sequence $\{t_n\}$ with $t_n\in [0,\infty]$ such that 
\begin{enumerate}
\item [(i)] for each $k \in \mathbb{N}$,
\begin{align}
 & n \ \mathbb{E}\left[x_{i}^{2k}\boldsymbol {1}_{\{|x_{i}|\leq t_n\}}\right]= f_{2k,n}\Big(\frac{i}{n}\Big) 
 \ \ \ \text{for } \ -(n+p)\leq i \leq n+p, \label{fkeven}\\
& \displaystyle \lim_{n \rightarrow \infty} \ n^{\alpha} \underset{0\leq i  \leq n-1}{\sup} \ \mathbb{E}\left[x_{i}^{2k-1}\boldsymbol {1}_{\{|x_{i}|\leq t_n\}}\right] = 0  \ \ \text{for all } \alpha<1 . \label{fkodd}
\end{align}
where $\{f_{k,n};0\leq k\leq n\}$ is a sequence of bounded and  integrable functions on $[-(1+y),1+y]$. 
\item [(ii)] For each $k \geq 1$, $f_{2k,n}, n \geq 1$ converge uniformly to a function $f_{2k}$ . 
\item[(iii)] Let  $M_{2k}=\|f_{2k}\|$ (where $\|\cdot\|$ denotes the sup norm) and $M_{2k-1}=0$ for all $k \geq 1$. Suppose $\alpha_{k}=  \sum_{\sigma \in \mathcal{P}(2k)} M_{\sigma}$  satisfy \textit{Carleman's condition}, 
$$ \displaystyle \sum_{k=1}^{\infty} \alpha_{2k}^{-\frac{1}{2k}}= \infty.$$
\end{enumerate}
As we will see in Section \ref{application-AAt}, these assumptions are naturally satisfied by various well-known models.
Now we state our second result.

 \begin{theorem}\label{res:main}
 Suppose $A$ is one of the eight rectangular matrices $T^{(s)},T, H^{(s)},H, R^{(s)},R,C^{(s)},C$ described in Section \ref{introduction}, with entries $\{x_{i}\}$ which are independent and satisfy Assumption B. Let $Z_A$ be the $p\times n$ (truncated) matrix with entries $y_l=x_l\boldsymbol {1}_{\{|x_{l}|\leq t_n\}}$. Then the EESD of $S_{Z_A}=Z_AZ_A^T$ converges weakly to a probability measure $\mu_{A}$ say,  whose moment sequence is determined by the functions $f_{2k}, \ k \geq 1$, in each of the eight cases. Further if 
\begin{align}\label{S-truncationtoe-XXt}
 \sum_{l} \mathbb{E}[ x_{l}^2\boldsymbol {1}_{\{|x_{l}| > t_n\}}] \rightarrow 0, 
\end{align}
then the EESD of $S_{A}=AA^{T}$ converges weakly to $\mu_{A}$.
 \end{theorem}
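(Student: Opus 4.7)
The plan is to use the method of moments, adapting the approach behind Theorem \ref{res:XXt} to the patterned structure of $A$ via its link function $L_A \colon [p]\times[n] \to \mathbb{Z}$. First I would reduce the problem from $S_A$ to $S_{Z_A}$ using the truncation assumption \eqref{S-truncationtoe-XXt}. Since each variable $x_l$ appears at most $O(\min(p,n))$ times as an entry of $A$, a standard Hoffman--Wielandt type bound on the Lévy distance between the ESDs, combined with \eqref{S-truncationtoe-XXt}, gives $\mathbb{E}L(F^{S_A}, F^{S_{Z_A}}) \to 0$. So it suffices to analyse the EESD of $S_{Z_A}$.

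Next I would expand
\[
\mathbb{E}\Bigl[\tfrac{1}{p}\Tr\bigl((Z_A Z_A^T)^k\bigr)\Bigr] \;=\; \tfrac{1}{p}\sum_{\pi} \mathbb{E}\!\left[\prod_{s=1}^{2k} y_{L_A(\pi(s),\pi(s+1))}\right],
\]
where $\pi$ ranges over closed walks of length $2k$ alternating between $[p]$ and $[n]$. Group the walks by the partition $\sigma \in \mathcal{P}(2k)$ that identifies the edges carrying equal $L_A$-value. For a block $V \in \sigma$, Assumption B gives $\mathbb{E}[y_*^{|V|}] = n^{-1} f_{|V|,n}(\cdot)$ if $|V|$ is even, and is $o(n^{-\alpha})$ for every $\alpha<1$ if $|V|$ is odd. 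Hence partitions with odd blocks are asymptotically negligible, and only partitions with all even blocks contribute at leading order.

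For each such $\sigma$, counting the number of walks $\pi$ compatible with both $\sigma$ and the link constraints becomes, after rescaling, a Riemann sum. Following the combinatorial template of \citep{bose2010limiting} and \citep{bose2021some}, I would show that only a specific subclass $\mathcal{S}_A(2k)$ of partitions yields index counts of the critical order $n^{k+1}$, with the rest contributing $o(n^{k+1})$. For $\sigma \in \mathcal{S}_A(2k)$, the rescaled count converges to an iterated Lebesgue integral of $f_\sigma := \prod_{V\in\sigma} f_{|V|}$ over a solution manifold $\mathcal{M}_{A,\sigma} \subset [-1-y,1+y]^{|\sigma|}$ (or $[0,1]^{|\sigma|}$ for circulant-type patterns) determined by the affine constraints encoded by $L_A$. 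Summing gives
\[
\lim_{n\to\infty} \mathbb{E}\Bigl[\tfrac{1}{p}\Tr\bigl((Z_A Z_A^T)^k\bigr)\Bigr] \;=\; \sum_{\sigma \in \mathcal{S}_A(2k)} \int_{\mathcal{M}_{A,\sigma}} f_\sigma(\mathbf{u})\, d\mathbf{u} \;=:\; m_{k,A}.
\]
Uniform convergence $f_{2k,n}\to f_{2k}$ together with boundedness (Assumption B(ii)) justifies passage from discrete sums to integrals. Finally, the bound $m_{k,A} \le \alpha_{2k}$ together with Carleman's condition (Assumption B(iii)) identifies $\{m_{k,A}\}$ as the moments of a unique probability measure $\mu_A$, yielding the claimed weak convergence of the EESD.

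The main obstacle will be the link-function-by-link-function combinatorial bookkeeping: identifying $\mathcal{S}_A(2k)$ and the integration manifold $\mathcal{M}_{A,\sigma}$ separately for each of the eight patterns. The asymmetric versions $T,H,R,C$ impose different ``edge-equivalence'' rules from their symmetric counterparts on the diagonal block $[\min(p,n)]^2$, producing slightly different admissible partitions; and the circulant and reverse-circulant modular link functions require extra care in identifying the effective integration domain. Conceptually this step is the $f_{2k}$-generalisation of the counting already performed in \citep{bose2010limiting, bose2021some}, with the new content being the replacement of the constants $\beta_{|V|}$ by the non-constant profiles $f_{|V|}(\cdot)$, which converts limiting index-counts into Riemann integrals.
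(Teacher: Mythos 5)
Your proposal follows essentially the same route as the paper: reduction to the truncated matrix via a L\'{e}vy-distance/Hoffman--Wielandt bound, trace-moment expansion organised by circuits and the partition of equal $L_A$-values, elimination of odd-block partitions via Assumption B together with a deficient vertex count, link-function-specific identification of the contributing partition class and of the limiting Riemann integrals of products of the $f_{2k}$'s, and finally Carleman's condition for uniqueness. The paper additionally inserts an explicit centering step before the word analysis and works out the eight counting lemmas (including sign multiplicities and the $\lfloor y\rfloor$ corrections for the modular link functions) that you correctly flag as the main remaining work, so the two arguments coincide in substance.
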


\begin{remark}\label{notas} As mentioned earlier, the a.s. or in probability convergence of the ESD to the limit $\mu_A$ does not hold in general. This is clear from the simulations given in Figures \ref{fig:SA-1} and \ref{fig:SA-2}. In particular there is no a.s. convergence in the sparse case. A similar phenomenon occurs  for the sparse symmetric patterned matrices (see \citep{banerjee2017patterned}). Of course, a.s. convergence can hold in special cases, for example in the fully i.i.d. case.
\end{remark}

\section{Simulations}\label{simulations}
The LSDs cannot be universal and a variety limit distributions are possible. 
In Figure \ref{fig:S1}, we see the diversity of the LSDs for the $S$ matrix. 
Moreover, even though $\mu_S$ converges a.s. to $\mu$, as noted in Remark \ref{notas}, $\mu_{S_{A}}$ does not  converge a.s. to $\mu_A$ in general. The reason is that, unlike $X$, where each entry appears exactly once,  there is a significant structural dependence among the entries of $A$ and each entry appears $\mathcal{O}(n)$ times. 
However, the a.s. convergence can hold in special cases. 
For instance when the entries of $A$ are $\frac{x_i}{\sqrt{n}}$ with $x_i$s i.i.d. with mean zero finite variance, then it is well-known that the  ESDs of $S_{A}$ do converge a.s. to non-random probability measures (see \citep{bose2010limiting}). Figure \ref{fig:SA-1} and \ref{fig:SA-2} illustrates that a.s. convergence of $\mu_{S_{A}}$ holds  when the entries of $A$ are $N(0,1)/\sqrt{n}$ and fails when the entries of $A$ are $Ber(3/n)$ with for every fixed n.
 Since the entries of $A$ need to be only independent, matrices with variance profile serve as natural examples in demonstrating the diversity of the limit distributions. In Figure \ref{fig:S3} 
we give some simulated $\mu_{S_{A}}$ when $A$ obey a variance profile. 

\section{Details for the $S$ matrix}\label{s-matrix} 
We begin with some preliminaries that are required in the proofs. Then we discuss how Theorem \ref{res:XXt} is applicable when specific features such as i.i.d, heavy-tails,  triangular (size dependent entries), sparsity, variance profile are there in the model. This is followed by a detailed proof of Theorem \ref{res:XXt}. Finally we connect our moment formula for the LSD with the moment formulae known in the literature that are based on hypergraphs and words. 

\subsection{Preliminaries}\label{preliminaries-smatrix}
 \noindent We first briefly introduce the language of link functions, circuits, words etc. in the context of the $S$ matrix that we shall use heavily. For more details of these concepts, please refer to Section 3 of \citep{bose2022random} and Section 4 in \citep{bose2021some}. 
\vskip3pt

\noindent \textbf{Link function}:  The link function for $S$ is given by a pair of functions as follows. 
\begin{align*}
L1(i,j)=(i,j) \ \ \text{ and } \ \ L2(i,j)=(j,i).
\end{align*}

\vskip3pt

\noindent \textbf{Circuits and Words}: In case of the $S$ matrix, a \textbf{circuit} $\pi$ is a function $\pi : \{0,1,2,\ldots, 2m\}\rightarrow \{1,2,3,\ldots,\max(p,n)\}$ with $\pi(0)=\pi(2m)$ and $1 \leq \pi(2i)\leq p, 1 \leq \pi(2i-1)\leq n$ for $1 \leq i \leq m$. We say that the \textit{length} of  $\pi$ is $2m$ and denote it by $\ell(\pi)$. Next, let
\begin{align*}
\xi_{\pi}(2i-1) &= L1(\pi(2i-2),\pi(2i-1)), 1 \leq i \leq k\\
\xi_{\pi}(2i) &= L2(\pi(2i-1), \pi(2i)),1 \leq i \leq k.
\end{align*}
Then,
\begin{align}\label{genmoment-XXt}
\mathbb{E}\big[\Tr(S^{k})\big]& =\mathbb{E}\big[\Tr(XX^*)^{k}\big]\nonumber \\
 & =\sum_{\pi:\ell(\pi)=k}x_{L1(\pi(0),\pi(1))}x_{L2(\pi(1),\pi(2))}\cdots x_{L2(\pi(2k-1),\pi(2k))} = \sum_{\pi:\ell(\pi)=2k}\mathbb{E}[Y_{\pi}],
\end{align}
where $Y_{\pi}= \displaystyle \prod_{i=1}^k y_{\xi_{\pi(2i-1)}}y_{\xi_{\pi(2i)}}$.\\
From \eqref{genmoment-XXt}, observe that the $k$th moment of an entry of $S$ involves the $2k$th moment of the entries of $X$.  Hence the circuits that are required to deal with the $k$th moment of the $S$ matrix are of length $2k$.

For any $\pi$, the values $Lt(\pi(i-1),\pi(i)),t=1,2$ will be called \textit{edges} or $L$-values. When an edge appears more than once in a circuit $\pi$, then it is called 
\textit{matched}. Any $m$ circuits $\pi_1,\pi_2,\ldots,\pi_m$ are said to be \textit{jointly-matched} if each edge occurs at least twice across all circuits. They are said to be \textit{cross-matched} if each circuit has an edge which occurs in at least one of the other circuits. 
Circuits $\pi_1$ and $\pi_2$ are said to be \textit{equivalent} if
\begin{align*}
Lt(\pi_1(i-1),\pi_1(i))=Lt(\pi_1(j-1),\pi_1(j))
\Longleftrightarrow Lt(\pi_2(i-1),\pi_2(i))=Lt(\pi_2(j-1),\pi_2(j)), t=1,2.
\end{align*} 

The above is an equivalence relation on $\{\pi:\ell(\pi)=k\}$. Any equivalence class  of circuits can be indexed by an element of $\mathcal{P}(k)$. The positions where the edges match are identified by each block of a partition of $[k]$. 
Also, an element of $\mathcal{P}(k)$ can be identified with a \textit{word} of length $k$ of letters.  Given a partition, we represent the integers of the same partition block by the same letter, and the first occurrence of each letter is in alphabetical order and vice versa. For example, the partition $\{\{1,5\}, \{2,3,4\}\}$ of $[5]$ corresponds to the word $abbba$. On the other hand, the word $abcccaa$ represents the partition $\{\{1,6,7\},\{2\},\{3,4,5\}\}$  of $[7]$. A typical word will be denoted by $\boldsymbol{\omega}$ and its $i$-th letter as $\boldsymbol{\omega}[i]$. 

\noindent \textbf{The class $\Pi_S(\boldsymbol {\omega})$}:  For a given word $\boldsymbol {\omega}$, this is the set of all circuits which correspond to $\boldsymbol {\omega}$.
For any word $\boldsymbol {\omega}$,  $\boldsymbol {\omega}[i]=\boldsymbol {\omega}[j] \ \Leftrightarrow \xi_{\pi}(i)= \xi_{\pi}(j)\}$. This  implies 
\begin{align*}
Lt(\pi(i-1),\pi(i))& = Lt(\pi(j-1),\pi(j)) \ \ \ \text{ if } i  \ \text{ and } j \ \text{ are of same parity}, \ t=1,2 \\
Lt(\pi(i-1),\pi(i))& = Lt^{\prime}(\pi(j-1),\pi(j)) \ \ \ \text{ if } i  \ \text{ and } j \ \text{ are of different parity}, \ t,t^{\prime}\in \{1,2\}, t\neq t^{\prime}. 
\end{align*}
Therefore the class $\Pi_S(\boldsymbol {\omega})$ is given as follows:
\begin{align}\label{S-link}
\Pi_S(\boldsymbol {\omega}) & = \{\pi; \boldsymbol {\omega}[i]=\boldsymbol {\omega}[j] \ \Leftrightarrow \xi_{\pi}(i)= \xi_{\pi}(j)\} \nonumber \\
& = \Big\{\pi: \boldsymbol {\omega}[i]=\boldsymbol {\omega}[j] \Leftrightarrow (\pi(i-1),\pi(i))= (\pi(j-1),\pi(j))\ \ \text{ or } (\pi(i-1),\pi(i))= (\pi(j),\pi(j-1))\Big\}.
\end{align}

From \eqref{genmoment-XXt} observe that,
\begin{align}
\lim_{p \rightarrow \infty}\frac{1}{n}\mathbb{E}[\Tr(S^{k}] =\displaystyle\lim_{n \rightarrow \infty}\frac{1}{p} \sum_{\pi:\ell(\pi)=2k}\mathbb{E}[Y_{\pi}] 
 = \displaystyle\lim_{p \rightarrow \infty} \displaystyle \sum_{b=1}^k  \sum_{\underset{\text{with b distinct letters}}{\omega \ \text{matched of length }2k}}\frac{1}{p} \sum_{\pi \in \Pi_{S}(\boldsymbol{\omega})} \mathbb{E}(Y_{\pi}).    
\end{align}
\vskip5pt
Note that all words that appear above are of length $2k$. For every $k \geq 1$, the words of length $2k$ corresponding to the circuits of $S$ and generalised Wigner matrix, $W$, are related (see Observation 1 below). We will find a connection between the $k$th moments of the LSD of $S$ and $2k$th moments of the LSD of the generalised Wigner matrix. We will also discover that the partitions that contribute to the latter plays a crucial role in the former.

Let $L_{W}(i,j)=(\min(i,j),\max(i,j))$ denote the link function of the Wigner matrix. For words corresponding to $L_W$, the class $\Pi_W(\boldsymbol {\omega})$ is given by 
\begin{align}\label{Wignerlink}
\Pi_W(\boldsymbol {\omega}) &= \Big\{\pi: \boldsymbol {\omega}[i]=\boldsymbol {\omega}[j] \Leftrightarrow L_W(\pi(i-1),\pi(i))= L_W(\pi(j-1),\pi(j))\Big\} \nonumber \\
&= \Big\{\pi: \boldsymbol {\omega}[i]=\boldsymbol {\omega}[j]  \Leftrightarrow (\pi(i-1),\pi(i))= (\pi(j-1),\pi(j))
 \text{ or } (\pi(i-1),\pi(i))= (\pi(j),\pi(j-1))\Big\}.
\end{align}
Next, we make a key observation about $\Pi_S(\boldsymbol {\omega})$ and $\Pi_W(\boldsymbol {\omega})$. 
\vskip5pt
\noindent \textbf{Observation 1}: Let $\tilde{\Pi}_W(\boldsymbol {\omega})$ be the possibly larger class of the circuits for the Wigner Link function with range $1 \leq \pi(i) \leq \max(p,n), 0 \leq i \leq 2k$. Then for a word $\boldsymbol{\omega}$ of length $2k$, 
\begin{equation}\label{W-S}
\Pi_S(\boldsymbol {\omega})\subset \tilde{\Pi}_W(\boldsymbol {\omega}). 
\end{equation}

Now we recall the definition of special symmetric words from \citep{bose2022random}. Towards that, we first define the following:\\

\noindent \textbf{Pure block of a word}: Any string of length $m (m > 1)$ of same letter in a word will be called a pure block of size $m$.
For example, in the word $aabcddddeeeb$, $a$, $d$ and $e$ appear in pure blocks of sizes 2, 4 and 3 respectively.\\

\noindent\textbf{Special Symmetric word}
 A word $\boldsymbol {\omega}$ is \textit{special symmetric} if the following conditions hold: \vskip3pt

\noindent (a) the last new letter appears in pure blocks of even sizes, \vskip3pt

\noindent (b) between two successive appearances of any letter:\vskip3pt

\noindent (b)(i) each of the other letters appears an even number of times, and  \vskip3pt

\noindent (b)(ii) each of the other letters appears an equal number of times in the odd and even positions. \\

For example, $abbccaabba$ is a special symmetric word of length 10 with 3 distinct letters.\\

Condition (b)(i) actually implies both 
Conditions (a) and (b)(ii). 
So the special symmetric words could as well be defined as those words which satisfy Condition (b)(i).
This fact was observed in \citep{pernici2021noncrossing}. We retain all three conditions in the definition 
for clarity and ease of use. 
\vskip5pt

We denote the set of all special symmetric partitions of $[k]$ by $SS(k)$, and its subset where each partition has $b$ distinct blocks by $SS_b(k)$. Clearly $SS(k)=\emptyset$ when $k$ is odd. Let $\mathcal{P}_{2}(2k)$ and $NC_2(2k)$ be respectively the set of pair-partitions and non-crossing pair-partitions of $[2k]$.
Then it is easy to check that 
$$\left(SS(2k)\cap \mathcal{P}_{2}(2k)\right)=NC_2(2k) \subset SS(2k).$$ 
Next, recall the definition of \textit{generating and non-generating vertices} from \citep{bose2022random} and \citep{bose2021some}.
\begin{definition} 
If $\pi$ is a circuit then any $\pi(i)$  will be called a \textit{vertex}. This vertex is \textit{generating} if $i=0$ or $\boldsymbol {\omega}[i]$ is the first occurrence of a letter in the  word $\boldsymbol {\omega}$ corresponding to $\pi$. All other vertices are \textit{non-generating}.
\end{definition} 

For example, for the word $abbc$,  $\pi(0), \pi(1), \pi(2)$ and $\pi(4)=\pi(0)$  are generating.
For the word $aaaa$, $\pi(0)$ and $\pi(1)$ are generating. 
 \vskip5pt
 
\noindent \textbf{Even and odd generating vertices:} 
 A generating vertex $\pi(i)$ is called even (odd) if $i$ is even (odd). Any word has at least one of each, namely $\pi(0)$ and $\pi(1)$. So for a matched word with $b(\leq k/2)$ distinct letters there can be $(r+1)$ even generating vertices where $0 \leq r \leq b-1$.

Observe that  
\begin{align}\label{S-Pi(omega)}
\big|\Pi_S(\boldsymbol {\omega})\big| = \big|\big\{ &\big(\pi(0), \pi(1),\ldots,\pi(2k)\big): 1\leq \pi(2i)\leq p, 1 \leq \pi(2i-1)\leq n \text{ for } i=0,1,\ldots,k,  \nonumber\\
 &\pi(0)=\pi(2k), \quad \xi_{\pi}(i)=\xi_{\pi}(j) \text{ if and only if }\boldsymbol {\omega}[i]=\boldsymbol {\omega}[j]  \big\}\big|.
\end{align}

Circuits corresponding to a word $\boldsymbol {\omega}$ are completely determined by the generating vertices. 
\ $\pi(0)$ is always generating, and there is one generating vertex for each new letter in $\boldsymbol{\omega}$. So, if $\boldsymbol {\omega}$ has $b$ distinct letters then it has $(b+1)$ generating vertices. Hence the growth of $|\Pi_S(\boldsymbol {\omega})|$ is determined by the number of generating vertices that can be chosen freely. For some words, depending on the link function and the nature of the word, some of these vertices may not have a free choice, that is some of the generating vertices might be a linear combination of the other generating vertices. In any case, as $p/n\rightarrow y>0$,
\begin{equation}\label{cardiality of word-S}
| \Pi_S(\boldsymbol {\omega})| = \mathcal{O}(p^{r+1}n^{b-r})\ \ \text{whenever} \ \omega \text{ has } \\   b \ \ \text{distinct letters and }(r+1) \ \ \text{ even generating vertices.}
\end{equation} 
The existence of 
\begin{align}\label{word limit}
\lim_{p,n \rightarrow \infty} \frac{| \Pi_S(\boldsymbol {\omega})|}{p^{r+1}n^{b-r}} 
\end{align}
for every word $\boldsymbol {\omega}$ is tied very intimately to the LSD of $S$. To see this observe that if the variables are centered (see \eqref{genmoment-XXt}), 
 \begin{align}\label{momentnoniid-XXt}
\lim_{p \rightarrow \infty}\frac{1}{n}\mathbb{E}[\Tr(S_{Z})^{k}]& =\displaystyle\lim_{n \rightarrow \infty}\frac{1}{p} \sum_{\pi:\ell(\pi)=2k}\mathbb{E}[Y_{\pi}] \nonumber\\
& = \displaystyle\lim_{p \rightarrow \infty} \displaystyle \sum_{b=1}^k  \sum_{\underset{\text{with b distinct letters}}{\omega \ \text{matched}}}\frac{1}{p} \sum_{\pi \in \Pi_{S}(\boldsymbol{\omega})} \mathbb{E}(Y_{\pi})\nonumber\\
 & = \displaystyle\lim_{p \rightarrow \infty} \displaystyle \sum_{b=1}^k \sum_{r=0}^{b} \sum_{\underset{\underset{\text{and }(r+1)\text{ even generating vertices}}{\text{with b distinct letters}}}{\omega \ \text{matched}}}\frac{1}{p} \sum_{\pi \in \Pi_{S}(\boldsymbol{\omega})} \mathbb{E}(Y_{\pi}).    
\end{align}
As the entries $x_{ij}$ are independent, $\mathbb{E}[Y_{\pi}]= \prod_{l=1}^b \mathbb{E}[x_{(\pi(l-1),\pi(l))}]$, where $(\pi(l-1),\pi(l))$ denotes the distinct $L-$values corresponding to each distinct letter in $\boldsymbol{\omega}$. Now from \eqref{cardiality of word-S}, it can be seen that $\lim_{p,n \rightarrow \infty} \frac{| \Pi_S(\boldsymbol {\omega})|}{p^{r+1}n^{b-r}}$ determines whether or not all generating vertices of $\boldsymbol{\omega}$ have free choice. Therefore, from \eqref{momentnoniid-XXt}, it is easy to see that $\lim_{n \rightarrow \infty} \frac{| \Pi_S(\boldsymbol {\omega})|}{p^{r+1}n^{b-r}}$ will determine when $\boldsymbol{\omega}$ can have a positive contribution to the limiting moments.

  In the next section, we identify for which words the above limit is positive for $S$.
  \vskip3pt

We shall use the the \textit{$L\acute{e}vy$ metric}.
Let $F$ and $G$ be two distribution functions. Then the \textit{$L\acute{e}vy$ distance} between $F$ and $G$ is given by 
$$L(F,G)= \inf\{ \epsilon : F(x-\epsilon) - \epsilon \leq G(x) \leq F(x+\epsilon) + \epsilon\}.$$
It is well-known that if $\{\mu_n\}$ and $\mu$ are probability measures, then $L(\mu_n,\mu) \rightarrow 0$ as $n \rightarrow \infty$, implies $\mu_n$ converges  to $\mu$.

The next lemma is a well-known result that is useful in the proof of Theorem \ref{res:XXt}. For a proof see Corollary A.42 in \citep{bai2010spectral}.
\begin{lemma}\label{lem:metric}
Suppose $A$ and $B$ are real $p \times n$ matrices and $F^{S_A}$ and $F^{S_B}$ denote the ESDs of $AA^T$ and $BB^T$ respectively. Then the \textit{$L\acute{e}vy$ distance}, $L$ between the distributions $F^{S_A}$ and $F^{S_B}$ satisfy the following inequality: 
\begin{equation}\label{levy}
L^4(F^A,F^B)\leq \frac{2}{p^2}(\Tr(AA^T+BB^T))(\Tr[(A-B)(A-B)^T]).
\end{equation}
\end{lemma}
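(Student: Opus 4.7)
The plan is to chain together four standard inequalities: a bound of the L\'evy distance by the $1$-Wasserstein distance, the explicit formula for $W_{1}$ on empirical measures in terms of sorted eigenvalue gaps, a matrix Cauchy--Schwarz estimate, and the Hoffman--Wielandt (Mirsky) inequality for singular values. Each individual step is short; the interest lies in how they combine to give exactly the factor $\frac{2}{p^{2}}$ on the right-hand side of \eqref{levy}.

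First I would establish the elementary inequality
\begin{equation*}
L(F,G)^{2}\ \le\ W_{1}(F,G)\ :=\ \int_{\mathbb{R}}|F(x)-G(x)|\,dx.
\end{equation*}
If $L(F,G)=\epsilon$, then by the definition of L\'evy distance there is some $x_{0}$ with, say, $G(x_{0})-F(x_{0}+\epsilon)>\epsilon$; by monotonicity of the two distribution functions, $|F(y)-G(y)|>\epsilon$ throughout the interval $[x_{0},x_{0}+\epsilon]$, which forces $W_{1}\ge \epsilon^{2}$. Specializing to the empirical spectral distributions of $S_{A}$ and $S_{B}$ and using the classical identity
\begin{equation*}
W_{1}(F^{S_{A}},F^{S_{B}})=\frac{1}{p}\sum_{i=1}^{p}\bigl|\lambda_{i}^{S_{A}}-\lambda_{i}^{S_{B}}\bigr|,
\end{equation*}
in which the eigenvalues are written in increasing order, I obtain
\begin{equation*}
L(F^{S_{A}},F^{S_{B}})^{4}\ \le\ W_{1}^{2}\ =\ \frac{1}{p^{2}}\Bigl(\sum_{i=1}^{p}|\lambda_{i}^{S_{A}}-\lambda_{i}^{S_{B}}|\Bigr)^{2}.
\end{equation*}

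Next I would rewrite each eigenvalue gap in terms of singular values of $A$ and $B$. Since $\lambda_{i}^{S_{A}}=\sigma_{i}(A)^{2}$ and $\lambda_{i}^{S_{B}}=\sigma_{i}(B)^{2}$, the factorization $|\lambda_{i}^{S_{A}}-\lambda_{i}^{S_{B}}|=|\sigma_{i}(A)-\sigma_{i}(B)|\,(\sigma_{i}(A)+\sigma_{i}(B))$ together with Cauchy--Schwarz yields
\begin{equation*}
\Bigl(\sum_{i}|\sigma_{i}(A)-\sigma_{i}(B)|(\sigma_{i}(A)+\sigma_{i}(B))\Bigr)^{2}
\le
\Bigl(\sum_{i}(\sigma_{i}(A)-\sigma_{i}(B))^{2}\Bigr)\Bigl(\sum_{i}(\sigma_{i}(A)+\sigma_{i}(B))^{2}\Bigr).
\end{equation*}
For the first factor on the right I invoke the Hoffman--Wielandt inequality for singular values, $\sum_{i}(\sigma_{i}(A)-\sigma_{i}(B))^{2}\le \Tr[(A-B)(A-B)^{T}]$; for the second, the elementary bound $(a+b)^{2}\le 2(a^{2}+b^{2})$ gives $\sum_{i}(\sigma_{i}(A)+\sigma_{i}(B))^{2}\le 2\,\Tr(AA^{T}+BB^{T})$. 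Substituting these into the previous display produces exactly \eqref{levy}.

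No single step presents a serious obstacle; the only mildly non-standard piece is the bound $L^{2}\le W_{1}$, and it is in fact the source of the crucial $1/p^{2}$ factor, because $W_{1}$ for an empirical measure already carries a factor $1/p$, whose square matches the $1/p^{2}$ on the right-hand side of \eqref{levy}.
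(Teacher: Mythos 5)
Your proof is correct and follows essentially the same route the paper takes: the paper cites Corollary A.42 of \citep{bai2010spectral} for this lemma, and its own proof of the EESD analogue (Lemma \ref{lem:Emetric}) uses exactly your decomposition --- bound $L^2$ by $\frac{1}{p}\sum_i|\lambda_i^2-\delta_i^2|$, factor the difference of squares, apply Cauchy--Schwarz, then the singular-value Hoffman--Wielandt bound and $(a+b)^2\le 2(a^2+b^2)$. The only cosmetic difference is that you derive the first step from $L^2\le W_1$ and the sorted-eigenvalue formula for $W_1$ rather than citing Theorem A.38 with $\alpha=1$ (and, strictly, one should run your L\'evy-distance argument with $\epsilon'<\epsilon$ and let $\epsilon'\uparrow\epsilon$, since the infimum defining $L$ is attained); this is a standard and harmless substitution.
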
	

\noindent Next, we state the following elementary result that helps us conclude the a.s. convergence of the ESD of matrices. 
See Section 1.2 of \citep{bose2018patterned} for a proof.
\begin{lemma}\label{lem:genmoment}
Suppose $A_n$ is any sequence of symmetric $n \times n$ random matrices such that the following conditions hold:
\begin{enumerate}
\item[(i)] For every $k\geq 1$, $\frac{1}{n}\mathbb{E}[\Tr (A_n)^k] \rightarrow \alpha_k$ as $n \rightarrow \infty$.
\item[(ii)] $\displaystyle \sum_{n=1}^{\infty}\frac{1}{n^4}\mathbb{E}[\Tr(A_n^k) \ - \ \mathbb{E}(\Tr(A_n^k))]^4  < \infty$ for every $k \geq 1$.
\item[(iii)] The sequence $\{\alpha_k\}$ is the moment sequence of a unique probability measure $\mu$.
\end{enumerate}
Then $\mu_{A_n}$ converges to $\mu$
weakly a.s.
\end{lemma}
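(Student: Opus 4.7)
The plan is to run a classical method-of-moments argument in three steps: (a) for each fixed $k$, combine condition~(i) with the fourth-moment summability in~(ii) to obtain almost sure convergence of the $k$-th empirical moment to $\alpha_k$; (b) intersect these a.s.\ events over the countably many values of $k$ to get a single full-probability event on which every empirical moment converges; (c) invoke the Frechet--Shohat theorem, leveraging the uniqueness clause (iii), to upgrade pointwise moment convergence to weak convergence of $\mu_{A_n}$.

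For step (a), fix $k \geq 1$ and write $T_n^{(k)} = \tfrac{1}{n}\Tr(A_n^k) = \int x^k\, d\mu_{A_n}(x)$. By (i), $\mathbb{E}[T_n^{(k)}] \to \alpha_k$. By (ii),
\[
\sum_{n=1}^{\infty} \mathbb{E}\bigl[(T_n^{(k)} - \mathbb{E} T_n^{(k)})^4\bigr] \;=\; \sum_{n=1}^{\infty} \frac{1}{n^4}\, \mathbb{E}\bigl[\Tr(A_n^k) - \mathbb{E}\Tr(A_n^k)\bigr]^4 \;<\; \infty,
\]
and since the summands are nonnegative, Tonelli yields $\sum_n (T_n^{(k)} - \mathbb{E} T_n^{(k)})^4 < \infty$ almost surely. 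In particular $T_n^{(k)} - \mathbb{E} T_n^{(k)} \to 0$ a.s., and combining with (i) gives $T_n^{(k)} \to \alpha_k$ a.s. (Alternatively, one could apply Markov's inequality to the fourth moment together with Borel--Cantelli.)

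For steps (b) and (c), taking the intersection of the a.s.\ events above across $k = 1, 2, \dots$ produces a set $\Omega_0$ of full probability on which, for every $\omega \in \Omega_0$ and every $k \geq 1$, the $k$-th moment of $\mu_{A_n}(\omega)$ converges to $\alpha_k$. Condition (iii) says that $\{\alpha_k\}$ is the moment sequence of a unique probability measure $\mu$; by the Frechet--Shohat theorem, convergence of all moments of a sequence of probability measures to the moments of a measure that is \emph{uniquely} determined by its moments implies weak convergence. Applying this pointwise on $\Omega_0$ gives $\mu_{A_n} \Rightarrow \mu$ weakly, a.s.

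The argument is essentially mechanical; the only subtle points are purely conceptual. First, the uniqueness in (iii) (typically ensured in the present paper via Carleman's condition on the $\alpha_k$'s) is not a cosmetic assumption: moment convergence alone does not imply weak convergence if more than one probability measure shares the limit moments. Second, the exponent $4$ in (ii) is the smallest that makes the Borel--Cantelli step go through directly for a.s.\ convergence --- a second-moment bound would only deliver convergence in probability; this is why the hypothesis is stated at the fourth-moment level.
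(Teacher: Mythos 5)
Your proof is correct and is essentially the standard argument that the paper relies on (the paper does not reproduce a proof, citing Section 1.2 of \citep{bose2018patterned} instead): fourth-moment summability plus Tonelli (or Markov and Borel--Cantelli) gives a.s.\ convergence of each empirical moment $\frac{1}{n}\Tr(A_n^k)$ to $\alpha_k$, and the Fr\'echet--Shohat theorem together with the uniqueness hypothesis (iii) upgrades this, on a single full-measure event, to a.s.\ weak convergence. The only imprecision is in your closing aside: a \emph{summable} second-moment bound would also yield a.s.\ convergence by the identical argument, so the exponent $4$ is not forced by Borel--Cantelli per se; rather, in applications the normalized fourth central moment is $\mathcal{O}(n^{-2})$ and hence summable, while the variance is typically only $\mathcal{O}(n^{-1})$ and is not.
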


Condition (i) and (ii) of Lemma \ref{lem:genmoment} will be referred to as the \textit{first moment condition} and the \textit{fourth moment condition}, respectively. 


\subsection{Relation to existing results on the $S$ Matrix}\label{realtion to existing results} 

\subsubsection{ \textbf{I.I.D. entries}}\label{iid}
 Suppose $X=((x_{ij}/\sqrt{n}))$ where $\{x_{ij}\}$ are i.i.d. with distribution  $F$ which has mean zero and variance $1$. 
 It is known that $\mu_S$ converges a.s. to $MP_y$. For a brief history and precursors of this result, see \citep{bose2018patterned} and \citep{bai2010spectral}. Here, we show how this result follows as a special case of Theorem \ref{res:XXt}.
 
 First, let us verify that the conditions of Assumption A are satisfied in this case. Towards that, let $t_n=n^{-1/3}$. 
Using the same line of reasoning as in Section 5.1 (a) in \citep{bose2022random}, it follows that $g_2\equiv 1$ and $g_{2k}\equiv 0, k>1$. Thus $M_{2}=1$, $M_{2k}=0, k \geq 2$ (see (iii) in Assumption A) and $\alpha_k= \sum_{\sigma \in \mathcal{P}(2k)} 1 $ clearly satisfies Carleman's condition. 
Now for any $ t > 0$, 
\begin{align*}
\frac{1}{p}\displaystyle \sum_{i,j} \ \big(x_{ij}/\sqrt{n}\big)^2[\boldsymbol {1}_{[|x_{ij}/\sqrt{n}| > t_n]}]= & \frac{1}{np}\displaystyle \sum_{i,j} \ x_{ij}^2[\boldsymbol {1}_{[|x_{ij}| > t_n \sqrt{n}]}]\\
& \leq \frac{1}{np}\displaystyle \sum_{i,j} \ x_{ij}^2[\boldsymbol {1}_{[|x_{ij}| > t]}] \ \ \text{for all large} \ n,\\
& \overset{a.s.}{\longrightarrow} \ \mathbb{E}\big[ x_{11}^2[\boldsymbol {1}_{[|x_{11}| > t]}]\big].  
\end{align*}
As $\mathbb{E}[x_{11}^2]=1$, taking $t$ to infinity, the above limit is 0 a.s. 
 Hence applying Theorem \ref{res:XXt}, the ESD of $S$ converges a.s. to $\mu$ whose $k-$th moment is given by 
\begin{align}
\beta_k(\mu)&= \displaystyle \sum_{r=0}^{k-1} \sum_{\underset{\underset{\text{ even generating vertices}}{\text{with }(r+1)}}{\pi \in SS_k(2k)}} y^r 
\end{align}
 
\noindent Now the number of pair-matched words of length $2k$ with $r+1$ even generating vertices is shown to be $\frac{1}{r+1}{k \choose r}{{k-1} \choose r}$ in 
Theorem 5(a) of \citep{bose2008another}. Hence the rhs of the above equation reduces to the $k$th moment of the $MP_y$ law. Hence $\mu_S$ converges to the $MP_y$ law a.s. 
\subsubsection{ \textbf{Heavy-tailed entries}} Suppose $\{x_{ij}, 1 \leq i \leq p, 1 \leq j\leq n\}$ are i.i.d. with an $\alpha$-stable distribution ($0<\alpha <2$) and $n/p \rightarrow \gamma \in (0,1]$. 
Let $X=((x_{ij}/a_p))$ where $$a_p= \inf\Big\{u: \mathbb{P}[|x_{ij}|\geq u]\leq \frac{1}{p}\Big\}.$$ The existence of LSD of $S$ using Stieltjes transform, has been proved in \citep{belinschi2009spectral}. Theorem \ref{res:XXt} may be used to  give an alternative proof. We recall that for the Wigner matrix with heavy tailed entries, a proof using truncation and moments is available in \citep{bose2022random}. That proof can be easily adapted here. For a fixed constant $B$, let $X^B=
((\frac{x_{ij}}{a}\boldsymbol {1}_{[|x_{ij}|\leq B a]}))$. Then 
$X^B$ satisfies Assumption A. Hence from Theorem \ref{res:XXt}, the ESD of $S^B= X^B(X^{B})^{T}$, a.s. converges a.s., to say $\mu_B$. 
The rest of the arguments are as in Section 5.2 of \citep{bose2022random}.
 Thus  $\mu_{S}$ converges to $\tilde{\mu}$ in probability and yields the convergence in Theorem 1.10 of \citep{belinschi2009spectral}.

\subsubsection{ \textbf{Triangular i.i.d. (size dependent matrices)}}\label{triangular iid}
 Suppose $\{x_{ij,n}; 1\leq i \leq p,1 \leq j \leq n \}$ is a sequence of i.i.d. random variables with distribution $F_n$ that has finite moments of all orders, for every $n$. Also assume that for every $k \geq 1$,
\begin{align}\label{ck}
n \beta_k(F_n) \rightarrow C_k
\end{align}
where $\beta_k(F_n)$ denotes the $k$th moment of $F_n$. Suppose $\{C_2,C_4, \ldots \}$ is the cumulant sequence of a probability distribution whose moment sequence satisfies Carleman's condition.
\begin{remark}\label{infinte divisible}
The condition \eqref{ck} is equivalent to the statement that $\displaystyle\sum_{i=1}^na_{i,n}$, where $a_{i,n}$ are i.i.d. $F_n$ converges to some limit distribution $F$, whose cumulants are $\{C_k\}$. In particular, if $F$ is infinitely divisible, then the existence of such variables $\{a_{i,n}\}$ are guaranteed. See, p.766 (characterization 1) in \citep{bose2002contemporary}. Also then, $\{C_2,C_4, \ldots\}$ is indeed a cumulant sequence. 
\end{remark} 

Let 
$X=((x_{ij,n}))$
and  
$p/n \rightarrow y>0$.
Condition \eqref{ck} implies that Assumption A holds with $t_n=\infty$ and $g_{2k}\equiv C_{2k}, k \geq 1$. Therefore by Theorem \ref{res:XXt},  
$\mu_S$ converges a.s. to $\mu$ with moments 
\begin{align}\label{Zak-limit}
\beta_{k}(\mu)=\displaystyle \sum_{r=0}^{k-1}\sum_{\underset{\underset{\text{ even generating vertices}}{\text{with }(r+1)}}{\pi \in SS(2k)}} y^r C_{\pi}.
\end{align}

We now show how Theorem 3.2 in \citep{Benaych-Georges2012} and Proposition 3.1 in \citep{noiry2018spectral} can be deduced from 
Theorem \ref{res:XXt}. So, suppose the entries of $X$ are i.i.d. with distribution $\mu_n$ that has mean zero and all moments finite and $$\displaystyle{\lim_{n\to\infty} \frac{\beta_k(\mu_n)}{n^{k/2-1}\beta_2(\mu_n)^{k/2}}}= c_k,  \ \ \text{say,  exists for all} \ \ k\geq 1.$$ 
The LSD of $S=XX^T$ 
when $c_k^{1/k}$ is bounded, was considered in Theorem 3.2 of \citep{Benaych-Georges2012} and Proposition 3.1 of \citep{noiry2018spectral}. Clearly Assumption A is satisfied with $t_n=\infty$, $g_2\equiv 1$ and $g_{2k} \equiv C_{2k}, k \geq 2$. Hence Theorem \ref{res:XXt} can be applied and the resulting LSD, say, $\mu$ has moments as in \eqref{Zak-limit}. In Section \ref{connection-ss2k}, we shall verify that this LSD is the same as those obtained in the above references. 
\vskip3pt

\noindent \textbf{Connection to the limiting moments of the generalised Wigner matrices}: As observed above
suppose the entries are triangular i.i.d. that satisfy \eqref{ck}. Then $g_{2k}\equiv C_{2k}, k \geq 1$.
Then Remark \ref{p=n}, applies and hence $X \overset{\mathcal{D}}{=} Y^2$ where $X \sim \mu$, $Y \sim \mu^{\prime}$ , $\mu$ and $\mu^{\prime}$ are the LSDs of the $S$ matrix and the generalised Wigner matrix respectively. 

\subsubsection{ \textbf{Sparse S}} \label{subsubparse}
A well-studied sparse matrix model is where the entries of $X$ have $Bernoulli$ distribution with parameter $p_n$ such that $n p_n \rightarrow \lambda>0$. 
Thus, 
\eqref{ck} holds with $C_{k}\equiv \lambda$ for all $k\geq 1$. Hence by Theorem \ref{res:XXt} 
$\mu_S$ converges a.s. to $\mu$ whose moments are (see \eqref{Zak-limit}):
\begin{equation}\label{sparse-hom}
\beta_{k}(\mu)= \displaystyle \sum_{r=0}^{k-1}\sum_{\underset{\underset{\text{ even generating vertices}}{\text{with }(r+1)}}{\pi \in SS(2k)}} y^r \lambda^{|\pi|}.
\end{equation}

Explicit description of $\mu$ is not available. 
However, we can say the following.
Let $E(2k)$ (and $NCE(2k)$) be the set of partitions (and non-crossing partitions) whose blocks are all of even sizes. Then it is easliy seen that $NCE(2k) \subset SS(2k) \subset E(2k).$ Therefore we have the following:\\
\vskip3pt

\noindent Case 1: $y \leq 1$. Then from \eqref{sparse-hom}
\begin{equation}\label{y<=1}
\displaystyle \sum_{\pi \in NCE(2k)} (\lambda y)^{|\pi|} < \beta_{k}(\mu) < \sum_{\pi \in E(2k)} \lambda^{|\pi|}
\end{equation}
Case 2: $y > 1$. Then from \eqref{sparse-hom}
\begin{equation}\label{y>1}
\displaystyle \sum_{\pi \in NCE(2k)} y^{|\pi|} < \beta_{k}(\mu) < \sum_{\pi \in E(2k)} (\lambda y)^{|\pi|}
\end{equation}

Now suppose $P_1(\gamma)$ is a free Poisson variable with mean $\gamma$ and $P_2(\gamma)$  is a Poisson variable with mean $\gamma$. Let $Y$ be a random variable which takes value $1$ and $-1$ with probability $\frac{1}{2}$ each. Suppose $Y$ is independent of $P_1(\gamma)$ and $P_2(\gamma)$. Consider $Q_1(\gamma)= P_1(\gamma)Y$ and $Q_2(\gamma)= P_2(\gamma)Y$. Then the moments of $Q_1(\gamma)$ and $Q_2(\gamma)$ are give as follows:
\begin{align}
\mathbb{E}[Q_1^k(\gamma)]&=\begin{cases}
0 \ & \ \text{ if } k \text{ is odd },\\
\displaystyle \sum_{\pi \in NCE(k)} \gamma^{|\pi|} & \ \text{ if } k \text{ is even }. 
\end{cases} \label{Q_1}\\
\mathbb{E}[Q_2^k(\gamma)]& =\begin{cases}
0 \ & \ \text{ if } k \text{ is odd },\\
\displaystyle \sum_{\pi \in E(k)} \gamma^{|\pi|} & \ \text{ if } k \text{ is even }. 
\end{cases}\label{Q_2}
\end{align}

Hence 
\eqref{y<=1} and \eqref{y>1}, can be rewritten as 
\begin{align}
\mathbb{E}[(Q_1(\lambda y))^{2k}]< \beta_k(\mu)< \mathbb{E}[(Q_2(\lambda ))^{2k}] \ \ \text{ for every } k\geq 1, y\leq 1, \label{poisson relation-1}\\
\mathbb{E}[(Q_1(\lambda))^{2k}]< \beta_k(\mu)< \mathbb{E}[(Q_2(\lambda y))^{2k}] \ \ \text{ for every } k\geq 1, y> 1.\label{poisson relation-2}
\end{align} 
Thus the LSD of $S$ lies between the free Poisson and Poisson distributions in the above sense.
\subsubsection{ \textbf{Matrices with a variance profile}}\label{var-prof-smatrix}
The $S$ matrix, where the entries of $X$ are independent but not necessarily identically distributed have been considered in  \citep{yin1986limiting}, \citep{lytova2009central} and \citep{bai2010spectral} where a common theme has been to assume that the entries have equal variances. 
Recent works such as \citep{zhu2020graphon}, \citep{jin2020result} drop this assumption. 
We now show that if $X$ has a suitable variance profile, then $\mu_S$ converges a.s.  
We consider two profiles.

In the first, $X$ has a \textit{discrete variance profile} so that $X=((y_{ij}=\sigma_{ij}x_{ij})); 1\leq i \leq p, 1 \leq j\leq n \}$ where $\{x_{ij}\}$ are i.i.d. random variables and $\{\sigma_{ij}\}$ satisfy certain conditions. 
A similar model for the Wigner matrix was considered 
in 
Result 5.1 of \citep{bose2022random}.
We state a similar result 
for $S$ whose proof uses arguments similar to the proof of Theorem \ref{res:XXt} and Result 5.1 in \citep{bose2022random} and we omit the details. 

\begin{result}
Consider the matrix $X$ with entries $\{\frac{y_{ij}}{\sqrt{n}}= \frac{\sigma_{ij}x_{ij}}{\sqrt{n}} : 1\leq i \leq p, 1\leq j \leq n\}$ that are independent and satisfy the following conditions:
\begin{enumerate}
\item[(i)] $\mathbb{E}x_{ij}=0$ and $\mathbb{E}[x_{ij}^2]= 1$.
\item[(ii)] $\sigma_{ij}$ satisfy the following:
\begin{align}\label{s-variance}
\underset{1 \leq i \leq p}{\sup}\ \displaystyle \bigg|\frac{1}{n}\sum_{j=1}^n {\sigma^2}_{ij} -1\bigg| \rightarrow 0 \ \ \  \text{ as } n \rightarrow \infty.
\end{align}
\item[(iii)] $\displaystyle \lim_{n \rightarrow \infty} \frac{1}{n^2} \displaystyle \sum_{i,j} \mathbb{E}\big[x_{ij}^2]\boldsymbol{1}_{[|x_{ij}|>\eta \sqrt{n}]}\big]=0$ for every $\eta>0$.
\end{enumerate}
Then the ESD of $S$ converges a.s. to the $MP_y$ law, where $0< y= \lim p/n$. 
\end{result}

\begin{remark}
Theorem 1.2 in \citep{jin2020result} states a similar result where \eqref{s-variance} is replaced by $\displaystyle \frac{1}{n}\sum_{i}\bigg|\frac{1}{n}\sum_{j=1}^n {\sigma^2}_{ij} -1\bigg| \rightarrow 0.$ However, the proof equation of (2.6) there is not very clear. 
\end{remark}

Next, we consider a \textit{continuous variance profile}. 
Suppose $X=((y_{ij,n}=\sigma(i/p,j/n)x_{ij,n}))$
where $\{x_{ij,n}\}$ are i.i.d. for every fixed $n$ and satisfy the conditions given in \eqref{ck}, and $\sigma$ is a bounded piecewise continuous function on $[0,1]^2$. 
 Then $\mu_S$ converges a.s. to a symmetric probability measure $\nu$ whose $k$th moment is determined by $\sigma$ and $\{C_{2m}\}_{1\leq m\leq 2k}$.

To see this, note that $\{y_{ij,n}\}$ satisfy Assumption A with $g_{2k}\equiv \sigma^{2k}C_{2k}$. 
 By Theorem \ref{res:XXt}, 
$\mu_S$ converges a.s. to a probability measure $\nu$. 
The expressions for the moments of $\nu$ are quite involved and shall be given in Section \ref{proofs-smatrix}
after the proof of Theorem \ref{res:XXt}. Incidentally, from those expressions, it is evident that the contribution to the moment from distinct special symmetric partitions may be different even when they 
the same number of blocks and block sizes.
\vskip3pt 

Consider the special case $p=n$, with $X= ((y_{ij}= \sigma(i/n,j/n)x_{ij}))$ with $\sigma : [0,1]^2 \longrightarrow [0,1]$, $$\sigma(x,y)= \begin{cases}
1, & \ \  x \leq y,\\
0 & \ \ \text{ otherwise}
\end{cases}$$  and $\{x_{ij}; 1\leq i,j \leq n \}$ are i.i.d. with mean zero and variance 1. 
Then $X_p$ equals  
\begin{align}\label{triangular matrix}
X_p= \begin{bmatrix}
x_{11} & x_{12} & x_{13} & \cdots & x_{1n}\\
0 & x_{22} & x_{23} & \cdots & x_{2n}\\
\ & \ &   \  & \vdots  & \ &   \ \\
0 & 0 & 0 & \cdots & x_{nn} 
\end{bmatrix}.
\end{align}
The spectral distribution of $n^{-1}X_pX_p^{*}$ 
were studied in \citep{Dykema2002DToperatorsAD} 
in the Gaussian case. Later, \citep{basu2012spectral} studied the LSD of this and similar other models 
where the entries are i.i.d. with mean zero and variance 1.
If all moments of $\{x_{ij}\}$ are finite, then the  a.s. convergence of $\mu_S$ is immediate from Theorem \ref{res:XXt}. When only the variance is known to be finite, 
a truncation argument similar to that given in Section \ref{iid}, can be used.
As a consequence, 
$\mu_S$ converges a.s. to a non-random probability measure. 

\subsection{Proofs for the $S$ matrix}\label{proofs-smatrix}
As discussed in Section \ref{preliminaries-smatrix}, the existence of $\lim_{n\to\infty}\frac{| \Pi_S(\boldsymbol {\omega})|}{p^{r+1}n^{b-r}}$ is crucial in finding the LSD of the $S$ matrix. So we look into it first.

%
\begin{lemma}\label{lem:SS2k}
 Suppose $\boldsymbol {\omega}\in SS_b(2k)$ with $(r+1)$ even generating vertices $(0 \leq r \leq k)$. Then, $\big|\Pi_S(\boldsymbol {\omega}) \big| = p^{r+1}n^{b-r}$.\\
\end{lemma}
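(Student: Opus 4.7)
The plan is to establish the equality by proving both inequalities. The upper bound $|\Pi_S(\boldsymbol{\omega})| \le p^{r+1}n^{b-r}$ is immediate from the principle already emphasised in the text: any circuit in $\Pi_S(\boldsymbol{\omega})$ is completely determined by its $b+1$ generating vertices, of which $r+1$ are even-indexed (and therefore lie in $[p]$) and $b-r$ are odd-indexed (lying in $[n]$). The real content of the lemma is the matching lower bound: for a special symmetric word, every choice of values for the generating vertices within the prescribed ranges must extend to an actual circuit in $\Pi_S(\boldsymbol{\omega})$. Two things must be verified: that no generating vertex is secretly forced by the matching constraints to coincide with another (otherwise the count drops below $p^{r+1}n^{b-r}$), and that the closing relation $\pi(2k)=\pi(0)$ holds automatically.

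To carry this out, I would fix the $b+1$ generating vertices arbitrarily within their ranges and define the remaining $\pi(i)$'s by processing positions left-to-right. At a position $i$ at which $\boldsymbol{\omega}[i]$ is a new letter, $\pi(i)$ is generating and already set. Otherwise, let $j < i$ be an earlier position with $\boldsymbol{\omega}[j] = \boldsymbol{\omega}[i]$; by \eqref{S-link}, the matching forces
\[
\pi(i) = \begin{cases} \pi(j) & \text{if $i$ and $j$ have the same parity,} \\ \pi(j-1) & \text{if $i$ and $j$ have different parities,} \end{cases}
\]
and simultaneously imposes the first-coordinate identity $\pi(i-1) = \pi(j-1)$ or $\pi(i-1) = \pi(j)$ respectively. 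Since $\pi(i-1)$ is already assigned by earlier steps, this identity must be automatic; otherwise a hidden constraint would be placed on a previously chosen generating vertex. The cornerstone of the proof is that condition (b)(i) of the special symmetric definition --- between any two successive occurrences of a letter, every other letter appears an even number of times --- forces the chain of substitutions accumulated by $\pi(i-1)$ through earlier matchings to cancel pairwise and reduce exactly to the required value. Conditions (b)(ii) and (a), which follow from (b)(i), then guarantee respectively that the identified vertex indices have matching parities (so the ranges $[p]$ versus $[n]$ are respected) and that the terminal substitution returns the circuit to $\pi(0)$.

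The main obstacle is the cancellation statement attributed to (b)(i). This is a combinatorial claim about the composition of parity-determined transpositions along the word. I would prove it by induction on the number of distinct letters: when a letter $L$ is peeled off, the portion of $\boldsymbol{\omega}$ strictly between two successive occurrences of $L$ is itself a matched subword in which every letter appears an even number of times, and the induction hypothesis identifies the accumulated substitution on this subword as the identity on the generating vertices introduced so far. The subcase needing care is when a letter appears three or more times; consistency across all its occurrences follows by applying the pairwise consistency inductively to adjacent occurrences and composing. This mirrors the treatment of special symmetric words for the generalised Wigner matrix in \citep{bose2022random}, with additional parity bookkeeping required because even- and odd-indexed vertices now range over different sets.
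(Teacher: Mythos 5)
Your upper bound and your identification of the real content of the lemma (that for a special symmetric word every free choice of the $b+1$ generating vertices propagates consistently to a full circuit) are both correct, and reducing the consistency requirement to a statement about the sub-walk between two successive occurrences of a letter is the right way to think about it; note that condition (b)(i) forces that gap to have even length, so successive occurrences have opposite parities and the requirement is always of the closed-excursion form $\pi(j_2-1)=\pi(j_1)$. The gap in your argument is in the induction you propose to close this with. First, the property you actually verify for the gap-subword, namely that every letter appears an even number of times, is strictly weaker than what you need: the word $abab$ has every letter appearing twice, yet its walk does not close freely (matching the two $a$'s forces $\pi(2)=\pi(0)$ and destroys a generating vertex), so an induction hypothesis keyed to evenness alone is false. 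What is true, and what you would have to check, is that the gap-subword inherits condition (b)(i) and is therefore itself special symmetric. Second, and more seriously, even with that upgrade the statement you are inducting on is a count of circuits in which \emph{all} generating vertices are chosen freely, and the gap-subword does not sit in that situation: its letters may also occur outside the gap, in which case their edges are already pinned down and must be re-used rather than re-chosen (e.g.\ in the special symmetric word $abbaabba$, the edge of $b$ is fixed in the gap between the first two $a$'s and is merely re-traversed in the gap between the last two). To make your route work you would need to formulate and prove a stronger closure statement for walks over special symmetric words with partially pre-assigned edges; that statement is the whole difficulty and is not supplied by the lemma for fewer letters.

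The paper's induction avoids this entirely by peeling off the \emph{last} new letter $z$, which by condition (a) occurs only in pure blocks of even size. A pure block $zz\cdots z$ forces the walk to bounce back and forth between $\pi(i-1)$ and the single new generating vertex $\pi(i)$, so each such block is automatically a closed excursion with nothing to cancel; deleting all $z$-blocks (each of even length, hence parity-preserving) leaves a special symmetric word on $b-1$ letters to which the induction hypothesis applies, and the count multiplies by the free range of $\pi(i)$. If you want to keep a left-to-right propagation argument, either adopt this choice of letter to peel or prove the stronger walk-closure lemma directly.
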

\begin{proof}
We argue by induction on $b$, the number of distinct letters.
If $b=1$, then $r=0$ and $\boldsymbol {\omega}=aa\cdots aa$. Therefore $\pi(0)$ and $\pi(1)$ are the generating vertices and both can be chosen freely. Thus, $\big|\Pi_S(\boldsymbol {\omega}) \big|=pn$. 
 Now assume that the result is true upto $b-1$.
Then it is enough to prove that if $\boldsymbol {\omega}$ has $b$ distinct letters with $(r+1),\ (0 \leq r \leq b-1)$ even generating vertices, then $\big|\Pi_S(\boldsymbol {\omega}) \big|=p^{r+1}n^{b-r}$.

First let $0 \leq r \leq b-2$. 
Suppose the last distinct letter of $\boldsymbol {\omega}$, say, $z$ appears for the first time at the $i$th position, that is at $(\pi(i-1),\pi(i))$ or $(\pi(i),\pi(i-1))$ (depending on whether $i$ is odd or even). Then 
$z$ appears in pure even blocks. Let $m$ ($m$ even) be the length of the first pure block.
Then we have the following two cases:
\vskip3pt

\noindent Case 1: $i$ is odd. Then we have
\begin{align}\label{even z}
& \pi(i-1)= \pi(i+1)= \cdots = \pi(i+m-1), \nonumber \\
& \pi(i)= \pi(i+2)= \cdots = \pi(i+m-2).
\end{align}
Similar identities can be shown for all other pure blocks of $z$. Hence $\pi(i)$ can be chosen freely with $1 \leq \pi(i) \leq n$ as it does not appear elsewhere in $\boldsymbol {\omega}$ other than the letter $z$. Let $\boldsymbol {\omega}^{\prime}$ be the word with $(b-1)$ distinct letters and $(r+1)$ even generating vertices, obtained by dropping all $z$s from $\boldsymbol{\omega}$. It is easy to see that  
$\boldsymbol {\omega}^{\prime}$ is a special symmetric word with $(b-1)$ distinct letters. Therefore, by induction hypothesis, $\big|\Pi_S(\boldsymbol {\omega}^{\prime}) \big|=p^{r+1}n^{b-(r+1)}$. Now as $\pi(i)$ is another odd vertex that can be chosen freely, we have $\big|\Pi_S(\boldsymbol {\omega}) \big|=p^{r+1}n^{b-(r+1)}n= p^{r+1}n^{b-r}$.
\vskip3pt

\noindent Case 2: $i$ is even. Then we have
\begin{align*}
& \pi(i-1)= \pi(i+1)= \cdots = \pi(i+m-1),\\
& \pi(i)= \pi(i+2)= \cdots = \pi(i+m-2).
\end{align*}
  As in Case 1, the generating vertex $\pi(i)$ can be chosen freely with $1 \leq \pi(i) \leq p$. As before, dropping all $z$s from $\boldsymbol{\omega}$  leads to a word special symmetric word$\boldsymbol {\omega}^{\prime}$ with $(b-1)$ distinct letters and $r$ even generating vertices. 
	Therefore, by induction hypothesis, $\big|\Pi_S(\boldsymbol {\omega}^{\prime}) \big|=p^{r}n^{b-r}$. Now as $\pi(i)$ is another even vertex that can be chosen freely, we have $\big|\Pi_S(\boldsymbol {\omega}) \big|=p^{r}p n^{b-r}= p^{r+1}n^{b-r}$. 
    \vskip3pt
  
\noindent Now let $r=b-1$. Then there are $r+1=b$ even generating vertices (one of them being $\pi(0)$) and $b$ distinct letters in $\boldsymbol{\omega}$. Therefore all letters except the first appear for the first time at even positions in $\boldsymbol{\omega}$. So, if $z$ is the last distinct letter of $\boldsymbol{\omega}$, then $z$ appears for the first time at $(\pi(i-1),\pi(i))$ where $i$ is even. Thus similar to  Case 1, \eqref{even z} holds and $\pi(i)$ can be chosen freely with $1 \leq \pi(i) \leq n$. If we drop all $z$s as before from $\boldsymbol{\omega}$, then we get a special symmetric word $\boldsymbol {\omega}^{\prime}$ with $(b-1)$ distinct letters and $(b-2)$ even generating vertices. 
Therefore, $\big|\Pi_S(\boldsymbol {\omega}^{\prime}) \big|=p^{b-1}n^{b-(b-1)}$. As $\pi(i)$ is another even vertex that can be chosen freely, we have $\big|\Pi_S(\boldsymbol {\omega}) \big|=p^{b-1}n p= p^{b} n= p^{r+1}n^{b-r}$, $r=b-1$.  
This completes the proof of the lemma.
\end{proof}

\begin{lemma}\label{S-words}
Let $\boldsymbol{\omega}$ be a word with $b$ distinct letters and $(r+1)$ even generating vertices $ (0 \leq r \leq b-1)$. Then
\begin{equation}\label{limit S-words}
\lim_{n \rightarrow \infty}\frac{| \Pi_S(\boldsymbol {\omega})|}{n^{b+1}}= \begin{cases}
y^{r+1}, & \ \  \omega\in SS_b(2k)\\
0, & \ \  \omega \notin  SS_b(2k).
\end{cases}
\end{equation}
Thus, $\lim_{n \rightarrow \infty}\frac{| \Pi_S(\boldsymbol {\omega})|}{p^{r+1}n^{b-r}}= 1$ if and only if $\boldsymbol{\omega}$ is a special symmetric word.
\end{lemma}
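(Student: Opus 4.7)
The plan is to split on whether $\boldsymbol{\omega}$ is special symmetric or not. The $SS$ case is immediate from the preceding lemma, while the non-$SS$ case requires showing that the number of free generating vertices drops strictly below $b+1$, forcing the normalized count to vanish.

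If $\boldsymbol{\omega} \in SS_b(2k)$, Lemma \ref{lem:SS2k} gives the exact count $|\Pi_S(\boldsymbol{\omega})| = p^{r+1} n^{b-r}$. Dividing by $n^{b+1}$ and using $p/n \to y$, I obtain
\begin{equation*}
\frac{|\Pi_S(\boldsymbol{\omega})|}{n^{b+1}} = \left(\frac{p}{n}\right)^{r+1} \longrightarrow y^{r+1},
\end{equation*}
which is the first branch of the piecewise statement. The final ``iff'' sentence is then also immediate: in this regime the ratio $|\Pi_S(\boldsymbol{\omega})|/(p^{r+1}n^{b-r})$ equals $1$ on the nose.

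For the non-$SS$ case, the route I would take is to invoke Observation 1, which gives $\Pi_S(\boldsymbol{\omega}) \subset \tilde{\Pi}_W(\boldsymbol{\omega})$, where $\tilde{\Pi}_W$ is the Wigner circuit class with all indices allowed to range in $[1,\max(p,n)]$. By the corresponding combinatorial fact for the generalised Wigner matrix established in \citep{bose2022random}, $|\tilde{\Pi}_W(\boldsymbol{\omega})| = o(N^{b+1})$ with $N = \max(p,n)$ whenever $\boldsymbol{\omega}$ is not special symmetric, since some generating vertex of $\boldsymbol{\omega}$ is then forced to coincide with another. As $p/n \to y \in (0,\infty)$ we have $N = \Theta(n)$, and hence $|\Pi_S(\boldsymbol{\omega})|/n^{b+1} \to 0$, completing the case analysis and also showing that the normalization $|\Pi_S(\boldsymbol{\omega})|/(p^{r+1}n^{b-r})$ tends to $0$ rather than $1$.

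The main obstacle is the non-$SS$ case. If the Wigner statement could not be quoted directly, I would proceed by induction on $b$, mirroring the inductive structure of Lemma \ref{lem:SS2k}: locate a letter $z$ whose parity pattern violates $SS$ condition (b)(i), and show that the generating vertex associated with the first occurrence of $z$ must coincide with a previously chosen vertex (so that only $O(1)$ choices remain for it rather than $\Theta(n)$ or $\Theta(p)$). The delicate bookkeeping comes from tracking even versus odd positions of generating vertices and from matchings that span non-contiguous occurrences of letters; the reduction to the Wigner count via Observation 1 is attractive precisely because it sidesteps this case analysis.
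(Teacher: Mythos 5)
Your proposal is correct and follows essentially the same route as the paper: the special symmetric case is read off from the exact count in Lemma \ref{lem:SS2k}, and the non-special-symmetric case is handled by the containment $\Pi_S(\boldsymbol{\omega}) \subset \tilde{\Pi}_W(\boldsymbol{\omega})$ of Observation 1 together with the corresponding Wigner counting lemmas of \citep{bose2022random}. The fallback induction you sketch is not needed, as the paper also simply quotes the Wigner result.
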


\begin{proof}
 First suppose $\boldsymbol {\omega} \in \mathcal{P}(2k) \setminus SS_b(2k)$. Then from \eqref{W-S} and Lemma 3.1 and 3.3 in \citep{bose2022random}, it is easy to see that $$\lim_{n \rightarrow \infty}\frac{| \Pi_S(\boldsymbol {\omega})|}{n^{b+1}}=0.$$ If $\boldsymbol{\omega} \in SS_b(2k)$, then from Lemma \ref{lem:SS2k}, it immediately follows that $\lim_{n \rightarrow \infty}\frac{| \Pi_S(\boldsymbol {\omega})|}{n^{b+1}}= y^{r+1}$. This completes the proof of the lemma.
\end{proof}

\begin{lemma}\label{lem:moment}
Let 
\begin{align}\label{def-Q}
Q_{k,4}^b = | \{& (\pi_1,\pi_2,\pi_3,\pi_4): \ell(\pi_i)=2k; \pi_i, 1 \leq i \leq 4 \ \text{jointly- and cross-matched with }\\
 & b \text{ distinct edges or } b \text{ distinct letters across all } (\pi_i)_{1\leq i \leq 4}\}|.
\end{align}
 Then there exists a constant C, such that,
\begin{equation}
Q_{k,4}^b \leq  C n^{b +2}\ .
\end{equation} 
\end{lemma}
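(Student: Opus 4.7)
The plan is to view the quadruple $(\pi_1,\pi_2,\pi_3,\pi_4)$ as four closed walks in a common bipartite graph and to invoke the elementary inequality $|V| \leq |E| + c$, where $c$ counts connected components. I would first build an \emph{ambient graph} $G$ associated with the quadruple: its vertex set $V(G)$ is the collection of distinct values $\{\pi_i(j)\}_{i,j}$, naturally partitioned into $p$-type vertices (those at even positions, with values in $[p]$) and $n$-type vertices (those at odd positions, with values in $[n]$), and its edge set $E(G)$ is the set of $b$ distinct $L$-values, viewed as unordered pairs. Each circuit $\pi_i$ is a closed walk in $G$ and therefore traces a connected subgraph $G_i \subseteq G$.

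Next I would bound the number of connected components $c(G)$ using the cross-matching hypothesis. Consider the auxiliary graph $H$ on the four-element node set $\{1,2,3,4\}$ with $i \sim j$ iff $G_i$ and $G_j$ share at least one edge. Cross-matching forces every node of $H$ to have degree $\geq 1$, so no component of $H$ is a singleton; on four vertices this leaves at most two components. Since two circuits in the same $H$-component share a vertex and hence lie in the same component of $G$, we obtain $c(G) \leq 2$. The standard inequality $|V(G)| \leq |E(G)| + c(G)$ (valid for any graph, tight for forests) then yields $|V(G)| \leq b + 2$.

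Finally, I would enumerate quadruples by first fixing a combinatorial pattern and then filling in the distinct vertex values. A pattern specifies the partition of the $8k$ edge-positions into $b$ equivalence classes of coinciding $L$-values, together with an orientation choice recording whether each coincidence is of the form $(\pi(i{-}1),\pi(i))=(\pi(j{-}1),\pi(j))$ or $(\pi(i{-}1),\pi(i))=(\pi(j),\pi(j{-}1))$. The number of such patterns is bounded by a constant $C_k$ depending only on $k$, since $b \leq 4k$. Given the pattern, the quadruple is fully determined by an assignment of values to the at most $b+2$ distinct vertices, each drawn from $[p]$ or $[n]$ according to its bipartition class. Since $p/n \to y \in (0,\infty)$ gives $\max(p,n) = O(n)$, this contributes at most $\max(p,n)^{b+2} = O(n^{b+2})$ assignments, whence $Q_{k,4}^b \leq C\, n^{b+2}$.

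The crux of the argument is the inequality $c(G) \leq 2$, which uses only the cross-matching hypothesis; the joint-matching hypothesis (every edge traversed $\geq 2$ times across the four circuits) is not needed for the vertex count, though it is automatic whenever the associated expectation $\E[Y_{\pi_1} Y_{\pi_2} Y_{\pi_3} Y_{\pi_4}]$ is nonzero. I expect the most care to go into the bipartite bookkeeping (tracking $p$-type versus $n$-type vertices) and into the precise formulation of $H$ and the reduction $c(G) \leq c(H) \leq 2$; everything else is standard.
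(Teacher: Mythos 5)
Your proposal is correct. Note first that the paper itself does not supply an argument here: it simply defers to Lemma 4.2 of \citep{bose2022random} (the Wigner case) and remarks that the same reasoning applies because $1\le\pi(2i)\le p$, $1\le\pi(2i-1)\le n$ and $p\asymp n$. That underlying argument is the classical generating-vertex count (each circuit contributes $\pi_i(0)$ plus one generating vertex per new letter, and cross-matching kills two of the potential $b+4$ free choices). Your route reaches the same bound through a self-contained Euler-type inequality: cross-matching forces the auxiliary graph $H$ on $\{1,2,3,4\}$ to have minimum degree one, hence at most two components, hence $c(G)\le 2$ for the union graph, and $|V(G)|\le |E(G)|+c(G)=b+2$; this makes the origin of the ``$+2$'' completely transparent and correctly isolates cross-matching (not joint-matching) as the hypothesis doing the work, joint-matching entering only through $b\le 4k$. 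The one place to tighten the write-up is the final enumeration: the edge-coincidence pattern with orientations does not by itself determine which vertex positions carry equal values (coincidental equalities not forced by the pattern can occur), so you should enumerate instead by the partition of the $4(2k+1)$ vertex positions induced by actual equality of values --- there are boundedly many such partitions in $k$, the graph argument shows any admissible one has at most $b+2$ classes, and each class receives a value from a set of size $\max(p,n)=O(n)$. With that cosmetic repair the bound $Q_{k,4}^b\le C\,n^{b+2}$ follows exactly as you state.
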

This was proved for the Wigner link function in Lemma 4.2 in \citep{bose2022random}. The arguments in that proof can be used for the $S$ link function here as $1 \leq \pi(2i)\leq p$ and $1 \leq \pi(2i-1)\leq n$, and $p$ and $n$ are comparable for large $n$. We omit the details.

\subsubsection{Proof of Theorem \ref{res:XXt} and Remark \ref{p=n}}\label{proof of theorem and remark}

\begin{proof}[\textit{Proof of Theorem \ref{res:XXt} }]
 (a) We make use of Lemma \ref{lem:genmoment} and use the notion of words and circuits in order to calculate the moments. We break the proof into a few steps.

\vskip5pt
\noindent\textbf{Step 1:} (Reduction to mean zero)  Consider the zero mean matrix 
$\widetilde{Z}=((y_{ij}-\mathbb{E}y_{ij}))$. Now
\begin{align}\label{meanzero-noiid}
n\ \mathbb{E}[(y_{ij}- \mathbb{E} y_{ij})^{2k}]= n\ \mathbb{E}[y_{ij}^{2k}] + n \displaystyle \sum_{t=0}^{2k-1} {{2k} \choose {t}}\mathbb{E}[y_{ij}^{t}]\ (\mathbb{E}y_{ij})^{2k-t}.  
\end{align}
The first term of the r.h.s. equals $g_{2k}(i/p,j/n)$ by \eqref{gkeven}. The second term is tackled as follows:
\begin{align*}
\text{For } t\neq {2k-1}, \ \ n \ \mathbb{E} [y_{ij}^{t}]\ (\mathbb{E}y_{ij})^{2k-t}& = (n^{\frac{1}{2k-t}}\ \mathbb{E}y_{ij})^{2k-t} \ \mathbb{E}[y_{ij}^{t}]\\
&\overset{n \rightarrow \infty}{\longrightarrow} 0, \ \ \ \text{ by condition } \eqref{gkodd}.  
\end{align*}
\begin{align*}
\text{For } t={2k-1}, \ \ n \ \mathbb{E} [y_{ij}^{2k-1}]\ \mathbb{E}y_{ij}& = (\sqrt{n} \ \mathbb{E} [y_{ij}^{2k-1}]) \ (\sqrt{n}\  \mathbb{E}y_{ij})\\
&\overset{n \rightarrow \infty}{\longrightarrow} 0, \ \ \ \text{ by condition } \eqref{gkodd}.
\end{align*}
Hence from \eqref{meanzero-noiid}, we see condition \eqref{gkeven} is true for the matrix $\widetilde{Z}$. Similarly we can show that \eqref{gkodd} is true for $\widetilde{Z}$. Hence, Assumption A holds for the matrix $\widetilde{Z}$.

Now from Lemma \ref{lem:metric}, 
\begin{align}\label{ineq-S}
L^4\big(F^{S_{Z}}, F^{S_{\widetilde{Z}}}\big) & \leq  \frac{2}{p^2}(\Tr(ZZ^T+\widetilde{Z}\widetilde{Z}^T))(\Tr[(Z-\widetilde{Z})(Z-\widetilde{Z})^T]) \nonumber \\
&\leq \frac{2}{p}\bigg(\displaystyle \sum_{i,j}\big( 2y_{ij}^2+ (\mathbb{E}y_{ij})^2- 2y_{ij}\mathbb{E}y_{ij}\big)\bigg)\bigg(\frac{1}{p}\sum_{i,j}(\mathbb{E}y_{ij})^2\bigg).
\end{align}
The second factor of the rhs in \eqref{ineq-S} is bounded by $$n (\sup_{i,j}\mathbb{E}y_{ij})^2=(\sup_{i,j}\sqrt{n}\mathbb{E}y_{ij})^2\rightarrow 0  \ \ \text{ as } n \rightarrow \infty \ \ \text{ by } \eqref{gkodd}. $$ Now it can be seen applying Borel-Cantelli lemma that $\frac{1}{p}\displaystyle \sum_{i,j} (y_{ij}^2-\mathbb{E}[y_{ij}^2]) \rightarrow 0$ a.s. as $p \rightarrow \infty$ (proof is given in Section \ref{appendix}). Also $\mathbb{E}\big[\frac{1}{p}\sum_{ij}y_{ij}^2\big] \rightarrow \int_{[0,1]^2} g_2(x,y)\ dx\ dy$. Hence, $$\mathbb{P}\big[\{\omega; \limsup_p\frac{1}{p}\displaystyle \sum_{i,j}y_{ij}^2(\omega)= \infty\}\big]=0.$$ Therefore the first term of the rhs in \eqref{ineq-S} also tends to zero a.s.
Hence,  the LSD of $S_{Z}$ and $S_{\widetilde{Z}}$ are same a.s. Thus we can assume that the entries of $Z$ have mean 0.

 We will now verify the conditions (i), (ii) and (iii) of Lemma \ref{lem:genmoment}. 
\vskip5pt
\noindent\textbf{Step 2:} (Verification of the fourth moment condition for $S_{Z}$) Observe that 
 \begin{align}\label{fourthmoment-XXt}
 \frac{1}{p^{4}} \mathbb{E}[\Tr(ZZ^T)^k \ - \ \mathbb{E}(\Tr(ZZ^T)^k)]^4\ = \frac{1}{p^4} \displaystyle \sum_{\pi_1,\pi_2,\pi_3,\pi_4} \mathbb{E}[\displaystyle \Pi_{i=1}^4 (Y_{\pi_i}\ - \ \mathbb{E}Y_{\pi_i})].
 \end{align}
 If $(\pi_1,\pi_2,\pi_3,\pi_4)$ are not jointly-matched, then one of the circuits has a letter that does not appear elsewhere. 
	Hence by independence and mean zero assumption, $\mathbb{E}[\displaystyle \Pi_{i=1}^4 (Y_{\pi_i}\ - \ \mathbb{E}Y_{\pi_i})]=0$.
  If $(\pi_1,\pi_2,\pi_3,\pi_4)$ are not cross-matched, then one of the circuits say $\pi_j$ is only self-matched. Then we have $\mathbb{E}[Y_{\pi_j}\ - \ \mathbb{E}Y_{\pi_j}]=0$. So again we have  $\mathbb{E}[\displaystyle \Pi_{i=1}^4 (Y_{\pi_i}\ - \ \mathbb{E}Y_{\pi_i})]=0$.

So we consider only circuits $(\pi_1,\pi_2,\pi_3,\pi_4)$ that are jointly- and cross-matched. Here each circuit is of length $2k$, so the total number of edges ($L-$ values) is $8k$. As the circuits are at least pair-matched, the number of distinct edges is at most $4k$. 

Suppose $\pi_i$ has $k_i$ distinct letters, $1\leq i \leq 4$ with $k_1+k_2+k_3+k_4=b$. Suppose the $j$th distinct letter appears $s_j$ times across $\pi_1,\pi_2,\pi_3,\pi_4$ and first at the $i_j-$th position.
Let $b_1$ and $b_2$ ($b_1+b_2=b$) be respectively the number of even and odd $s_i$'s, denoted by $s_{i_1},s_{i_2},\ldots,s_{i_{b_1}}$and $s_{i_{b_1+1}},s_{i_{b_1+2}},\ldots,s_{i_{b_2}}$.  Each term can then be written as 
\begin{align*}
 \frac{1}{p^4}  \displaystyle \sum_{b=1}^{4k} p^{-{b_1}} p^{-(b_2-\frac{1}{2})}
 \prod_{j=1}^{b_1} \  g_{s_{i_j},n}(\pi(i_j-1)/p,\pi(i_j)/n) \ \prod_{m=b_1+1}^{b_1+b_2} n^{\frac{b_2-(1-1/2)}{b_2}} \mathbb{E}[y_{\pi(i_{m}-1)\pi(i_{m})}^{s_{i_m}}]. 
 \end{align*}

We note that $g_{s_{i_j},n}   \rightarrow  g_{s_{i_j}}$  for all $1\leq j \leq b_1$. Therefore, the sequence $\|g_{s_{i_j},n}\|$ is bounded by  a constant $M_{j}$. Also as $\frac{b_2-(1-1/2)}{b_2}<1$, by \eqref{gkodd}, we have $n^{\frac{b_2-(1-1/2)}{b_2}} \mathbb{E}[y_{\pi(i_{m}-1)\pi(i_{m})}^{s_{i_m}}]  $ is bounded by $1$ for $n$ large when $b_1+1\leq m \leq b_1+b_2$. Let
$$M^{\prime} = \underset{b_1+b_2=b}{\max} \{M_{t},1:  1 \leq t \leq b_1 \}\ \mbox{ and }\ M_0^{\prime}=  \max \{{M^{\prime}}^b:  1 \leq b \leq 2k \}. 
$$
By \eqref{W-S} and Lemma \ref{lem:moment}, the number of such circuits that have $b$ distinct letters ($b=1,\ldots, k$) is bounded by $C_1n^{b+2}$ for some constant $C_1>0$. Therefore with $C_2=y^{b+2}C_1$,
 \begin{align*}
\frac{1}{p^{4}} \mathbb{E}[\Tr(ZZ^T)^k \ - \ \mathbb{E}(\Tr(ZZ^T)^k)]^4\
& \leq C_2 M_0^{\prime} \displaystyle \sum_{b=1}^{4k} \frac{1}{p^{b+3\frac{1}{2}}} p^{b+2} 
= \mathcal{O}(p^{-\frac{3}{2}}).
\end{align*}
This completes  the  proof of Step 2.\vskip3pt

\noindent\textbf{Step 3:} (Verification of  the first moment condition (i) of Lemma \ref{lem:genmoment})  
By Lemma \ref{lem:genmoment} and the previous step, it is now enough to show  that for every $k\geq 1$, 
   $\displaystyle \lim_{n\to\infty}\frac{1}{p}\mathbb{E}[\Tr(ZZ^T)^{k}]$ exists and is given by $\beta_k( \mu^{\prime})$ for each $k \geq 1$. 
First note that, we can write \eqref{genmoment-XXt} as \begin{align}\label{moment-XXt1}
\lim_{n \rightarrow \infty}\frac{1}{p}\mathbb{E}[\Tr(ZZ^T)^k]
 &=   \displaystyle\lim_{n \rightarrow \infty} \displaystyle \sum_{b=1}^k  \Big[\frac{1}{p} \sum_{\omega \in SS_b(2k)} \sum_{\pi \in \Pi(\boldsymbol{\omega})}\ \mathbb{E}(Y_{\pi}) + \frac{1}{n} \sum_{\underset{\boldsymbol {\omega} \text{ with b letters}}{\omega \notin SS(2k)}}  \sum_{\pi \in \Pi(\boldsymbol{\omega})}\ \mathbb{E}(Y_{\pi})\Big] .\nonumber \\
& = T_1+T_2.
\end{align}
Suppose that $\boldsymbol {\omega}$ has $b$ distinct letters and let $\pi \in \Pi_S(\boldsymbol {\omega})$. Suppose the 
$j$th new letter appears at the $(\pi(i_j-1),\pi(i_j))-$th position for the first time, $1\leq j\leq b$. Let $D$ denote the set of all distinct generating vertices. Thus $|D|\leq (b+1)$.

Suppose $\boldsymbol {\omega}$ has $b$ distinct letters but does not belong to $SS(2k)$.
Then from Lemma \ref{S-words},  
$|D|\leq b$. Hence $\boldsymbol {\omega}$, and as a consequence, $T_2$  has no contribution to \eqref{moment-XXt1}.

Now suppose $\boldsymbol {\omega} \in SS_b(2k)$ with $(r+1)$ even generating vertices. By Lemma \ref{S-words}, $\boldsymbol {\omega}$ has $(b+1)$ distinct generating vertices. For each $j \in \{1,2,\ldots , b\}$ denote $(\pi(i_j-1),\pi(i_j))$ as $(t_j,l_j)$. Then $t_1=\pi(0)$ and $l_1=\pi(1)$. It is easy to see that any distinct $(t_j,l_j)$ corresponds to a distinct letter in $\boldsymbol {\omega}$. Suppose the $j$th new letter appears $s_j$ times in $\boldsymbol {\omega}$. Clearly all the $s_j$ are even. So the total contribution of this $\boldsymbol {\omega}$ to $T_1$ in \eqref{moment-XXt1} is: 
\begin{align}\label{finitesum-XXt}
\frac{1}{pn^b} \displaystyle \sum_{S} \prod_{j=1}^b g_{s_j,n}(t_j/p,l_j/n)
\end{align} 
Recall that there are $(r+1)$ even generating vertices in $D$ with range between $1$ and $p$, and $(b-r)$ vertices (odd generating) with range between $1$ and $n$. So as $n \rightarrow \infty$, \eqref{finitesum-XXt} converges to
\begin{equation}\label{ss2k-limit}
y^r \int_{[0,1]^{b+1}} \displaystyle \prod_{j=1}^b g_{k_j}(x_{t_j},x_{l_j})\  
\prod_{i \in S} dx_{i}.
\end{equation}
Hence we obtain
\begin{align}\label{moment-XXt}
\displaystyle \lim_{p \rightarrow \infty} \frac{1}{p}\mathbb{E}[\Tr S^k]= \sum_{b=1}^k\sum_{r=0}^{b-1} \sum_{\underset{\underset{\text{ even generating vertices}}{\text{with }(r+1)}}{\pi \in SS_b(2k)}} y^r \int_{[0,1]^{b+1}} \displaystyle \prod_{j=1}^b g_{k_j}(x_{t_j},x_{l_j})\ \prod_{i \in S} dx_{i}.
 \end{align}
 
This completes the verification of the first moment condition.
 \vskip5pt
\noindent \textbf{Step 4:} (Uniqueness of the measure)  We have obtained 
\begin{align*}
\gamma_{k}= &\lim_{p \rightarrow \infty}\frac{1}{p}\mathbb{E}[\Tr(S)^{k}]\leq \sum_{b=1}^k\sum_{r=0}^{b-1} \sum_{\underset{\underset{\text{ even generating vertices}}{\text{with }(r+1)}}{\sigma \in SS_b(2k)}} y^r M_{\sigma}
\end{align*}
Let $c=\max(y,1)$. Then 
\begin{align*}
\gamma_{k} \leq \displaystyle \sum_{\sigma \in SS(2k)} c^{k}M_{\sigma}
 \leq \displaystyle \sum_{\sigma \in \mathcal{P}(2k)} c^k M_{\sigma} 
 =c^k \alpha_{k}.
\end{align*}
 As $\{\alpha_{k}\}$ satisfies Carleman's condition, $\{\gamma_{k}\}$ also does so. By Lemma \ref{lem:genmoment}, we see that there exists a measure $ \mu$ with  moments $\{\gamma_{k}\}_{k \geq 1}$ such that $\mu_{S_{Z}}$ converges a.s. to $ \mu$.
  This completes the proof of part (a).
\vskip5pt
 
\noindent(b)  From Lemma \ref{lem:metric}, we have 
\begin{align}\label{levy-XXt}
L^4(F^{S},F^{S_{Z}}) &\leq \frac{2}{p^2}(\Tr(XX^T+ZZ^T))(\Tr[(X-Z)(X-Z)^T]) \nonumber\\
& = \frac{2}{p} \bigg(\displaystyle2 \sum_{i,j}y_{ij}^2+\sum_{ij}x_{ij}^2\boldsymbol{1}_{[|x_{ij}|>t_n]} \bigg)\bigg(\frac{1}{p}\sum_{ij}x_{ij}^2\boldsymbol{1}_{[|x_{ij}|>t_n]}\bigg).
\end{align}
The second factor in the above equation tends to zero a.s. (or in probability) as $n \rightarrow \infty$ due to the condition 
(\ref{truncation-ineq}).
Now $\displaystyle\frac{1}{p} \sum_{i,j} (y_{ij}^2-\mathbb{E}[y_{ij}^2]) \rightarrow 0 $ a.s. (see Section \ref{appendix}) and $\mathbb{E}\big[\frac{1}{p}\sum_{ij}y_{ij}^2\big] \rightarrow \int_{[0,1]^2} g_2(x,y)\ dx\ dy$, and hence is finite. This implies that $\frac{1}{p}\sum_{i,j}y_{ij}^2$ is bounded a.s. 
 Therefore the first factor in \eqref{levy-XXt} is bounded a.s. and thus the rhs of \eqref{levy-XXt} tends to 0 as $p,n\rightarrow \infty$.

 From the discussion above, we infer that the ESD of $XX^T$ converges to the probability measure $\mu$ a.s. (or in probability).
This completes the proof of the theorem. 
\end{proof}
\begin{proof}[\textbf{Proof of Remark \ref{p=n}}]
In the case $p=n$, if $\{g_{2k,n}\}$ are symmetric functions, then the assumption on the entries of $X_n$ are no different from that on the entries of $W_n$ in Theorem 2.1 of \citep{bose2022random}. Now from \eqref{moment-XXt}, and equation (4.11) in \citep{bose2022random} we see that $\mathbb{E}[Y^k]=\mathbb{E}[Y^{\prime 2k}], k \geq 1$. 

However observe that even though $\{g_{2k,n}\}$ are not symmetric for every $n$, the  functions $\{g_{2k}\}$ are symmetric and hence $\mathbb{E}[Y^k]=\mathbb{E}[Y^{\prime 2k}], k \geq 1$ still holds, (see \eqref{moment-XXt}) as the limiting moments depend only on $\{g_{2k}\}$. Therefore, by the uniqueness criterion of a probability distribution via moments, we have $Y \overset{\mathcal{D}}{=} Y^{\prime 2}$. As the limiting moments (\eqref{moment-XXt}) depend on $\{g_{2k}\}$ and $\lim p/n$, if $p/n \rightarrow 1$, and $\{g_{2k}\}$ are symmetric, we have $Y \overset{\mathcal{D}}{=} Y^{\prime 2}$.
\end{proof}

\begin{proof}[Proof of Remark \ref{unbounded support}]
Consider $k=mt$ for some $t \geq 1$. Then from \eqref{moment-XXt}, we have 
\begin{align}\label{moment-unbounded support1}
\beta_{k}(\mu)= \sum_{b=1}^k\sum_{r=0}^{b-1} \sum_{\underset{\underset{\text{ even generating vertices}}{\text{with }(r+1)}}{\pi \in SS_b(2k)}} y^r \int_{[0,1]^{b+1}} \displaystyle \prod_{j=1}^b g_{k_j}(x_{t_j},x_{l_j})\  \prod_{i \in S} dx_{i}.
\end{align}
Recall that $\pi$ in the above expression could be described as a word in $SS_{b}(2k)$ with $(r+1)$ even generating vertices. Let us focus on words $\omega\in SS_t(2k)$ with $t$ distinct letters and where each letter appears $2m$ times in pure even blocks. Clearly $\omega$ has only one even generating vertex $\pi(0)$. Therefore as $n \rightarrow \infty$, the contribution of $\omega$ in the limiting moment is (see \eqref{ss2k-limit}):
\begin{equation}\label{ss2k-unbounded}
\int_{[0,1]^{t+1}} g_{2m}(x_0,x_1)g_{2m}(x_0,x_2)\cdots g_{2m}(x_0,x_t) \ dx_0dx_1\cdots dx_t= \int_{[0,1]} \big(f_{2m}(x_0)\big)^t \ dx_0 .
\end{equation}
Next, observe that the number of such words 
\begin{equation}\label{number of such words}
\frac{1}{t!} {{mt} \choose m}{{mt-t} \choose m}\cdots {m \choose m}= \frac{1}{t!}\frac{(mt)!}{(m!)^t}. 
\end{equation}
Since the integrand in \eqref{moment-unbounded support1} is non-negative, 
using \eqref{ss2k-unbounded} and \eqref{number of such words}, we have
\begin{align*} 
\beta_{k}(\mu)& > \frac{1}{t!}\frac{(mt)!}{(m!)^t} \int_{[0,1]} \big(f_{2m}(x_0)\big)^t \ dx_0 . \nonumber\\
& = \frac{(mt)!}{t!} \int_{[0,1]}\bigg(\frac{f_{2m}(x_0)}{m!}\bigg)^t \ dx_0 > c \frac{(mt)!}{t!} , \ k=mt.
\end{align*}
Therefor for $t$ sufficiently large (with $k=mt$), 
\begin{equation*}
(\beta_k(\mu))^{1/k}> K \ t^{\eta} \ \ \text{ for some constant } K>0 \  \text{ and } \eta>0.
\end{equation*}
Therefore $(\beta_k(\mu))^{1/k} \rightarrow \infty$ as $k=mt \rightarrow \infty$. Hence $\mu$ has unbounded support.
\end{proof}

\noindent \textbf{Moments of the variance profile matrices}: Now we give a description of the moments of LSD $\nu$ for $S$ matrices with variance profile. From Step 3 in the proof of Theorem \ref{res:XXt}, for each word in $SS_{b}(2k)$ with $(r+1)$ even generating vertices and where the distinct letters appear $s_1,s_2,\ldots,s_b$ times, its contribution to the limiting moments is (see \eqref{ss2k-limit})
$$y^r\int_{[0,1]^{b+1}} \prod_{j=1}^b \sigma^{s_j}(x_{t_j},x_{l_j}) \ \prod_{i\in S} dx_i \prod_{j=1}^b C_{s_j},$$
where $(t_j,l_j)$ denotes the position of the first appearance of the $j$th distinct letter in the word.
Hence the $k$th moment of $\nu$ is
\begin{align*}
\beta_{k}(\nu)= \sum_{b=1}^k\sum_{r=0}^{b-1} \sum_{\underset{\underset{\text{ even generating vertices}}{\text{with }(r+1)}}{\pi \in SS_b(2k)}} y^r \int_{[0,1]^{b+1}} \displaystyle \prod_{j=1}^b \sigma^{s_j}(x_{t_j},x_{l_j}) dx_i \prod_{j=1}^b C_{s_j}.
\end{align*}

\subsection{Hypergraphs, Noiry-words and $SS(2k)$}\label{connection-ss2k}
The distribution of the $S$ matrix with triangular i.i.d. entries, was studied in \citep{Benaych-Georges2012} and \citep{noiry2018spectral}, where the authors used the concepts of \textit{Hypergraphs} and \textit{words}, which we call \textit{Noiry words} here. In Section \ref{triangular iid}, we discussed the triangular i.i.d. cases and showed how Theorem \ref{res:XXt} is used in this situation. We also described the limiting moments via $SS(2k)$ partitions. Now we verify that the moments that we have obtained are identical with those obtained in \citep{Benaych-Georges2012} and \citep{noiry2018spectral}. 

\begin{definition}
Let $G$ be a graph with vertex set $V$. Let $\pi$ and $\tau$  be partitions, respectively, of $V$ and the edge set. Then the hypergraph $H(\pi,\tau)$ is a graph with vertex set $G_{\pi}$ (i.e. $\pi$) and edges $\{E_W; W\in \tau\}$, where each edge $E_W$ is the set of blocks $J \in \pi$ such that at least one edge of $G_{\pi}$ starting or ending at $J$ belongs to $W$.
Further if no two of the edges can have more than one common vertex, then $H(\pi,\tau)$ is said to be a \textit{hypergraph with no cycles}.
\end{definition}
For details on Hypergraphs, see Sections 5.3 and 12.3.2 in \citep{Benaych-Georges2012} and \citep{berge1984hypergraphs}. In \citep{Benaych-Georges2012}, their equation (22) describes the moments of the LSD of $S$ as a sum on \textit{Hypergraphs with no cycle}. 
\begin{lemma}\label{hypergraph-ss2k}
For every word $\boldsymbol{\omega} \in SS_b(2k)$, there exists partitions $\pi,\tau \in \mathcal{P}(k)$ such that there is a unique hypergraph $H(\pi,\tau)$ which has no cycle with $|\pi|+|\tau|=b+1$. The converse is also true.
\end{lemma}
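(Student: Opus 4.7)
The goal is to exhibit a bijective correspondence between special symmetric words $\omega\in SS_b(2k)$ and acyclic hypergraphs $H(\pi,\tau)$ of the prescribed type. The key interpretation is that $\pi$ partitions the $k$ row-indices and $\tau$ partitions the $k$ column-indices appearing in the expansion of $\mathbb{E}[\Tr((XX^T)^k)]$, while each distinct letter of $\omega$ corresponds to a distinct entry of $X$, i.e., to a distinct edge in the bipartite graph with row-vertex set $[k]/\pi$ and column-vertex set $[k]/\tau$. Under this interpretation $b$ is the number of edges and $|\pi|+|\tau|$ is the total number of vertices, so the condition $|\pi|+|\tau|=b+1$ becomes exactly the Euler identity characterizing a connected tree.

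For the forward direction, given $\omega\in SS_b(2k)$, I would fix any representative circuit and let $\pi$ (respectively $\tau$) group indices $\ell,\ell'\in[k]$ together whenever the row-vertices $\pi(2\ell)$ and $\pi(2\ell')$ (respectively column-vertices $\pi(2\ell-1)$ and $\pi(2\ell'-1)$) coincide. That the resulting bipartite graph has exactly $b$ edges and $|\pi|+|\tau|=b+1$ vertices follows from Lemma \ref{lem:SS2k}: a special symmetric word has precisely $b+1$ distinct generating vertices, splitting as $r+1$ even (row) vertices and $b-r$ odd (column) vertices. Acyclicity of $H(\pi,\tau)$ is then obtained inductively: removing the last new letter's pure even block strips off a leaf edge and leaves a special symmetric word on $b-1$ letters with a tree one edge smaller, exactly mirroring the induction used in the proof of Lemma \ref{lem:SS2k}. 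The base case $b=1$ gives the trivial tree on two vertices with one edge, for which $|\pi|+|\tau|=2=b+1$.

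For the converse, given an acyclic $H(\pi,\tau)$ with $|\pi|+|\tau|=b+1$, one has a bipartite tree with $b$ edges. A canonical Eulerian depth-first traversal of this tree, respecting the natural doubled-edge orientation coming from the $XX^T$ structure, produces a circuit of length $2k$ whose word $\omega$ has $b$ distinct letters and satisfies condition (b)(i) of the special symmetric definition; hence $\omega\in SS_b(2k)$. Uniqueness of the hypergraph produced from $\omega$ is immediate, since the construction depends only on the equivalence classes recorded by $\omega$ and not on the representative circuit chosen.

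The main obstacle will be verifying carefully that the no-cycle property in the hypergraph sense given in the definition --- which requires that no two distinct hyperedges share more than one common vertex --- follows from condition (b)(i) of special symmetry, rather than just from the weaker bipartite-tree property. A second subtlety is aligning the two sides of the bookkeeping: the $2k$ positions of $\omega$ pair up into $k$ row-positions and $k$ column-positions, and the cyclic closure $\pi(0)=\pi(2k)$ of the circuit must be compatible with the tree being traversed as an Eulerian circuit under the $L1/L2$ alternation. This is exactly the point where the parity condition in (b)(ii) does real work, and handling it is where I would expect the technical bulk of the argument to lie.
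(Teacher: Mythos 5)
Your forward direction is essentially the paper's: you build $\pi$ and $\tau$ from the row- and column-classes of a representative circuit, and the count $|\pi|+|\tau|=b+1$ does come from the $b+1$ free generating vertices of Lemma \ref{lem:SS2k}. Your leaf-stripping induction for acyclicity (peel off the last new letter's pure even block, which is a pendant edge, and recurse) is a legitimate alternative to the paper's argument, which instead assumes a cycle and derives a contradiction with the free choice of all generating vertices guaranteed by Lemma \ref{S-words}. But note that the issue you flag as ``the main obstacle'' is actually the easy part: once the bipartite graph on $(b+1)$ classes with $b$ edges is a connected tree, two hyperedges sharing two vertices would produce a $4$-cycle in that bipartite graph, which a tree cannot contain. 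You defer this instead of closing it.

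The genuine gap is in the converse. The inverse map is not a ``canonical Eulerian depth-first traversal'' of the tree: since $\pi,\tau\in\mathcal{P}(k)$ already carry positional information, the word is forced --- the letter at position $i\in[k]$ is the pair (block of $\pi$ containing $i$, block of $\tau$ containing $i$) --- and this is how the paper constructs it. A traversal-based construction introduces choices, need not return a word whose position-partitions are the given $\pi$ and $\tau$, and therefore does not establish that the two maps are mutually inverse, which is the content of the bijection. Worse, your assertion that the traversal's word ``satisfies condition (b)(i)'' is exactly the statement that needs proof, and you give no mechanism for it. The paper's mechanism is: acyclicity of $H(\pi,\tau)$ implies that all $b+1$ generating vertices of the positionally-defined word can be chosen freely, and by Lemma \ref{S-words} free choice of all generating vertices occurs only for special symmetric words. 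Without this step (or a substitute) the converse is unproved.
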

\begin{proof}
Let $\boldsymbol{\omega} \in SS_b(2k)$ with $(r+1)$ and $(b-r)$ even and odd generating vertices respectively.
Suppose the even and the odd generating vertices are respectively $\pi(i_{t_0})=\pi(0),\pi(i_{t_1}),\ldots,\pi(i_{t_r})$ and 
$\pi(i_{m_1})=\pi(1),\pi(i_{m_2}),\ldots ,\pi(i_{m_{b-r}})$.
Let $V_j= \{\pi(2i): \pi(2i)=\pi(i_{t_j}), 1 \leq i \leq k\}$, $0 \leq j \leq r$ and $W_j= \{\pi(2i-1): \pi(2i-1)=\pi(i_{m_j}), 1 \leq i \leq k\}$, $1 \leq j \leq (b-r)$. Clearly, $\sigma=\{V_j; 0 \leq j \leq r\}$ and $\tau=\{W_j; 1 \leq j \leq (b-r)\}$ are two partitions of $\{1,2,\ldots,k\}$. Therefore, we can construct a hypergraph $H(\sigma,\tau)$ where $\sigma$ is the vertex set and $\{E_W;W\in \tau\}$ is the edge set (see (\ref{hypergraph-ss2k})). 

Now suppose if possible, $H(\sigma,\tau)$ has a cycle. That means by construction, there exists $a,b \ (a\neq b)\in \{1,2,\ldots, (b-r)\}$ and $q,l \ (q\neq l)\in \{0,1, \ldots,r\}$ such that $V_q, V_l \in W_a \cap W_b$. That is, there are edges $(\pi(k_1-1),\pi(k_1)),(\pi(k_2-1),\pi(k_2)),(\pi(k_3-1),\pi(k_3)),(\pi(k_4-1),\pi(k_4))$ with $k_i,1\leq i \leq 4$ odd such that $\pi(k_1-1) \in V_q, \pi(k_1)\in W_a$, $\pi(k_2-1) \in V_q, \pi(k_2)\in W_b$, $\pi(k_3-1) \in V_l, \pi(k_3)\in W_a$ and $\pi(k_4-1) \in V_l, \pi(k_4)\in W_b$. As the positions $(\pi(k_i-1),\pi(k_i),i=1,2,3,4$ are all distinct, there are four distinct letters that appear at these four positions in $\boldsymbol{\omega}$. Without loss of generality suppose, from left to right $(\pi(k_4-1),\pi(k_4))$ is the rightmost (among the four positions mentioned above) in $\boldsymbol{\omega}$. Since $\pi(k_4-1)\in V_l$ and $\pi(t_l)$ comes before $\pi(k_4-1)$, it cannot be chosen freely. Using a similar argument,  $\pi(k_4)$ also cannot be chosen freely. Also they have been chosen as generating vertices of three different letters that have appeared in the positions $(\pi(k_i-1),\pi(k_i)), 1 \leq i \leq 3$. Using Lemma \ref{S-words}, this is not possible as the letter at $(\pi(k_4-1),\pi(k_4))$ is different from the previous three letters. Thus 
$H(\sigma,\tau)$  does not have a cycle. 
Moreover, it is evident by construction that every special symmetric word we get a unique $H(\sigma,\tau)$ without any cycles.\vskip5pt

\noindent Conversely, suppose $H(\sigma,\tau)$ is a hypergraph with no cycle and $|\sigma|+|\tau|=b+1$. We form a word of length $2k$ from it in the following manner. Now $\sigma ,\tau \in \mathcal{P}(k)$. Let $\sigma=\{V_0,V_1,\ldots,V_r\}$ and $\tau=\{W_1,\ldots,W_{b-r}\}$ (as $|\sigma|+|\tau|=b+1$). Then we choose the even vertices $\pi(2i),0 \leq i\leq k-1$ from $\sigma$ and odd vertices $\pi(2i-1), 1 \leq i \leq k$ from $\tau$ and $\pi(i)=\pi(j)$ if $i$ and $j$ belong to the same block of $\sigma$ or $\tau$ (depending on $i$ and $j$ both being even or odd respectively). Thus we get a word $\boldsymbol{\omega}$ of length $2k$ whose even and odd generating vertices are $\{\pi(\min \{V_s\})\}_{0 \leq s \leq r}$ and $\{\pi(\min \{W_t\})\}_{1 \leq t \leq (b-r)}$ respectively. Thus there are $b$ distinct letters in $\boldsymbol{\omega}$. Now as $H(\sigma,\tau)$ does not have a cycle, using the same arguments as the previous paragraph, it can be shown that all the generating vertices can be chosen freely. This can happen only if the word is special symmetric. Thus we obtain $\boldsymbol{\omega} \in SS_b(2k)$ with $(r+1)$ even generating vertices. It is easy to see that two hypergaphs with no cycle cannot give rise to the same special symmetric word. 

Hence there is a one-one correspondence between special symmetric words and hypergraphs with no cycles. This completes the proof of this lemma.
\end{proof}

Thus we see that \eqref{Zak-limit} can be written as 
\begin{align}\label{equality-hypergraph}
\beta_{k}(\mu)& =\displaystyle \sum_{r=0}^{k-1}\sum_{\underset{\underset{\text{ even generating vertices}}{\text{with }(r+1)}}{\pi \in SS(2k)}} y^r C_{\pi} \nonumber \\
& = \displaystyle \sum_{r=0}^{k-1} \sum_{\underset{\text{with }r+1 \text{ blocks}}{\sigma\in \mathcal{P}(k)}} \sum_{\underset{H(\sigma,\tau) \text{ has no cycle}}{\tau \in \mathcal{P}(k)}} \prod_{i=1}^{b-r} y^{\frac{r}{b-r}} f(W_i)
\end{align} 
where $W_i$ are the blocks of $\tau$ and $f$ is some function determined by $(C_{2k})_{k\geq 1}$ (and not necessarily multiplicative in the sense of partitions.) Therefore, when the entries satisfy \eqref{ck}, using \eqref{equality-hypergraph} and Remark \ref{unbounded support}, we obtain the conclusions of Theorem 3.2 of \citep{Benaych-Georges2012}. As this is a special case (as the entries are iid for every $n$) of our result, Theorem \ref{res:XXt}, it indeed generalises Theorem 3.2 of \citep{Benaych-Georges2012}.\vskip5pt

In Proposition 3.1 in \citep{noiry2018spectral}, the author describes the limiting moments via equivalence class of words. His notion of words is different from ours  and so we call the former \textit{Noiry words}. 
  \vskip5pt

\noindent \textbf{Noiry words}:  Suppose $G=(V,E)$ is a graph with labelled  vertices. A word of length $k\geq 1$ on $G$ is a sequence of labels $i_1,i_2,\ldots,i_k$ such that for each $j\in \{1,2,\ldots,k-1\}$, $\{i_j,i_{j+1}\}$ is a pair of adjacent labels, i.e., the associated vetrices are neighbours in $G$. A word of length $k$ is closed if $i_1=i_k$. Such closed words will be called Noiry words. See Section 3 in \citep{noiry2018spectral} for more details.\\

\noindent \textbf{Equivalence of Noiry words}: Let $\boldsymbol{i}=i_1,i_2,\ldots,i_k$ and $\boldsymbol{i^{\prime}}=i_1^{\prime},i_2^{\prime},\ldots,i_k^{\prime}$ be two Noiry words on two labeled graphs $G$ and $G^{\prime}$ with vertex set $V$. These words are said to be equivalent if there is a bijection $\sigma$ of $\{1,2,\ldots,|V|\}$ such that $\sigma(i_j)=i_j^{\prime},1 \leq j \leq k$. This defines an equivalence relation on the set of all Noiry words, thereby giving rise to equivalence classes of Noiry words. 

Using the developments in Section 3 and equation (3.2) of \citep{noiry2018spectral})$\mathbf{W}_k(a,a+1,l,\boldsymbol{b}), \boldsymbol{b}=(b_1,b_2,\ldots,b_a)\in \mathbb{N}^a, b_i\geq 2, \displaystyle \sum_{i=1}^a b_i=2k$,  denotes an equivalence class of Noiry words on a labeled rooted planar tree with $a$ edges, of which $l$ are odd and each edge is traversed $b_i$ times, $1 \leq i \leq a$. Then the $k$th moment of the LSD is given in equation (3.2) of \citep{noiry2018spectral} 
as   
\begin{equation*}
\beta_k(\mu)= \displaystyle \sum_{a=1}^k\sum_{l=1}^a \alpha^{l} \sum_{\underset{b_i\geq 2,b_1+\cdots+ b_{a}=2k}{\boldsymbol{b}=(b_1,b_2,\ldots,b_a)}}|\mathbf{W}_k(a,a+1,l,\boldsymbol{b})| \prod_{i=1}^a C_{b_i}
\end{equation*}

In the next lemma we show how each of these equivalence classes of words correspond to special symmetric words.

\begin{lemma}
Each equivalence class $\mathbf{W}_k(a,a+1,l,\boldsymbol{b}),\boldsymbol{b}=(b_1,b_2,\ldots,b_a)\in \mathbb{N}^a, b_i\geq 2, \displaystyle \sum_{i=1}^a b_i=2k$ is a word $\boldsymbol{\omega} \in SS_a(2k)$ with $l$ odd generating vertices and where each letter appears $b_i, 1\leq i \leq a$ times in $\boldsymbol{\omega}$. 
\end{lemma}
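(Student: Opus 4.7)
The plan is to construct an explicit bijection between the equivalence class $\mathbf{W}_k(a,a+1,l,\boldsymbol{b})$ and the set of special symmetric words in $SS_a(2k)$ having $l$ odd generating vertices and letter multiplicities $(b_i)_{1 \leq i \leq a}$. Given a representative Noiry word, that is, a closed walk of length $2k$ on a labeled rooted planar tree $T$ with $a$ edges, I will label the edges $a, b, c, \ldots$ in order of first traversal and record at position $j$ of $\boldsymbol{\omega}$ the letter of the edge used at step $j$. This produces a word of length $2k$ with $a$ distinct letters, letter $i$ appearing exactly $b_i$ times, and the output depends only on the equivalence class of the Noiry word since vertex relabelings of $T$ are invisible.

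The central step is verifying that $\boldsymbol{\omega} \in SS_a(2k)$. The key structural fact is that removing any edge $e$ of $T$ disconnects the tree into two subtrees, so between two successive traversals of $e$ the walk performs a closed excursion on one side of $e$, starting and ending at the same endpoint of $e$. Any closed walk on a tree traverses each of its edges an even number of times, and moreover an equal number of times in each direction. Under the $S$-matrix $L$-value encoding described in \eqref{S-link}, the two possible directions of traversing an edge correspond precisely to occurrences at odd and at even positions of $\boldsymbol{\omega}$. This directly gives condition (b)(i), and hence by the remark in the excerpt also (a) and (b)(ii), of the special symmetric word definition.

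To match the parameter $l$, observe that each generating vertex $\pi(j)$ with $j > 0$ arises from the first traversal of a new edge of $T$ and equals the child endpoint of that edge in the rooting. Since the root $\pi(0)$ sits at even depth, the newly introduced child lies at odd depth precisely when the index $j$ is odd, that is, precisely when $\pi(j)$ is an odd generating vertex in the sense of the paper. Noiry's ``odd edges'' are exactly those whose child endpoint has odd depth, giving the asserted count of $l$ odd generating vertices; the remaining $a-l$ edges account for the extra even generating vertices beyond $\pi(0)$.

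The main obstacle will be formalizing the inverse map, that is, reconstructing the rooted planar tree and its walk from a given $\boldsymbol{\omega} \in SS_a(2k)$. The special symmetric property is precisely what allows the first-occurrence positions of letters to be organized into a tree, with the between-occurrence excursion structure dictating both the rooting and the planar embedding; the condition that each non-focal letter appears equally often in odd and even positions between two consecutive occurrences of the focal letter encodes exactly the requirement that an excursion uses each subordinate edge equally in both directions. Verifying that this construction is inverse to the forward map up to Noiry's vertex-relabeling equivalence is then a careful bookkeeping argument generalizing the classical bijection between non-crossing pair partitions and rooted plane trees to allow each edge to be traversed $b_i \geq 2$ times.
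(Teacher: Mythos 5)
Your proposal is correct in its main thrust but takes a genuinely different route from the paper. The paper's proof is essentially two observations: (i) an equivalence class of Noiry words, being a closed walk on a labelled graph considered up to relabelling of the vertices, is exactly a word (equivalence class of circuits) in the authors' sense; and (ii) the parameter $a+1$ in $\mathbf{W}_k(a,a+1,l,\boldsymbol{b})$ records that the underlying tree has $a+1$ vertices, i.e.\ that all $a+1$ generating vertices of the corresponding word can be chosen freely, and by Lemma \ref{S-words} the words with $a$ distinct letters and $a+1$ free generating vertices are precisely the special symmetric ones. That single appeal dispatches both the verification that the word is special symmetric and the identification of exactly which special symmetric words arise. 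You instead verify the special symmetric conditions directly from the tree-walk structure: your key facts --- that the portion of the walk between two successive traversals of an edge $e$ is a closed excursion on one subtree, that a closed walk on a tree traverses each edge an even number of times and equally often in each direction, and that direction of traversal corresponds to parity of position under the row/column alternation of the circuit --- are all correct and give condition (b)(i) (hence (a) and (b)(ii)); your depth-parity argument matching Noiry's $l$ to the number of odd generating vertices is also sound. What your route costs is the inverse map, which you rightly flag as the main obstacle and leave as a sketch; since the ensuing identity equating the sum over Noiry classes with the sum over $SS(2k)$ needs the correspondence to be a bijection onto $SS_a(2k)$, that step cannot be omitted, whereas in the paper it is absorbed for free into the ``if and only if'' of Lemma \ref{S-words}. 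Your approach buys a self-contained, link-function-free combinatorial picture; the paper's buys brevity by reusing machinery already established for the moment computation.
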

\begin{proof}
Recall from Section \ref{preliminaries-smatrix} that we have defined words to be equivalence classes of circuits with the relation arising from the link functions (see \eqref{S-link}). Now Noiry words are not equivalence classes to begin with, they form equivalence classes if they are relabeled in a certain way as described above. From this and how we have defined equivalence of circuits, observe that an equivalence class of Noiry words is nothing but a word in our case.  Now the only words  with $a$ distinct letters for which $a+1$ generating vertices can be chosen freely are the special symmetric words with $a$ distinct letters (see Lemma \ref{S-words}). Thus $\mathbf{W}_k(a,a+1,l,\boldsymbol{b}),\boldsymbol{b}=(b_1,b_2,\ldots,b_a)\in \mathbb{N}^a, b_i\geq 2, \displaystyle \sum_{i=1}^a b_i=2k$ is a word $\boldsymbol{\omega} \in SS_a(2k)$ with $l$ odd generating vertices where each letter appears $b_i, 1\leq i \leq a$ times in $\boldsymbol{\omega}$.
\end{proof}

Using this lemma it readily follows that 
\begin{align*}
\displaystyle\sum_{a=1}^k \sum_{l=1}^a \sum_{\underset{b_i\geq 2,b_1+\cdots+ b_{a}=2k}{\boldsymbol{b}=(b_1,b_2,\ldots,b_a)}}|\mathbf{W}_k(a,a+1,l,\boldsymbol{b})|  =  \displaystyle \sum_{a=1}^k \sum_{l=1}^{a}\sum_{\underset{\underset{\text{ with block sizes } b_1,\ldots,b_a }{\text{with }l \text{ odd generating vertices}}}{\pi \in SS_a(2k)}} 1 =  \displaystyle \sum_{l=1}^k \sum_{\underset{\text{with }l \text{ odd generating vertices}}{\pi \in SS(2k)}} 1.
\end{align*}
Hence it follows that \eqref{Zak-limit} is same as the moment expression  in equation (3.2) of \citep{noiry2018spectral}.

\section{Details for the $S_{A}$ matrices}\label{SA-matrix}
This section deals with the details for $S_A$. We first describe some notions and definitions followed by a detailed proof of Theorem \ref{res:main}. Finally, we discuss how this theorem can deal with triangular (size dependent) entries, sparsity, i.i.d., variance profile, band and block matrices.
  
\subsection{Premliminaries}\label{preliminaries-SA}
 We recall link functions, circuits and words from Section 4 in \citep{bose2021some}, as applied to $S_{A}$. \vskip5pt

\noindent \textbf{Link function}: The link functions for the eight choices of $A$ are (here $1 \leq i \leq p, 1 \leq j \leq n$):
\begin{itemize}
\item[(i)] Symmetric reverse circulant, $R^{(s)}$: 
$L_{R^{(s)}}(i,j)= (i+j-2)(\text{mod }n), $.
\item[(ii)] Asymmetric reverse circulant, $R$: 
$L_R(i,j)= \begin{cases}
(i+j-2)(\text{mod }n) & i\leq j,\\
-[(i+j-2)(\text{mod }n) ] & i> j.
\end{cases}.$
\item[(iii)] Symmetric circulant, $C^{(s)}$: 
$L_{C^{(s)}}(i,j)= n/2-|n/2-|i-j||$, 
\item[(iv)] Circulant, $C$: 
$L_C(i,j)=(j-i) (\text{mod }n)$. 
\item[(v)] Symmetric Toeplitz, $T^{(s)}$: 
$L_{T^{(s)}}(i,j)= |i-j|$, 
\item[(vi)] Asymmetric Toeplitz, $T$: 
$L_T(i,j)= i-j$. 
\item[(vii)] Symmetric Hankel, $H^{(s)}$: 
$L_{H^{(s)}}(i,j)= i+j$. 
\item[(viii)] Asymmetric Hankel, $H$: 
$L_H(i,j)= \begin{cases}
(i+j) & i\geq j,\\
-(i+j) & i< j.
\end{cases}.$ 
\end{itemize}

\noindent \textbf{Circuits and Words}: Recall the definition of circuits and words from Section \ref{preliminaries-smatrix} for the $S$ matrix. In this case those definitions remain unaltered.
Suppose $A$ is one of the eight mentioned patterned matrices.
Then as before, notice that circuits $\pi$ with $\ell(\pi)=2k$ are required to deal with the $k$th moment of $S_A$.
For any choice of the link $L$, 
\begin{align*}
\xi_{\pi}(2i-1) &= L(\pi(2i-2),\pi(2i-1)), 1 \leq i \leq k\\
\xi_{\pi}(2i) &= L(\pi(2i), \pi(2i-1)),1 \leq i \leq k,
\end{align*}
\begin{align}\label{genmoment-AAt}
\mathbb{E}\big[\Tr(S_{A}^{k})\big] =\mathbb{E}\big[\Tr(AA^T)^{k}\big]
= \sum_{\pi:\ell(\pi)=2k}\mathbb{E}[Y_{\pi}],
\end{align}
where $Y_{\pi}= \displaystyle \prod_{i=1}^k x_{\xi_{\pi(2i-1)}}x_{\xi_{\pi(2i)}}$.\vskip5pt

\noindent \textbf{The class $\Pi(\boldsymbol {\omega})$}: 
For $\boldsymbol {\omega}$, 
\begin{equation}\label{SA-link}
\Pi_{S_{A}}(\boldsymbol {\omega})=\{\pi : \ \boldsymbol {\omega}[i]=\boldsymbol {\omega}[j] \ \Leftrightarrow \xi_{\pi}(i)= \xi_{\pi}(j) \text{ for all } i,j\}.
\end{equation}
Now, 
 \begin{align}\label{momentnoniid-AAt}
\lim_{p \rightarrow \infty}\frac{1}{n}\mathbb{E}[\Tr(S_{A})^{k}]& =\displaystyle\lim_{n \rightarrow \infty}\frac{1}{p} \sum_{\pi:\ell(\pi)=2k}\mathbb{E}[Y_{\pi}] \nonumber\\
& = \displaystyle\lim_{p \rightarrow \infty} \displaystyle \sum_{b=1}^k  \sum_{\underset{\text{with b distinct letters}}{\omega \ \text{matched of length }2k}}\frac{1}{p} \sum_{\pi \in \Pi_{S_A}(\boldsymbol{\omega})} \mathbb{E}(Y_{\pi}).
    \end{align}
Note that all words that appear above are of length $2k$. For every $k \geq 1$, the words of length $2k$ corresponding to the circuits of $A$ and $S_A$, are related. Here we make a key observation in that regard. 
\vskip3pt

\noindent \textbf{Observation (i)}: Let $A^{(s)}$ stand for any of the symmetric matrices  $R^{(s)},H^{(s)},C^{(s)}$ or $T^{(s)}$ and let
$\Pi_{A^{(s)}}(\boldsymbol {\omega})$ be the possibly larger class of  circuits for $A^{(s)}$
with range $1 \leq \pi(i) \leq \max(p,n), 0 \leq i \leq 2k$. Let $\Pi_{S_{A^{(s)}}}(\boldsymbol {\omega})$ and $\Pi_{A^{(s)}}(\boldsymbol {\omega})$ denote the set of all circuits corresponding to a word $\boldsymbol {\omega}$ arising from the circuits corresponding to $A^{(s)}$ and $S_{A^{(s)}}$, respectively. Then, for every $k \geq 1$ and any word $\boldsymbol{\omega}$ of length $2k$, 
\begin{equation}\label{SA-S}
\Pi_{S_A}(\boldsymbol {\omega})\subset \Pi_{S_{A^{(s)}}}(\boldsymbol {\omega})\subset \Pi_{A^{(s)}}(\boldsymbol {\omega}). 
\end{equation}

Now we recall the definition of  even and symmetric words from \citep{bose2021some}.\vskip3pt
 
\noindent\textbf{Even word:}
A word $\boldsymbol {\omega}$ is called \textit{even} if each distinct letter in $\boldsymbol {\omega}$ appears an even number of times. 
We shall denote the set of all even words of length $2k$ as $E(2k)$ and the set of all even words of length $2k$ with $b$ distinct letters as $E_b(2k)$. For example, $ababcc$ is an even word of length 6 with 3 letters. The corresponding partition of $[6]$ is $\{\{1,3\},\{2,4\},\{5,6\}\}$.\vskip3pt

\noindent\textbf{Symmetric word:}
A word $\boldsymbol {\omega}$ is  \textit{symmetric} if each distinct letter appears equal number of times in odd and even positions. 
 We shall denote the set of all symmetric  words of length $2k$ as $S(2k)$ and the set of all symmetric words of length $2k$ with $b$ distinct letters as $E_b(2k)$. 
\vskip3pt
  
\noindent \textbf{Even and odd generating vertices} are defined exactly as before. 
Observe that  
\begin{align}\label{SA-Pi(omega)}
\big|\Pi_{S_A}(\boldsymbol {\omega})\big| = \big|\big\{ &\big(\pi(0), \pi(1),\ldots,\pi(2k)\big): 1\leq \pi(2i)\leq p, 1 \leq \pi(2i-1)\leq n \text{ for } i=0,1,\ldots,k,  \nonumber\\
 &\pi(0)=\pi(2k), \quad \xi_{\pi}(i)=\xi_{\pi}(j) \text{ if and only if }\boldsymbol {\omega}[i]=\boldsymbol {\omega}[j] , 1\leq i,j \leq 2k  \big\}\big|.
\end{align}
 As $p/n\rightarrow y>0$,
\begin{equation}\label{cardiality of word}
| \Pi_{S_A}(\boldsymbol {\omega})| = \mathcal{O}(p^{r+1}n^{b-r})\ \ \text{whenever} \ \omega \text{ has } \\   b \ \ \text{distinct letters and }(r+1) \ \ \text{ even generating vertices.}
\end{equation} 
As before, the existence of the following limit is tied very intimately to the LSD of $S_{A}$.  
\begin{align}\label{word limit}
\lim_{p,n \rightarrow \infty} \frac{| \Pi_{S_A}(\boldsymbol {\omega})|}{p^{r+1}n^{b-r}} \ \ \text{ as } p/n \rightarrow y. 
\end{align}
  In Section \ref{proofs-SA}, we identify the words for which the above limit is positive.
  \vskip3pt
  
As before we will use the moment method. The $L\acute{e}vy$ metric defined in Section \ref{preliminaries-smatrix} will be helpful in dealing with non-centered variables and truncation inequalities.

\begin{lemma}[Theorem A.38 in \citep{bai2010spectral}]\label{L-inequality-set}
Let $\lambda_k$ and $\delta_k$, $1 \leq k \leq n$ be two sets of complex numbers and $F$ and $\bar{F}$ denote their empirical distributions. Then for any $\alpha>0$,
\begin{equation}
L^{\alpha+1}(F,\bar{F})\leq \min_\pi \frac{1}{n} \displaystyle \sum_{k=1}^n |\lambda_k- \delta_{\pi(k)}|^{\alpha}
\end{equation}
where $L$ is the L\'{e}vy distance and $\pi=(\pi(1), \ldots, \pi(n))$ is any permutation of $1,2,\ldots,n$.
\end{lemma}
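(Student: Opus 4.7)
The plan is to prove the inequality for \emph{every} permutation $\pi$ simultaneously; the minimum on the right-hand side then drops out for free. The three ingredients are (i) a direct Lévy-distance estimate in terms of a fixed matching $\{(\lambda_k,\delta_{\pi(k)})\}$, (ii) a Markov-type tail bound controlling the number of badly-matched pairs, and (iii) a one-parameter optimisation that pulls out the exponent $\alpha+1$.

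First, I would fix any permutation $\pi$ and any threshold $\varepsilon>0$, and set $K_{\varepsilon}=\{k:|\lambda_k-\delta_{\pi(k)}|>\varepsilon\}$ with cardinality $m=|K_{\varepsilon}|$. If $k\notin K_{\varepsilon}$ and $\delta_{\pi(k)}\le x$, then $\lambda_k\le \delta_{\pi(k)}+\varepsilon\le x+\varepsilon$. Counting the $\delta_{\pi(k)}$'s lying in $(-\infty,x]$ — at most $m$ from $K_{\varepsilon}$ and at most $nF(x+\varepsilon)$ from $K_{\varepsilon}^{c}$ — gives $\bar F(x)\le F(x+\varepsilon)+m/n$. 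The symmetric computation yields $F(x)\le \bar F(x+\varepsilon)+m/n$. Setting $\eta:=\max(\varepsilon,m/n)$, both inequalities in the definition of the Lévy metric, $F(x-\eta)-\eta\le \bar F(x)\le F(x+\eta)+\eta$, hold for all $x$, so
\begin{equation*}
L(F,\bar F)\;\le\;\max\!\Bigl(\varepsilon,\;m/n\Bigr).
\end{equation*}

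Second, Markov's inequality applied to the numbers $|\lambda_k-\delta_{\pi(k)}|^{\alpha}$ produces
\begin{equation*}
m = \bigl|\{k:|\lambda_k-\delta_{\pi(k)}|^{\alpha}>\varepsilon^{\alpha}\}\bigr|\;\le\;\varepsilon^{-\alpha}\sum_{k}|\lambda_k-\delta_{\pi(k)}|^{\alpha}.
\end{equation*}
Writing $S_{\pi}=\tfrac{1}{n}\sum_{k}|\lambda_k-\delta_{\pi(k)}|^{\alpha}$ this reads $m/n\le S_{\pi}\varepsilon^{-\alpha}$, whence $L(F,\bar F)\le \max(\varepsilon, S_{\pi}\varepsilon^{-\alpha})$. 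The right-hand side, as a function of $\varepsilon>0$, is minimised when its two branches coincide, i.e.\ $\varepsilon^{\alpha+1}=S_{\pi}$; plugging $\varepsilon=S_{\pi}^{1/(\alpha+1)}$ back in gives $L(F,\bar F)\le S_{\pi}^{1/(\alpha+1)}$, equivalently $L^{\alpha+1}(F,\bar F)\le S_{\pi}$. Since $\pi$ was an arbitrary permutation, taking the minimum over $\pi$ yields the claim.

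The argument is conceptually simple once one spots the Lévy/pairing link in the first step; the only genuinely subtle point is cosmetic, namely that the lemma is stated for (possibly) complex $\lambda_k,\delta_k$ whereas the Lévy distance is defined for one-dimensional distribution functions. In the use made in this paper, all values are eigenvalues of real symmetric or Hermitian matrices and are in fact real, so the argument goes through verbatim; for genuinely complex data one would either work with the empirical distributions of $|\lambda_k|$ and $|\delta_k|$ (using $||\lambda_k|-|\delta_{\pi(k)}||\le|\lambda_k-\delta_{\pi(k)}|$ to import the bound), or invoke an analogous planar extension of the Lévy metric. In either case the Markov/balancing skeleton above is unchanged, and I do not foresee any serious further obstacle.
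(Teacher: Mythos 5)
Your argument is correct: the counting estimate $\bar F(x)\le F(x+\varepsilon)+m/n$ (and its symmetric counterpart), the Markov bound $m\le \varepsilon^{-\alpha}\sum_k|\lambda_k-\delta_{\pi(k)}|^{\alpha}$, and the balancing choice $\varepsilon=S_\pi^{1/(\alpha+1)}$ together give $L^{\alpha+1}(F,\bar F)\le S_\pi$ for every $\pi$, hence for the minimum. The paper itself supplies no proof of this lemma --- it is imported verbatim as Theorem A.38 of \citep{bai2010spectral} --- and your derivation is essentially the standard argument given there, with the caveat you correctly flag that for genuinely complex data one must use the two-dimensional (planar) L\'evy metric, which is immaterial here since all eigenvalues of $AA^T$ are real.
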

\begin{lemma}[Theorem A.37 in \citep{bai2010spectral}]\label{trace inequality}
Suppose $A$ and $B$ are real $p \times n$ matrices and $\lambda_k$ and $\delta_k$, $1 \leq k \leq p$ are the singular values of $A$ and $B$ arranged in descending order. Then,
\begin{equation}
\displaystyle \sum_{k=1}^{\min(p,n)} |\lambda_k- \delta_k|^2 \leq \Tr[(A-B)(A-B)^T].
\end{equation} 
\end{lemma}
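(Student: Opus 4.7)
The plan is to derive this inequality from the Hoffman--Wielandt inequality for real symmetric matrices via the standard symmetric dilation. Recall that Hoffman--Wielandt asserts: if $M_1, M_2$ are $N \times N$ real symmetric matrices with eigenvalues $\mu_1 \geq \cdots \geq \mu_N$ and $\nu_1 \geq \cdots \geq \nu_N$ respectively, then $\sum_{i=1}^N (\mu_i - \nu_i)^2 \leq \|M_1 - M_2\|_F^2$. I would prove this classical statement by writing $\|M_1 - M_2\|_F^2 = \mathrm{Tr}(M_1^2) + \mathrm{Tr}(M_2^2) - 2\,\mathrm{Tr}(M_1 M_2)$, then using the spectral decompositions to rewrite $\mathrm{Tr}(M_1 M_2) = \sum_{i,j} d_{ij}\mu_i \nu_j$ where $D = (d_{ij})$ is doubly stochastic (its entries are squared cosines between eigenvectors of $M_1$ and $M_2$), and finally appealing to Birkhoff's theorem combined with a rearrangement inequality to conclude that the bilinear form is maximized when eigenvalues are paired in monotone order.

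To pass from eigenvalues to singular values, I would invoke the Jordan--Wielandt dilation. For any $p \times n$ real matrix $A$, set
\[
\tilde{A} = \begin{pmatrix} 0 & A \\ A^T & 0 \end{pmatrix},
\]
a symmetric matrix of order $p+n$. The singular value decomposition of $A$ shows that the eigenvalues of $\tilde{A}$ are precisely $\pm \lambda_k$ for $k = 1, \ldots, q := \min(p,n)$, together with $|p-n|$ zeros. Define $\tilde{B}$ analogously; a direct block computation then yields
\[
\|\tilde{A} - \tilde{B}\|_F^2 = 2\|A - B\|_F^2 = 2\,\mathrm{Tr}\bigl[(A-B)(A-B)^T\bigr].
\]

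Applying Hoffman--Wielandt to $\tilde{A}$ and $\tilde{B}$ with eigenvalues listed in decreasing order produces matched pairs $(\lambda_k, \delta_k)$ in the positive part, $(0,0)$ in the middle, and $(-\lambda_{q-k+1}, -\delta_{q-k+1})$ in the negative part, so the left-hand side is at least $2 \sum_{k=1}^q (\lambda_k - \delta_k)^2$. Combining with the Frobenius identity above and dividing by two gives the claimed bound. The only real technical obstacle in this route is Hoffman--Wielandt itself; once that is secured, the dilation step is pure bookkeeping. An alternative would be to prove the singular-value stability directly via the Ky Fan dominance principle, but in practice such approaches also rely on Birkhoff--type convexity arguments and offer no genuine simplification.
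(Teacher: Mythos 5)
Your argument is correct: the Jordan--Wielandt dilation turns the singular-value statement into the Hoffman--Wielandt inequality for symmetric matrices, the eigenvalue bookkeeping ($\pm\lambda_k$ plus $|p-n|$ zeros, matched in monotone order) works out exactly as you describe, and the identity $\|\tilde{A}-\tilde{B}\|_F^2=2\Tr[(A-B)(A-B)^T]$ closes the loop. The paper does not prove this lemma at all --- it simply quotes Theorem A.37 of \citep{bai2010spectral} --- and your derivation is essentially the standard proof given in that reference, so there is nothing to reconcile.
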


The following inequality on the L\'{e}vy distance between the EESDs of two matrices can be easily deduced from the above lemmas.
\begin{lemma}\label{lem:Emetric}
Suppose $A$ and $B$ are real $p \times n$ matrices and $\mathbb{E}F^{S_A}$ and $\mathbb{E}F^{S_B}$ denote the EESDs of $AA^T$ and $BB^T$ respectively. Then the \textit{$L\acute{e}vy$ distance}, $L$ between these distributions satisfy the following inequality: 
\begin{equation}\label{levy inequality}
L^4(\mathbb{E}F^{S_A},\mathbb{E}F^{S_B})\leq \frac{2}{p^2}(\mathbb{E}\Tr(AA^T+BB^T))(\mathbb{E}\Tr[(A-B)(A-B)^T]).
\end{equation}
\end{lemma}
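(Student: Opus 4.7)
The plan is to follow the same algebraic route that gives Lemma \ref{lem:metric} and then upgrade the result to the EESDs by a coupling argument combined with Markov's inequality.

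With $\lambda_1 \geq \cdots \geq \lambda_p \geq 0$ and $\delta_1 \geq \cdots \geq \delta_p \geq 0$ denoting the ordered singular values of $A$ and $B$, the eigenvalues of $S_A$ and $S_B$ arranged in descending order are $\mu_k = \lambda_k^2$ and $\nu_k = \delta_k^2$. A pathwise application of Lemma \ref{L-inequality-set} with $\alpha = 1$, followed by the factorization $\mu_k - \nu_k = (\lambda_k - \delta_k)(\lambda_k + \delta_k)$ and Cauchy--Schwarz, gives
\begin{equation*}
L^2(F^{S_A}, F^{S_B}) \leq \frac{1}{p}\sum_{k=1}^p |\mu_k - \nu_k| \leq \frac{1}{p}\Big(\sum_{k=1}^p (\lambda_k - \delta_k)^2\Big)^{1/2}\Big(\sum_{k=1}^p (\lambda_k + \delta_k)^2\Big)^{1/2}.
\end{equation*}
By Lemma \ref{trace inequality} the first factor is at most $\sqrt{\Tr[(A - B)(A - B)^T]}$, and the estimate $(\lambda_k + \delta_k)^2 \leq 2(\lambda_k^2 + \delta_k^2)$ together with $\sum_k \lambda_k^2 = \Tr(AA^T)$ and $\sum_k \delta_k^2 = \Tr(BB^T)$ bounds the second factor by $\sqrt{2\,\Tr(AA^T + BB^T)}$. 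Squaring recovers Lemma \ref{lem:metric}.

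To lift this to the EESDs I would introduce the coupling $(X, Y)$ obtained by sampling $\omega$ from the underlying probability space and, independently, picking an index $K$ uniformly from $\{1,\ldots,p\}$, setting $X = \mu_K(\omega)$ and $Y = \nu_K(\omega)$. A direct check shows $X \sim \mathbb{E}F^{S_A}$ and $Y \sim \mathbb{E}F^{S_B}$. For any $\epsilon > 0$ this coupling satisfies
\begin{equation*}
\mathbb{E}F^{S_A}(x) = \mathbb{P}[X \leq x] \leq \mathbb{P}[Y \leq x + \epsilon] + \mathbb{P}[|X - Y| > \epsilon] \leq \mathbb{E}F^{S_B}(x + \epsilon) + \mathbb{P}[|X - Y| > \epsilon],
\end{equation*}
and symmetrically in the other direction, so whenever $\mathbb{P}[|X - Y| > \epsilon] \leq \epsilon$ one has $L(\mathbb{E}F^{S_A}, \mathbb{E}F^{S_B}) \leq \epsilon$. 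Markov's inequality gives $\mathbb{P}[|X - Y| > \epsilon] \leq \mathbb{E}|X - Y|/\epsilon$, so taking $\epsilon = \sqrt{\mathbb{E}|X - Y|}$ yields
\begin{equation*}
L^2(\mathbb{E}F^{S_A}, \mathbb{E}F^{S_B}) \leq \mathbb{E}|X - Y| = \frac{1}{p}\,\mathbb{E}\sum_{k=1}^p |\mu_k - \nu_k|.
\end{equation*}

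Combining this with the pathwise bound derived above and applying Cauchy--Schwarz in expectation, $\mathbb{E}\sqrt{UV} \leq \sqrt{\mathbb{E}U}\sqrt{\mathbb{E}V}$, gives
\begin{equation*}
L^2(\mathbb{E}F^{S_A}, \mathbb{E}F^{S_B}) \leq \frac{\sqrt{2}}{p}\sqrt{\mathbb{E}\Tr[(A-B)(A-B)^T]\cdot \mathbb{E}\Tr(AA^T + BB^T)},
\end{equation*}
and squaring concludes the proof. The delicate step is the transfer from ESDs to EESDs: naive convexity of the Lévy metric is not directly available, because for random $\epsilon_\omega = L(F^{S_A}(\omega), F^{S_B}(\omega))$ one cannot compare $F_\omega(x + \epsilon_\omega)$ to $F_\omega(x + \mathbb{E}\epsilon_\omega)$. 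The coupling plus Markov trick circumvents this cleanly (implicitly going through the Wasserstein-$1$ distance), while the two separate applications of Cauchy--Schwarz---one pathwise to split the singular-value difference, and one in expectation---are exactly what produce the product of expectations rather than the expectation of the product on the right-hand side.
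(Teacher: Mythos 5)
Your proof is correct and, in its main algebraic chain, follows the same route as the paper: reduce to $\frac{1}{p}\sum_k|\lambda_k^2-\delta_k^2|$, factor $\lambda_k^2-\delta_k^2=(\lambda_k-\delta_k)(\lambda_k+\delta_k)$, apply Cauchy--Schwarz together with $(\lambda_k+\delta_k)^2\leq 2(\lambda_k^2+\delta_k^2)$, and control $\sum_k(\lambda_k-\delta_k)^2$ by $\Tr[(A-B)(A-B)^T]$ via Lemma \ref{trace inequality}. (Your two-stage Cauchy--Schwarz, pathwise and then in expectation, lands on the same bound as the paper's single application on the product measure.) The genuine difference is the first step. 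The paper writes $L^2(\mathbb{E}F^{S_A},\mathbb{E}F^{S_B})\leq\frac{1}{p}\sum_i|\mathbb{E}\lambda_i^2-\mathbb{E}\delta_i^2|$ as a direct application of Lemma \ref{L-inequality-set}, which is stated for empirical distributions of deterministic sets of numbers; the EESD is a mixture of random empirical measures, not the empirical measure of $\{\mathbb{E}\lambda_i^2\}$, so that invocation is at best loose. Your coupling-plus-Markov argument (in effect the inequality $L^2\leq W_1$ applied to the explicit coupling that shares $\omega$ and a uniform index $K$) supplies exactly the missing justification and yields the same intermediate quantity $\frac{1}{p}\mathbb{E}\sum_k|\mu_k-\nu_k|$ as the paper's second displayed line. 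So your version is, if anything, the more carefully argued of the two; the only cosmetic caveat is that when $p>n$ the surplus eigenvalues of $AA^T$ and $BB^T$ are zero, so restricting Lemma \ref{trace inequality} to $\min(p,n)$ terms costs nothing---a bookkeeping point common to both proofs.
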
	
\begin{proof}
Let $\sigma_i(A), \sigma_i(B), i= 1,2 \ldots,p$ denote respectively the singular values of $A$ and $B$, each set arranged in descending order. Then with $\alpha=1$, from Lemma \ref{L-inequality-set}, we have,
\begin{align*}
L^2(\mathbb{E}F^{S_A},\mathbb{E}F^{S_B})  & \leq \frac{1}{p} \displaystyle \sum_{i=1}^p |\mathbb{E}\lambda_i^2- \mathbb{E}\delta_i^2| \nonumber\\ 
& \leq \frac{1}{p} \mathbb{E}\bigg[\displaystyle \sum_{i=1}^p\big|\lambda_i^2- \delta_i^2\big|\bigg]\\
& \leq \frac{1}{p}\bigg(\mathbb{E}\big[\displaystyle \sum_{i=1}^p (\lambda_i + \delta_i)^2 \big]\bigg)^{1/2} \bigg(\mathbb{E}\big[\displaystyle \sum_{i=1}^p |\lambda_i - \delta_i|^2 \big]\bigg)^{1/2}\\
&\leq \frac{1}{p}\bigg(2\mathbb{E}\big[\displaystyle \sum_{i=1}^p (\lambda_i^2 + \delta_i^2) \big]\bigg)^{1/2} \bigg(\mathbb{E}\big[\displaystyle \sum_{i=1}^p |\lambda_i - \delta_i|^2 \big]\bigg)^{1/2}.
\end{align*}
Now using Lemma \ref{trace inequality} on the second factor of the above inequality, we get \eqref{levy inequality}.
\end{proof}

\subsection{Proofs for $S_A$ matrices}\label{proofs-SA}
We look at the words that contribute to the limiting moments and determine their contribution for each of the matrices.

\subsubsection{$\lim_{n\to\infty}\frac{| \Pi(\boldsymbol {\omega})|}{p^{r+1}n^{b-r}}$ for $S_A$ matrices}

\begin{lemma}\label{lem:toe-S}
 Suppose $\boldsymbol {\omega}$ is a word with $b$ distinct letters and $(r+1)$ even generating vertices. Then $\displaystyle \lim_{n \rightarrow \infty}\frac{1}{p^{r+1}n^{b-r}} | \Pi_{S_{T^{(s)}}}(\boldsymbol {\omega})|= \alpha_{T^{(s)}}(\boldsymbol {\omega})>0$ if and only if $\boldsymbol {\omega}$ is an even word.
 \end{lemma}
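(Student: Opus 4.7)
The argument splits naturally into the two directions of the equivalence.

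\textbf{Necessity.} Suppose $\boldsymbol{\omega}$ is not even. Observation (i) gives the inclusion $\Pi_{S_{T^{(s)}}}(\boldsymbol{\omega}) \subset \Pi_{T^{(s)}}(\boldsymbol{\omega})$, where the right-hand class uses the symmetric Toeplitz link function with range $\{1,\ldots,\max(p,n)\}$. The asymptotic count for the (square) symmetric Toeplitz link (Section 4 of \citep{bose2021some}; also Chapter~3 of \citep{bose2018patterned}) says that $|\Pi_{T^{(s)}}(\boldsymbol{\omega})| = o(n^{b+1})$ whenever $\boldsymbol{\omega}$ is not even. Since $p \asymp n$, we get $p^{r+1}n^{b-r} \asymp n^{b+1}$ and hence $|\Pi_{S_{T^{(s)}}}(\boldsymbol{\omega})|/(p^{r+1}n^{b-r}) \to 0$.

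\textbf{Sufficiency.} Assume $\boldsymbol{\omega} \in E_b(2k)$ has $(r+1)$ even generating vertices. The $b+1$ generating vertices consist of $\pi(0)$ together with one new vertex per distinct letter; the even-indexed ones range over $\{1,\ldots,p\}$ and the odd-indexed ones over $\{1,\ldots,n\}$. For each letter $\ell$ first occurring at position $i_\ell$, any later occurrence $i$ of $\ell$ enforces
\[
\pi(i-1)-\pi(i) \;=\; \epsilon_i\,\bigl(\pi(i_\ell-1)-\pi(i_\ell)\bigr), \qquad \epsilon_i \in \{+1,-1\},
\]
by the symmetric Toeplitz matching $|\pi(i-1)-\pi(i)| = |\pi(i_\ell-1)-\pi(i_\ell)|$. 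Once a sign assignment $\eta = \{\epsilon_i\}$ is fixed, each non-generating vertex becomes an integer $\pm 1$-combination of the generating vertices. The closure $\pi(2k)=\pi(0)$, expanded as $\sum_\ell \bigl(\sum_{i:\boldsymbol{\omega}[i]=\ell}\epsilon_i\bigr)\,\delta_\ell = 0$ with $\delta_\ell = \pi(i_\ell)-\pi(i_\ell-1)$, is compatible with arbitrary generating vertices precisely when $\sum_{i:\boldsymbol{\omega}[i]=\ell}\epsilon_i = 0$ for every letter $\ell$; since $\boldsymbol{\omega}$ is even each letter appears $2k_\ell$ times and, with $\epsilon_{i_\ell} \equiv +1$, there are $\binom{2k_\ell - 1}{k_\ell}$ valid sign choices.

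Rescaling $u_i = \pi(i)/n$ sends even indices into $[0, p/n]\to[0,y]$ and odd ones into $[0,1]$, and the integer counting reduces to a Riemann sum. In the limit,
\[
\lim_{n\to\infty}\frac{|\Pi_{S_{T^{(s)}}}(\boldsymbol{\omega})|}{p^{r+1}n^{b-r}} \;=\; \alpha_{T^{(s)}}(\boldsymbol{\omega}) \;=\; \sum_{\eta}\int_{D_\eta(\boldsymbol{\omega})}\prod_{\ell\in\mathcal G}\,du_\ell,
\]
where $\eta$ ranges over valid sign patterns, $\mathcal G$ indexes the generating vertices, and $D_\eta(\boldsymbol{\omega}) \subset [0,y]^{r+1}\times[0,1]^{b-r}$ is the set of generating-coordinate tuples for which \emph{all} non-generating coordinates computed via $\eta$ lie in their correct intervals. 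To see that $\alpha_{T^{(s)}}(\boldsymbol{\omega}) > 0$, fix one valid $\eta$ and place every generating coordinate in a small neighbourhood of $\min(1,y)/2$; each non-generating coordinate is then an $O(1)$-bounded $\pm 1$ combination of coordinates close to this midpoint and therefore remains strictly inside its interval, so an open set of configurations lies in $D_\eta(\boldsymbol{\omega})$.

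The main obstacle will be the admissibility verification in the last step. When $y \neq 1$ the even and odd coordinate ranges differ, and the signed linear combinations imposed by $\eta$ could in principle push some non-generating coordinate outside its interval. One has to bound these deviations uniformly along the word, which I would do by localising the generating vertices in a sufficiently small neighbourhood of $\min(1,y)/2$ (where ``sufficiently small'' depends only on $k$ and $y$) and tracking the partial sums letter by letter. Once that estimate is in hand, openness of $D_\eta(\boldsymbol{\omega})$ and the existence of at least one valid $\eta$ combine to give $\alpha_{T^{(s)}}(\boldsymbol{\omega}) > 0$, completing the lemma.
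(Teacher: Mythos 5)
Your proof is correct and follows essentially the same route as the paper's: necessity via the inclusion of $\Pi_{S_{T^{(s)}}}(\boldsymbol{\omega})$ into the symmetric Toeplitz circuit class together with the known $o(n^{b+1})$ bound for non-even words, and sufficiency by decomposing over balanced sign patterns ($\binom{2k_\ell-1}{k_\ell}$ per letter, forced by the closure condition when the generating vertices are to remain free), writing the limit as a sum of polytope volumes, and proving positivity by localizing the generating coordinates (equivalently, making all increments $s_{i_j}$ of order $\epsilon n$) near the centre of the admissible box — which is precisely the paper's choice of $\epsilon$ with $Cb\epsilon<1/2$ and $v_0$ in a middle subinterval, split into the cases $y>1$ and $y\le 1$. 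The only blemish is a harmless normalization: your displayed volume formula computes $\lim |\Pi_{S_{T^{(s)}}}(\boldsymbol{\omega})|/n^{b+1}$ rather than $\lim |\Pi_{S_{T^{(s)}}}(\boldsymbol{\omega})|/(p^{r+1}n^{b-r})$, which differ by the constant factor $y^{r+1}$ and hence do not affect the positivity assertion.
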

 \begin{proof}
 First suppose $\boldsymbol {\omega} \in \mathcal{P}(2k) \setminus E_b(2k)$. Then from \eqref{SA-S} and Lemma 5.3 in \citep{bose2021some}, and using the fact that $p/n \rightarrow y>0$ as $n\rightarrow \infty$, it is easy to see that $$\displaystyle \lim_{n \rightarrow \infty}\frac{1}{p^{r+1}n^{b-r}} | \Pi_{S_{T^{(s)}}}(\boldsymbol {\omega})|=0.$$  
Now suppose $\boldsymbol{\omega}$ is an even word with $b$ distinct letter and $(r+1)$ even generating vertices. Let $i_1,i_2,\ldots,i_b$ be the positions where new letters made their first appearances. First we fix the generating vertices $\pi(i_j), 0 \leq j\leq b$ where $\pi(i_0)=\pi(0)$. Let
\begin{align*} 
s_i= \pi(i)- \pi(i-1)  \  \  \text{for }1 \leq i \leq 2k.
\end{align*}
Clearly, from \eqref{SA-link}, $\boldsymbol {\omega}[i]=\boldsymbol {\omega}[j]$ 
if and only if $\xi_{\pi}(i)=\xi_{\pi}(j)$. That is, 
$|s_i| =  |s_j|$, that is, $s_i - s_j  =  0$ or   $s_i  +  s_j  =  0.$
  Clearly $s_{i_1}=s_1$. If the first letter appears in the $j$-th position, then $s_j=s_1$ or $s_j=-s_1$. Similarly,  for every $i,\ 1 \leq i \leq 2k$, for some $j \in \{1,2,\ldots,b\}$,
\begin{equation}\label{Stplus_minus}
s_i = s_{i_j} \mbox{ or }	s_i =  - s_{i_j}.
\end{equation}

Thus, we have 
 \begin{align}\label{toe-relation}
 \pi(i)  &=  \pm(\pi(i_j)-\pi(i_j-1))+\pi(i-1)=\pm s_{i_j}+ \pi(i-1) \ \ \text{for some }j.
  \end{align}
  Let 
  $$
  v_{2i}=  \frac{\pi(2i)}{p}  \ \text{ for }0 \leq i \leq k \ \mbox{ and }\  
   v_{2i-1}= \frac{\pi(2i-1)}{n} \ \ \text{ for } \ 1 \leq i \leq k,
  $$
  $$
  u_{i}=  \frac{s_i}{n}   \text{ for }1 \leq i \leq 2k.
  $$
Let $y_n=p/n$. Now, $\pi(i)= \pi(i-1) \pm s_{i_j}$ whenever the $i$-th letter in $\boldsymbol{\omega}$ is same as the $j$-th distinct letter that appeared first at the $i_j$-th position. Therefore $v_i= \frac{1}{y_n}(v_{i-1} \pm u_{i_j})$ when $i$ is even and $v_i= {y_n}v_{i-1} \pm u_{i_j}$ when $i$ is odd. 
  
Let $$S=\{\pi(i_j):0 \leq  j \leq b\}\ \text{ and }\ S^{\prime}=\{i:\ \pi(i) \notin S\}.$$ 
That is, $S$ is the set of all distinct generating vertices and $S^{\prime}$ is the set of all indices of the non-generating vertices. We have the following claim.
\vskip3pt  
\noindent \textbf{Claim}: For any $1 \leq i \leq 2k$,
\begin{align}\label{toe-vrelation}
 v_i= \begin{cases} 
v_0+ \frac{1}{y_n} \sum_{j=1}^i \alpha_{ij}u_{i_j} & \ \text{if } i \ \ \text{ is even},\\
y_n v_0+  \sum_{j=1}^i \alpha_{ij}u_{i_j} & \ \text{if } i \ \ \text{ is odd}\end{cases}
\end{align}
where  $\alpha_{ij}$ depends on the choice of sign in  \eqref{toe-relation}.
\vskip3pt
\noindent We prove this by induction on $i$. We know that $\pi(1) \in S$. Clearly, $v_1=u_1 +y_n v_0$. Now either $\pi(2) \in S$ or $2 \in S^{\prime}$. If $\pi(2)\in S$, then $v_2=\frac{1}{y_n}(u_2-v_1)$ and $v_1=u_1-y_nv_0$. Therefore $v_2=\frac{1}{y_n}( u_2-u_1)+v_0$. If $\ 2\in S^{\prime}$, then
  $u_2= \pm u_1$  and $v_2= \frac{1}{y_n}(v_1 \pm u_1)$.
  So either $v_2=\frac{1}{y_n} (u_1 + y_nv_0 + u_1)$ or $ v_2=\frac{1}{y_n} (u_1 + y_nv_0 - u_1)=v_0$.
Hence the claim is true for $i=2$.

Now we assume that the claim is true for all $j<i$ and prove it for $i$. There are two cases:\\

\noindent Case 1: $i$ is even. Now either $\pi(i) \in S$ or $i \in S^{\prime}$. If $\pi(i) \in S$, then 
  \begin{align*}
  v_i&= \frac{1}{y_n} (u_i+v_{i-1})\\
 &= \frac{1}{y_n}(u_i+ y_nv_0+  \sum_{j=1}^{i-1} \alpha_{(i-1)j}u_{i_j})  \   \   \text{(by induction hypothesis as } i-1 \ \ \text{ is odd)}\\
 &=  v_0 + \frac{1}{y_n} \sum_{j=1}^{i} \alpha_{ij}u_{i_j}, 
  \end{align*}
where $\alpha_{ii}=1$.  If $i \in S^{\prime}$, then there exists $j$ such that $i_j<i$ and $u_i=\pm u_{i_j}$. Then either  $ v_i= \frac{1}{y_n}(v_{i-1}+ u_{i_j})$, or     $v_i= \frac{1}{y_n}(v_{i-1}- u_{i_j}).$
Hence either $$ v_i=v_0+ \frac{1}{y_n}(\sum_{j=1}^{i-1} \alpha_{(i-1)j}u_{i_j}  + u_{i_j}), \ \mbox{ or }\  v_i=v_0+  \frac{1}{y_n}(\sum_{j=1}^{i-1} \alpha_{(i-1)j}u_{i_j} - u_{i_j}).$$
  Therefore $v_i=v_0 + \frac{1}{y_n} \sum_{j=1}^{i} \alpha_{ij}u_{i_j}$  where  $\alpha_{ij}=\alpha_{(i-1)j}+1$ or $\alpha_{(i-1)j}-1$ (depending on the sign of the equation).\\ 
  
\noindent Case 2: $i$ is odd. Then using similar argument as above we can show that, $v_i= y_n v_0+  \sum_{j=1}^i \alpha_{ij}u_{i_j}$.

Thus the claim is proved.  
  
Let $u_S=\{u_i:\pi(i)\in S\}$ and $v_S=\{v_i:\pi(i)\in S\}$. From the previous claim, we have, for $1\leq i \leq 2k$,  
  \begin{equation*}
  v_i= \begin{cases}
  v_0+ \frac{1}{y_n}L_{i,u,n}^T(u_{S}),& \ \text{if } i \ \ \text{ is even},\\
y_n v_0+ L_{i,u}^T(u_{S})  & \ \text{if } i \ \ \text{ is odd},
  \end{cases}
  \end{equation*}
  where $L_{i,u,n}^T(u_{S})$ denotes a linear combination of $\{u_i:\pi(i)\in S\}$.

Also, for $1\leq i \leq 2k$, we have 
  \begin{equation}\label{linear-toe}
  v_i=  L_{i,n}^T(v_{S}),
  \end{equation}
  where $L_i^T(v_{S})$ denotes a linear combination of $\{v_i:\pi(i)\in S\}$ arising from \eqref{Stplus_minus}. 
 Now, the linear combinations vary depending on the sign chosen for each $s_i$. As we know, for each block of an even word, the number of positive and negative signs in the relations among the $s_i$'s (i.e., the equations like \eqref{Stplus_minus}) are equal. Therefore there are $\displaystyle \prod_{i=1}^b{ {k_i-1} \choose {\frac{k_i}{2}}}$ different sets of linear combinations corresponding to each word $\boldsymbol{\omega}$, where $k_1,\ldots,k_b$ are the block sizes of $\boldsymbol {\omega}$. 
  
Let $U_p=\{0,1/p,\ldots,(p-1)/p\},U_n=\{0,1/n,\ldots,(n-1)/n\}$. Then it is easy to see that for a word $\boldsymbol{\omega}$ of length $2k$, 
\begin{align*}
\big|\Pi_{S^{T^{(s)}}}(\boldsymbol{\omega})\big|= \big|\big\{& (v_0,v_1,\ldots,v_{2k}): v_{2i} \in U_p, v_{2i-1}\in U_n \text{ for } 0\leq i\leq k, v_0=v_{2k},\\
& L_{T^{(s)}}(v_{i-1},v_i)=L(v_{j-1},v_j) \text{ whenever } \boldsymbol{\omega}[i]=\boldsymbol{\omega}[j]\big \}\big|.
 \end{align*}
Hence
\begin{equation}\label{integral-gen}
  \displaystyle \lim_{n \rightarrow \infty}\frac{1}{p^{r+1}n^{b-r}} |\Pi_{S_{T^{(s)}}}(\boldsymbol{\omega})|= \sum_{L_{\boldsymbol{\omega}}^T}\int_{0}^{1} \int_{0}^{1}\int_{0}^{1} \cdots  \int_{0}^{1} \textbf{1}(0 \leq L_i^T(x_S) \leq 1,\ \forall \  i \in S^{\prime})\ dx_S,
\end{equation}
where $dx_S= \displaystyle \prod_{j=0}^{b}dx_{i_j}$  denotes the $(b+1)$-dimensional Lebesgue measure, $x_{i_0}=x_0$, and $L_{i}^T$ is the limit of $L_{i,n}^T$ as $p/n\rightarrow y$ and $\sum_{L_{\boldsymbol{\omega}}^T}$ is the sum over all the $\displaystyle \prod_{i=1}^b{ {k_i-1} \choose {\frac{k_i}{2}}}$ such different sets of linear combinations corresponding to $\boldsymbol{\omega}$.

As observed in Step 1 in Lemma 5.3 of \citep{bose2021some}, choosing $v_{i_j}, 0 \leq j \leq b$ freely is equivalent to choosing  $v_0$ and $u_{i_j},1 \leq j \leq b$ freely. Note that $-1 \leq u_{2i}\leq y_n$ and $-y_n \leq u_{2i-1}\leq 1$ for $0 \leq i \leq k$. Also by abuse of notation we denote the variables in the limit as $u_{i_j}$. 
 So,
  \begin{align}\label{toe-integral-exp}
  \displaystyle \lim_{n \rightarrow \infty}\frac{1}{p^{r+1}n^{b-r}} |\Pi_{S_{T^{(s)}}}(\boldsymbol{\omega})|=  \sum_{L_{\boldsymbol{\omega}}^T} &\int_{0}^{1} \int_{-1}^{y} \cdots  \int_{-y}^{1} 
   \textbf{1}(0 \leq x_0+\frac{1}{y}L_{i,u}^T(u_S) \leq 1,\ \forall \  2i \in S^{\prime})\nonumber \\
  &\textbf{1}(0 \leq y x_0+ L_{i,u}^T(u_S) \leq 1,\ \forall \  (2i-1) \in S^{\prime}) \ dx_0du_S,
\end{align}    
where $du_S= \displaystyle \prod_{j=1}^{b}du_{i_j}$  denotes the $(b+1)$-dimensional Lebesgue measure on $[-1,y]^{r}\times [-y,1]^{b-r}$.

  Suppose, a particular set of linear combinations $L_{i,u}^T$ is given,
  i.e., for $i \in S^{\prime}$, \eqref{toe-vrelation} holds and the values of $\alpha_{im},1 \leq i \leq 2k, 1\leq j \leq b$ are known. Here we show that the integral in \eqref{toe-integral-exp} is positive on a certain region in $[0,1] \times [-1,y]^{r}\times [-y,1]^{b-r}$. We divide this proof into two cases.\\
  
\noindent Case 1: $y>1$.  First let 
\begin{equation}\label{def-C}
C = \max \{|\alpha_{ij}|: 1 \leq j \leq b \text{ and } i \in S^{\prime} \}.
\end{equation}
Next we choose $\epsilon$ such that  $Cb\epsilon  <  1/2$. 
 Now, let $|u_{i_j}|< \epsilon$ for $1 \leq j \leq b$ and $\frac{Cb\epsilon}{y} \leq v_0 \leq \frac{1-Cb\epsilon}{y}$. 
 Then, for all $2i \in S^{\prime}$, $0 \leq v_0+ \frac{1}{y} L_{2i,u}^T (u_S) \leq 1$ and for all $2i-1 \in S^{\prime}$, $0 \leq y v_0+  L_{2i-1,u}^T (u_S) \leq 1$. Also the circuit condition is automatically satisfied. Note that as observed before we cannot choose the $s_{i_j}$'s freely in case of words that are not even.\\
 
 \noindent Case 2: $y\leq 1$.  First let $C$ be as in \eqref{def-C}. Next we choose $\epsilon$ such that  $\frac{Cb\epsilon}{y}  <  1/2$. 
 Now, let $|u_{i_j}|< \epsilon$ for $1 \leq j \leq b$ and $\frac{Cb\epsilon}{y} \leq v_0 \leq 1- \frac{Cb\epsilon}{y}$. 
 Then, for all $2i \in S^{\prime}$, $0 \leq v_0+ \frac{1}{y} L_{2i,u}^T (u_S) \leq 1$ and for all $2i-1 \in S^{\prime}$, $0 \leq y v_0+  L_{2i-1,u}^T (u_S) \leq 1$.

 Thus
$$\displaystyle \lim_{n \rightarrow \infty}\frac{1}{n^{b+1}} |\Pi_{S_{T^{(s)}}}(\boldsymbol {\omega})|=\alpha(\boldsymbol{\omega})> 0 \ \ \text{for any even word}\ \boldsymbol{\omega},$$ 
where $\alpha_{T^{(s)}}(\boldsymbol{\omega})$ is the sum of the integrals defined in \eqref{toe-integral-exp}.

 This completes the proof of the lemma. 
 \end{proof}
\begin{lemma}\label{lem:toe}
Suppose $\boldsymbol {\omega}$ is a word with $b$ distinct letters and $(r+1)$ even generating vertices. Then $\displaystyle \lim_{n \rightarrow \infty}\frac{1}{p^{r+1}n^{b-r}} | \Pi_{S_T}(\boldsymbol {\omega})|= \alpha_{T}(\boldsymbol {\omega})>0$ if and only if $\boldsymbol {\omega}$ is symmetric.
\end{lemma}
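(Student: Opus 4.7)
The plan is to adapt the strategy of Lemma \ref{lem:toe-S}, carefully exploiting the fact that the asymmetric link function $L_T(i,j) = i - j$ carries no absolute value. Writing $d_i = \pi(i) - \pi(i-1)$ and $e_i = (-1)^i d_i$, the identity $\xi_\pi(i) = \xi_\pi(j)$ is now equivalent to $e_i = e_j$ with no sign ambiguity at all. Consequently, within each block $V$ of $\boldsymbol{\omega}$, the values $e_i$ ($i \in V$) must share a common scalar $c_V$, so the parameters indexing $\Pi_{S_T}(\boldsymbol{\omega})$ reduce to $\pi(0)$ together with $\{c_V : V \in \boldsymbol{\omega}\}$, giving a priori $b+1$ degrees of freedom. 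The positions at even (resp.\ odd) indices have the usual ranges $1 \le \pi(2i) \le p$ and $1 \le \pi(2i-1) \le n$.

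The crucial new ingredient is the global circuit constraint $\pi(2k) = \pi(0)$. Summing the differences around the cycle,
\begin{equation*}
0 \;=\; \sum_{i=1}^{2k} d_i \;=\; \sum_{i=1}^{2k} (-1)^i e_i \;=\; \sum_V \bigl(|V_{e}| - |V_{o}|\bigr)\, c_V,
\end{equation*}
where $V_e$ and $V_o$ denote the even- and odd-indexed positions of block $V$. If $\boldsymbol{\omega}$ is symmetric, then $|V_e| = |V_o|$ for every block and this identity is vacuous; if $\boldsymbol{\omega}$ is not symmetric, at least one coefficient $|V_e| - |V_o|$ is nonzero, so we obtain a nontrivial linear relation among the $c_V$'s, cutting the admissible parameter set down by exactly one dimension.

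From here the counting mirrors Lemma \ref{lem:toe-S}. In the symmetric case, an argument analogous to \eqref{toe-integral-exp}---take $\pi(0)$ in the interior of $[1,p]$ and each $c_V$ in a neighbourhood of $0$ small compared to the uniform bound $C$ on the coefficients that express each non-generating $\pi(i)$ as a linear combination of the generators---shows that all range restrictions $1 \le \pi(2i) \le p$ and $1 \le \pi(2i-1) \le n$ are met on a set of positive $(b+1)$-dimensional Lebesgue measure, yielding $|\Pi_{S_T}(\boldsymbol{\omega})| = (1+o(1))\,\alpha_T(\boldsymbol{\omega})\, p^{r+1} n^{b-r}$ with $\alpha_T(\boldsymbol{\omega}) > 0$ given by the obvious integral. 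In the non-symmetric case, the loss of one dimension from the circuit relation gives $|\Pi_{S_T}(\boldsymbol{\omega})| = O(p^{r+1} n^{b-r-1})$, so the normalized limit vanishes.

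The main obstacle is the symmetric-case lower bound: one must exhibit a positive-measure open set of parameters on which every non-generating $\pi(i)$ falls inside its admissible range. Unlike in the symmetric Toeplitz setting there is no sign-flip freedom to absorb boundary issues, but once symmetry of $\boldsymbol{\omega}$ renders the circuit identity automatic, the same $\varepsilon$-neighbourhood construction used in Lemma \ref{lem:toe-S} applies verbatim, and the explicit integral representation of $\alpha_T(\boldsymbol{\omega})$ follows by the usual change of variables from the generating vertices to $\pi(0)$ and the $c_V$'s.
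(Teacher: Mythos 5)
Your proposal is correct and follows essentially the same route as the paper: the substitution $e_i=(-1)^i d_i$ is just a clean repackaging of the paper's parity-dependent sign rules for the $s_i=\pi(i)-\pi(i-1)$, and the core steps — reducing the parameters to $\pi(0)$ plus one increment per block, using the circuit identity $\sum_i d_i=0$ to force $|V_e|=|V_o|$ for every block (hence symmetry) when all generators are free, and importing the positive-measure $\varepsilon$-neighbourhood argument from Lemma \ref{lem:toe-S} for the symmetric case — coincide with the paper's proof.
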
 
 
 \begin{proof}
 
 Let $$s_i=\pi(i)-\pi(i-1), \ \ \text{ for } 1 \leq i \leq 2k.$$
 From \eqref{SA-link}, we know that $\boldsymbol {\omega}[i]= \boldsymbol {\omega}[j]$ if and only if $\xi_{\pi}(i)= \xi_{\pi}(j)$. This implies
 \begin{align}\label{T-relation}
 s_i= & s_j \ \ \text{ when } i \ \text{ and } j \ \text{ are of same parity}, \nonumber \\
  s_i= & -s_j \ \ \text{ when } i \ \text{ and } j \ \text{ are of opposite parity}.
 \end{align}
Now we fix an $\boldsymbol {\omega}$ with $b$ distinct letters which appear at $i_1,i_2,\ldots,i_b$ positions for the first time. Also let  $\boldsymbol {\omega}$ have $(r+1)$ even generating vertices. Using the same arguments as in Step 1 of Lemma 5.3 of \citep{bose2021some}, being able to choose $\pi(i_j), 0 \leq j \leq b$ is equivalent to choosing $\pi(0), s_{i_j}, 1\leq j \leq b$ freely. Next we show that if $\pi(0)$ and $s_{i_j}, 1 \leq j \leq b$ can be chosen freely, then the word is symmetric.

To see this, observe that the circuit condition gives 
\begin{equation}\label{T-s-relation}
\displaystyle \sum_{i=1}^{2k}s_i=\pi(0)-\pi(2k)= 0.
\end{equation}
Using \eqref{T-relation}, we see that there exists $\alpha_j, 1\leq j \leq b$ such that $$\displaystyle \sum_{i=1}^b\alpha_js_{i_j}=0.$$
Since we need the $s_{i_j}$'s to have free choice, we must have $\alpha_j=0$ for all $j \in \{1,2,\ldots,b\}$.	 Therefore for each $j$, 
 \begin{align}\label{T-s-relation1}
\big|\{l:s_l=s_{i_j}\}\big|=\big|\{l:s_l=-s_{i_j}\}\big|.
 \end{align} 
 Now from the definition of $\xi_{\pi}$, $\xi_{\pi}(2i)= s_{2i}, 0 \leq i \leq k$ and  $\xi_{\pi}(2i-1)= -s_{2i-1}, 0 \leq i \leq k$. Therefore for each $j \in \{1,2 , \ldots, b\}$, to satisfy \eqref{T-s-relation1}, we must have  
 \begin{align*}
\big|\big\{l:l \text{ even and }\xi_{\pi}(l)=\xi_{\pi}(i_j) \big\}\big|=\big|\big\{l:l \text{ odd and }\xi_{\pi}(l)=\xi_{\pi}(i_j)\big\}\big|.
\end{align*} 
That is, each letter appears equal number of times at odd and even places. Hence the word is symmetric.

Now if $\boldsymbol {\omega}$ is not symmetric, at least one of the generating vertices is a linear combination of the others, and hence cannot be chosen freely. So, 
$$\displaystyle \lim_{n \rightarrow \infty}\frac{1}{p^{r+1}n^{b-r}} | \Pi_{S_T}(\boldsymbol {\omega})|=0\ \ \text{if}\ \boldsymbol {\omega}\  \text{is not symmetric}.$$

Next, suppose $\boldsymbol{\omega}$ is a symmetric word with $b$ distinct letters and $(r+1)$ even generating vertices. We shall show that $\displaystyle \lim_{n \rightarrow \infty}\frac{1}{p^{r+1}n^{b-r}} | \Pi_{S_T}(\boldsymbol {\omega})|=\alpha_T(\boldsymbol {\omega})>0$.

Suppose the letters make their first appearances at $i_1,i_2, \ldots,i_b$ positions in $\boldsymbol{\omega}$. First we fix the generating vertices $\pi(i_j), 0 \leq j \leq b$. Suppose $S=\{\pi(i_j):0 \leq  j \leq b\}  \ \mbox{ and }\ S^{\prime}=\{i : \pi(i) \notin S\}.$ For $i\in S^{\prime}$, $\xi_{\pi}(i)= \xi_{\pi}(i_j)$ for some $j \in \{1,2, \ldots b\}$. Then  
\begin{align}\label{T-pi-relation}
\pi(i)=&  s_{i_j} + \pi(i-1) \ \ \text{ if } i \ \text{ and } i_j \  \text{ are of same parity}\nonumber \\
\pi(i)=&  -s_{i_j} + \pi(i-1) \ \ \text{ if } i \ \text{ and } i_j \  \text{ are of opposite parity}.
\end{align}
Thus, \eqref{T-pi-relation} is nothing but \eqref{toe-relation} where the sign has been determined depending on the parity of $i$ and $i_j$. Hence, for $1 \leq i \leq 2k$, $v_{i}=L_{i,n}^{T}(v_S)$, (the notations are the same as in the proof of Lemma \ref{lem:toe-S}) where $L_{i,n}^{T}(\cdot)$ is a particular set of linear combinations that has been determined by \eqref{T-pi-relation}. As a result, the rest of the proof is same as that of Lemma \ref{lem:toe-S}. Therefore, 
\begin{equation}\label{T-integral-gen}
  \displaystyle \lim_{n \rightarrow \infty}\frac{1}{p^{r+1}n^{b-r}} |\Pi_{S_{T}}(\boldsymbol{\omega})|= \int_{0}^{1} \int_{0}^{1}\int_{0}^{1} \cdots  \int_{0}^{1} \textbf{1}(0 \leq L_i^T(x_S) \leq 1,\ \forall \  i \in S^{\prime})\ dx_S,
\end{equation}
where $dx_S= \displaystyle \prod_{j=0}^{b}dx_{i_j}$  denotes the $(b+1)$-dimensional Lebesgue measure, $x_{i_0}=x_0$.

That the integral is positive now follows from the proof of the same fact in Lemma \ref{lem:toe-S}. Thus, 
$$\displaystyle \lim_{n \rightarrow \infty}\frac{1}{p^{r+1}n^{b-r}} |\Pi_{S_{T}}(\boldsymbol{\omega})|=\alpha_T(\boldsymbol{\omega})>0 \ \ \text{ for every symmetric word } \boldsymbol{\omega},$$
where $\alpha_T$ is the value of an individual integral in the rhs of \eqref{toe-integral-exp}.
\end{proof}

 \begin{lemma}\label{lem:han}
 Suppose $\boldsymbol {\omega}$ is a word with $b$ distinct letters and $(r+1)$ even generating vertices. Then 
 \begin{itemize}
 \item[(i)]$\displaystyle \lim_{n \rightarrow \infty}\frac{1}{p^{r+1}n^{b-r}} | \Pi_{S_{H^{(s)}}}(\boldsymbol {\omega})|= \alpha_{H^{(s)}}(\boldsymbol {\omega})>0$ if and only if $\boldsymbol {\omega}$ is a symmetric word.
 \item[(ii)] $\displaystyle \lim_{n \rightarrow \infty}\frac{1}{p^{r+1}n^{b-r}} | \Pi_{S_H}(\boldsymbol {\omega})|= \alpha_{H}(\boldsymbol {\omega})$ can only be positive if $\boldsymbol {\omega}$ is a symmetric word. In case of a symmetric word the value of $\alpha_{H}(\boldsymbol {\omega})$ is determined by an integral given in \eqref{H-integral}.
 \end{itemize}
 \end{lemma}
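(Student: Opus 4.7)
The strategy closely parallels that of Lemmas \ref{lem:toe-S} and \ref{lem:toe} for the Toeplitz matrices, but with the additive link $L_{H^{(s)}}(i,j)=i+j$ replacing the subtractive link $|i-j|$. Define $s_i=\pi(i)+\pi(i-1)$ for $1\le i\le 2k$. For the symmetric Hankel link, $\xi_\pi(i)=\xi_\pi(j)$ if and only if $s_i=s_j$; for the asymmetric Hankel link, one additionally needs $\sgn(\pi(i-1)-\pi(i))=\sgn(\pi(j-1)-\pi(j))$ in order to obtain equality of the signed labels. As in the Toeplitz analysis, fixing the generating vertices $\pi(i_j),\ 0\le j\le b$, is equivalent to prescribing $\pi(0)$ together with the $b$ distinct values $s_{i_1},\dots,s_{i_b}$.

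For part (i), the recurrence $\pi(i)=s_i-\pi(i-1)$ telescopes to
\[
\pi(i)=(-1)^i\pi(0)+\sum_{\ell=1}^{i}(-1)^{i-\ell}s_\ell,
\]
so the circuit condition $\pi(2k)=\pi(0)$ becomes $\sum_{\ell=1}^{2k}(-1)^\ell s_\ell=0$. Grouping positions by letter, this reads $\sum_{j=1}^b\alpha_j s_{i_j}=0$, where $\alpha_j$ equals the number of even positions of the $j$-th letter minus the number of odd positions. For all $s_{i_j}$ to be freely choosable one must have $\alpha_j=0$ for every $j$, which is exactly the definition of a symmetric word. Conversely, when $\boldsymbol{\omega}$ is symmetric one rescales $v_0=\pi(0)/p$ and $u_{i_j}=s_{i_j}/n$, expresses every non-generating $v_i$ as an explicit affine combination of $v_0$ and $u_S$ dictated by the recurrence above, and shows by the same reasoning used in Lemma \ref{lem:toe-S} (choose $|u_{i_j}|<\epsilon$ small and $v_0$ in a narrow window around $1/2$) that the limiting volume is a strictly positive integral $\alpha_{H^{(s)}}(\boldsymbol{\omega})>0$.

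For part (ii), the algebraic matching $s_i=s_{i_j}$ produces exactly the same linear constraint on the generating $s_{i_j}$'s, so symmetry of $\boldsymbol{\omega}$ remains necessary for free choice, yielding the required implication. However, the extra sign requirement in $L_H$ imposes additional indicator factors of the form $\mathbf{1}\{\sgn(v_{i-1}-v_i)=\sgn(v_{i_j-1}-v_{i_j})\}$ on the region of integration, and the resulting formula for $\alpha_H(\boldsymbol{\omega})$, displayed as \eqref{H-integral}, need not be positive on every symmetric word: if the required signs cannot be reconciled on a positive-measure subset of the $(v_0,u_S)$-cube, the integral collapses to zero. This accounts for the one-sided formulation in (ii).

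The main technical obstacle lies in part (ii): one must verify that after passing to the scaling limit $p/n\to y$ the sign restrictions translate cleanly into measurable indicator factors over the limiting rectangle $[0,1]\times[-(1+y),1+y]^b$, and then write \eqref{H-integral} as a finite sum of Lebesgue integrals of such indicator products (one summand per admissible sign pattern of the non-generating vertices). Once this accounting is done, the remainder of the argument is a direct adaptation of the Toeplitz proofs together with the alternating-sign recurrence derived above.
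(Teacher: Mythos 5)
Your overall skeleton matches the paper's: reduce to the additive labels $t_i=\pi(i)+\pi(i-1)$, show non-symmetric words contribute zero, express the non-generating vertices as affine functions of the generating data, pass to a volume integral, and append sign indicators for the asymmetric link. Your necessity argument --- extracting the relation $\sum_j\alpha_j s_{i_j}=0$ from the circuit condition $\pi(2k)=\pi(0)$ and observing that a non-symmetric word forces some $\alpha_j\neq 0$, hence a lost degree of freedom --- is a legitimate alternative to the paper, which instead deduces the zero limit from the containment \eqref{SA-S} together with the corresponding result for the symmetric Hankel matrix (Lemma 5.4 of \citep{bose2021some}); your route is self-contained and is in fact the same device the paper deploys for the asymmetric Toeplitz in Lemma \ref{lem:toe}.

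The step that fails as written is the positivity argument in part (i). For the Hankel link the generating increments are sums: $s_{i_1}=\pi(1)+\pi(0)\ge\pi(0)$, so with $v_0$ near $1/2$ you get $u_{i_1}=s_{i_1}/n\ge y\,v_0\approx y/2$, and the region $\{|u_{i_j}|<\epsilon,\ v_0\approx 1/2\}$ over which you propose to integrate is empty. The small-box argument of Lemma \ref{lem:toe-S} works there precisely because the Toeplitz increments $\pi(i)-\pi(i-1)$ can be made small; sums of adjacent vertices cannot. The paper repairs this by first centering, $x_i=v_i-\tfrac12$, and then applying the alternating reflection $(x_0,x_1,x_2,x_3,\ldots)\mapsto(x_0,-x_1,x_2,-x_3,\ldots)$, which converts the sums $p_i$ into differences $q_i$ that can legitimately be confined to a small neighbourhood of zero, after which the Toeplitz positivity argument applies verbatim; equivalently, one may centre your box for the $u_{i_j}$ at the midpoint of their admissible range rather than at $0$. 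Separately, in part (ii) the sign-matching condition flips according to the parity of the two positions being compared (compare the cases $C_{i_j}^{e}$ and $C_{i_j}^{o}$ in \eqref{H-sgn-indicator}); your single unparitied condition would yield the wrong indicator factors in \eqref{H-integral}. Neither repair changes the architecture of your proof.
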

 \begin{proof}
  First suppose $\boldsymbol {\omega} \in \mathcal{P}(2k) \setminus S_b(2k)$. Then from \eqref{SA-S} and Lemma 5.4 in \citep{bose2021some}, it is easy to see that $$\displaystyle \lim_{n \rightarrow \infty}\frac{1}{p^{r+1}n^{b-r}} | \Pi_{S_{H^{(s)}}}(\boldsymbol {\omega})|=\lim_{n \rightarrow \infty}\frac{1}{p^{r+1}n^{b-r}} | \Pi_{S_{H}}(\boldsymbol {\omega})|= 0.$$  
Now suppose $\boldsymbol{\omega}$ is a symmetric word with $b$ distinct letters and $(r+1)$ even generating vertices. Suppose $i_1,i_2,\ldots,i_b$ are the positions where new letters made their first appearances. First we fix the generating vertices $\pi(i_j), 0 \leq j\leq b$ where $\pi(i_0)=\pi(0)$. Let 
\begin{align*}
t_i= \pi(i)+ \pi(i-1)  \text{ for } 1 \leq i \leq 2k.
\end{align*}
Now let us first consider the symmetric Hankel link function. Clearly from \eqref{SA-link}, $\boldsymbol {\omega}[i]=\boldsymbol {\omega}[j]$ 
 if and only if  $t_i -  t_j = 0$. Further $t_{i_1}=t_1$. If the first letter again appears at the $j$-th position, then $t_j = t_1$. Similarly,  for every $i,\ 1 \leq i \leq 2k$, 
\begin{equation}\label{S-han1}
t_i = t_{i_j}\ \ \text{ for some }j \in \{1,2,\ldots,b\}.
\end{equation}

First we fix the generating vertices $\pi(i_j),\ j=0,1,2,\ldots,b$.
 Let 
  $$
  v_{2i}=  \frac{\pi(i)}{p},\ \ v_{2i-1}=  \frac{\pi(i)}{n}  \text{ for }0 \le i \leq k,\ \\
   S=\{\pi(i_j):0 \leq  j \leq b\}  \ \mbox{ and }\ S^{\prime}=\{i : \pi(i) \notin S\}.$$
   For $1\leq i \leq 2k$, from the link function and the formula for $t_i$ we have 
  \begin{equation}\label{linear-han}
  v_i= L_{i,n}^H(v_{S}),
  \end{equation}
  where $L_{i,n}^H(v_{S})$ denotes a linear combination of $\{v_i:\pi(i)\in S\}$.  
  
 Let $U_p=\{0,1/,\ldots,(p-1)/p\}$ and $U_n=\{0,1/n,\ldots,(n-1)/n\}$. From \eqref{SA-Pi(omega)}, it is easy to see that for a word $\boldsymbol{\omega}$ of length $2k$, 
\begin{align*}
\big|\Pi_{S_{H^{(s)}}}(\boldsymbol{\omega})\big|= \big|\big\{ (v_0,v_1,\ldots,v_{2k}):v_{2i}\in U_p v_{2i-1}\in U_n \text{ for } 0\leq i\leq k, v_0=v_{2k},
v_i= L_{i,n}^H(v_S)\big \}\big|.
 \end{align*}
Transforming $v_i \mapsto x_i= v_i -\frac{1}{2}$, we get that 
\begin{align*}
\big|\Pi_{S_{H^{(s)}}}(\boldsymbol{\omega})\big|= \big|\big\{ &(x_0,x_1,\ldots,x_{2k}):  \ x_{2i}\in \{-1/2,-1/2+1/p, \ldots,-1/2+(p-1)/p\},\\
&x_{2i-1}\in \{-1/2,-1/2+1/n, \ldots,-1/2+(n-1)/n\} \text{ for } 0\leq i\leq k,\\
&\quad x_0=x_{2k} \mbox{ and }
x_i= L_{i,n}^H(y_S)\big \}\big|.\end{align*}  
Hence
 \begin{equation}\label{S-han2}
  \displaystyle \lim_{n \rightarrow \infty}\frac{1}{p^{r+1}n^{b-r}}\Pi_{S_{H^{(s)}}}(\boldsymbol{\omega})= \int_{-1/2}^{1/2} \int_{-1/2}^{1/2}\int_{-1/2}^{1/2} \cdots  \int_{-1/2}^{1/2} \textbf{1}(-1/2 \leq L_i^H(x_S) \leq 1/2,\ \forall\ i \in S^{\prime})\ dx_S,
\end{equation}
where $dx_S= \displaystyle \prod_{j=0}^{b}dx_{i_j}$  denotes the $(b+1)$-dimensional Lebesgue measure on $[-\frac{1}{2},\frac{1}{2}]^{b+1}$.

Let $y_n=p/n$ and for $1 \leq i \leq k$,
\begin{align}\label{p-relation}
p_{2i}=& x_{2i-1}+ y_n x_{2i}, \nonumber\\
 p_{2i-1}=& y_n x_{2i-2}+  x_{2i-1},
\end{align}
 and 
\begin{align}\label{q-relation}
q_{2i}=& x_{2i-1}- y_n x_{2i},\nonumber \\
 q_{2i-1}=& y_n x_{2i-2}-  x_{2i-1}.
\end{align} 
 Now we have the following claim.

\vskip3pt  
\noindent \textbf{Claim}: For any $1\leq i \leq 2k$,
$$x_i= \begin{cases}
x_0+\frac{1}{y_n}  \sum_{j=1}^{i} \alpha_{ij}p_{i_j} & \text{ if } \ i \ \mbox{ is  even},\\
 - y_n x_0+  \sum_{j=1}^{i} \alpha_{ij}p_{i_j} & \text{ if }\  i \  \text{ is odd}.
\end{cases} $$
\vskip3pt
\noindent We prove this by induction in $i$. 
We know that $\pi(1) \in S$. Clearly, $x_1=p_1 -y_n x_0$. Now either $\pi(2) \in S$ or $2 \in S^{\prime}$. If $\pi(2)\in S$, then $x_2=\frac{1}{y_n}(p_2-x_1)$. Therefore $x_2= \frac{1}{y_n}(p_2-p_1) + x_0$. If  $2\in S^{\prime}$, then
  $p_2=  p_1$  and $x_2= \frac{1}{y_n}(p_1 -y_1)=y_0$.
So the claim is true for $i=2$.
  
  Now we assume that the claim is true for all $j<i$ and try to prove it for $i$.
Then either $\pi(i) \in S$ or $i \in S^{\prime}$. 

\noindent (a) If $\pi(i) \in S$ and $i$ is even, then $y_i= \frac{1}{y_n}(p_i-y_{i-1})$. Now, $i-1$ is odd and hence $y_{i-1}= -y_n y_0+ \displaystyle \sum_{j=1}^{i-1}\alpha_{(i-1)j}p_{i_j}$ by induction hypothesis. Therefore $y_i= y_0+\frac{1}{y_n} \displaystyle \sum_{j=1}^{i}\alpha_{ij}p_{i_j}$ where $\alpha_{ii}=1$. The case where $i$ is odd can be tackled similarly.

\noindent (b)  If $i \in S^{\prime}$, then there exists $m$ such that $i_m<i$ and $p_i=p_{i_m}$. Now if $i$ is even, $y_i=\frac{1}{y_n}(p_{i_m}-y_{i-1})$. As $i-1$ is odd, $y_{i-1}= -y_n y_0+ \displaystyle \sum_{j=1}^{i-1}\alpha_{(i-1)j}p_{i_j}$ and therefore $y_i= y_0+ \frac{1}{y_n}\displaystyle \sum_{j=1}^{i-1}\alpha_{ij}p_{i_j}$ where $\alpha_{im}=\alpha_{(i-1)m}+1$. The case where $i$ is odd can be tackled similarly.
   
Thus the claim is proved.
\vskip3pt

Now we perform the following change of variables in \eqref{S-han2}:
\begin{align*}
(x_0,x_1,x_2,x_3,\ldots,x_{2k}) \longrightarrow (x_0,-x_1,x_2,-x_3,\ldots,x_{2k})=(z_0,z_1,z_2,z_3,\ldots,z_{2k}) \ (\text{ say }) .
\end{align*}
Observe that this transformation does not alter the value of the integral in \eqref{S-han2}. Also observe that using \eqref{p-relation} and \eqref{q-relation}, under this transformation, 
\begin{align*}
(p_1,p_2,p_3,\ldots,p_{2k}) \longrightarrow (q_1,-q_2,q_3,\ldots,-q_{2k}).
\end{align*}
Then from the claim it follows that $$z_i= \begin{cases}
z_0+ \frac{1}{y_n}\displaystyle \sum_{j=1}^{i}\beta_{ij}q_{i_j} &  \ \text{ if } i \ \text{ is even},\\
y_n z_0+ \displaystyle \sum_{j=1}^{i}\beta_{ij}q_{i_j} &  \ \text{ if } i \ \text{ is odd}
\end{cases}$$ 
where $\beta_{ij}=\pm \alpha_{ij}$ according as $i_j$ is odd or even.  We shall use the notation $z_i=L_{i,q,n}^H(z_S)$ to denote this linear relation.

Also note that choosing $x_{i_j}, 0 \leq j \leq b$ freely is equivalent to choosing $p_{i_j}, 0 \leq j \leq b $ (where $p_{i_0}=x_0$). Further, $-\frac{y_n+1}{2}\leq q_{i_j} \leq  \frac{y_n+1}{2}$. Therefore we can write \eqref{S-han2} as 
\begin{align*}
 \displaystyle \lim_{n \rightarrow \infty}\frac{1}{p^{r+1}n^{b-r}}\Pi_{S_{H^{(s)}}}(\boldsymbol{\omega})= \int_{-1/2}^{1/2} \int_{-\frac{y+1}{2}}^{\frac{y+1}{2}}\int_{-\frac{y+1}{2}}^{\frac{y+1}{2}} \cdots  \int_{-\frac{y+1}{2}}^{\frac{y+1}{2}} \textbf{1}(-1/2 \leq L_{i,q}^H(z_S) \leq 1/2,\ \forall\ i \in S^{\prime})\ dq_S,
\end{align*}
where $dq_S= \displaystyle \prod_{j=0}^{b}dq_{i_j}$  denotes the $(b+1)$-dimensional Lebesgue measure on $[-\frac{1}{2},\frac{1}{2}]\times [-\frac{y+1}{2},\frac{y+1}{2}]^{b}$ and $L_{i,q}^H$ denotes the limit of the linear combination $L_{i,q,n}^H$ as $y_n \rightarrow y>0$.

Now it can be proved that the above integrand is positive on a region of positive measure on $[-\frac{1}{2},\frac{1}{2}]\times [-\frac{y+1}{2},\frac{y+1}{2}]^{b}$- the proof is similar to the proof that the integral in the rhs of \eqref{toe-integral-exp} is positive. So we omit the details.

This prove part (i) of the lemma. \\

\vskip3pt 

%

\noindent To prove part (ii), note that for the asymmetric Hankel link function, 
\begin{align*}
& \xi_{\pi}(i)= \xi_{\pi}(j) \ \ \text{ if and only if }\ \ t_i=t_j \ \ \text{ and }\\
&  \sgn(\pi(i)-\pi(i-1))= \sgn(\pi(j)-\pi(j-1)) \ \ \text{ if } i \ \text{ and } j \ \text{ are of same parity, or}\\
& \sgn(\pi(i)-\pi(i-1))= \sgn(\pi(j-1)-\pi(j)) \ \ \text{ if } i \ \text{ and } j \ \text{ are of opposite parity}.
\end{align*}
Let 
\begin{equation}\label{def-even gen}
\mathcal{E}_{\boldsymbol{\omega}}= \{0,i_j; i_j \ \text{ is even }, 1\leq j \leq b\}
\end{equation}
and 
\begin{equation}\label{def-odd gen}
\mathcal{O}_{\boldsymbol{\omega}}= \{i_j; i_j \ \text{ is odd }, 1\leq j \leq b\}.
\end{equation}
  For every $j \in \{1,2, \ldots, b\}$, let 
  \begin{align}\label{def-odd-even}
  C_{i_j}^{o} & = \{i;   \xi_{\pi}(i)= \xi_{\pi}(i_j), i,i_j \ \text{ are of opposite parity },0 \leq i \leq 2k \}\\
  C_{i_j}^{e} & = \{i;  \xi_{\pi}(i)= \xi_{\pi}(i_j),i,i_j \ \text{ are of same parity },0 \leq i \leq 2k \}
  \end{align}
  Using the notations as in the proof of part (i), we now have that 
\begin{align*}
\big|\Pi_{S_{H}}(\boldsymbol{\omega})\big|= \big|\big\{ & (v_0,v_1,\ldots,v_{2k}):v_{2i}\in U_p, v_{2i-1}\in U_n \text{ for } 0\leq i\leq k, v_0=v_{2k},
v_i= L_{i,n}^H(v_S),\\
& \sgn(y_n L_{i,n}^H(v_S)- L_{i-1,n}^H(v_S))= \sgn(y_n v_{i_j}- L_{i-1,n}^H(v_S))\ \text{ when } i_j\in \mathcal{E}_{\boldsymbol{\omega}} \ \text{ and } i \in C_{i_j}^{e} \ \\
&\text{ or } \sgn(y_nL_{i-1,n}^H(v_S)- L_{i,n}^H(v_S) )= \sgn(y_n v_{i_j}- L_{i_j-1,n}^H(v_S))\ \text{ when } i_j\in \mathcal{E}_{\boldsymbol{\omega}} \ \text{ and } i \in C_{i_j}^{o},\\
& \sgn(y_n L_{i,n}^H(v_S)- L_{i-1,n}^H(v_S))= \sgn(y_n L_{i_j-1,n}^H(v_S)-  v_{i_j-1} )\ \text{ when } i_j\in \mathcal{O}_{\boldsymbol{\omega}} \ \text{ and } i \in C_{i_j}^{e} \ \\
&\text{ or } \sgn(y_nL_{i-1,n}^H(v_S)-  L_{i,n}^H(v_S) )= \sgn(y_n v_{i_j}- L_{i-1,n}^H(v_S))\ \text{ when } i_j\in \mathcal{O}_{\boldsymbol{\omega}} \ \text{ and } i \in C_{i_j}^{o}  \big \}\big|.
 \end{align*}
 As $\Pi_{S_{H}}(\boldsymbol{\omega})\subset \Pi_{S_{H^{(s)}}}(\boldsymbol{\omega})$. If $\boldsymbol {\omega}$ is a word with $b$ distinct letters but not symmetric, by part (i), $\frac{1}{p^{r+1}b^{n-r}}\big|\Pi_{S_{H}}(\boldsymbol{\omega})\big| \rightarrow 0$ as $n \rightarrow \infty$.
 
Next let $\boldsymbol {\omega} \in S_b(2k)$ with $(r+1)$ even generating vertices. Clearly for $\boldsymbol {\omega}$, $|\mathcal{E}_{\boldsymbol{\omega}}|=r+1$ and $|\mathcal{O}_{\boldsymbol{\omega}}|=b-r$. Now suppose, 
\begin{align}\label{H-sgn-indicator}
f_{n}^H(v_S)= \displaystyle\prod_{j=1}^b \bigg[ \prod_{i_j \in \mathcal{E}_{\boldsymbol{\omega}}} &\big (\prod_{i \in C_{i_j}^e}\textbf{1}(\sgn(y_n L_{i,n}^H(v_S)- L_{i-1,n}^H(v_S))= \sgn(y_n v_{i_j}- L_{i-1,n}^H(v_S)))\nonumber\\
&\prod_{i \in C_{i_j}^o} \textbf{1}(\sgn(y_nL_{i-1,n}^H(v_S)- L_{i,n}^H(v_S) )= \sgn(y_n v_{i_j}- L_{i_j-1,n}^H(v_S)))  \big)\nonumber\\
& \prod_{i_j \in \mathcal{O}_{\boldsymbol{\omega}}}\big( \prod_{i \in C_{i_j}^e} \textbf{1}( \sgn(y_n L_{i,n}^H(v_S)- L_{i-1,n}^H(v_S))= \sgn(y_n L_{i_j-1,n}^H(v_S)-  v_{i_j-1} ))\nonumber \\
& \prod_{i \in C_{i_j}^o}\sgn(y_nL_{i-1,n}^H(v_S)-  L_{i,n}^H(v_S) )= \sgn(y_n v_{i_j}- L_{i-1,n}^H(v_S))
\big) \bigg].
\end{align}
and let $f^H$ be the limit of $f_n^H$ as $y_n\rightarrow y>0$. Then 
\begin{align}\label{H-integral}
\displaystyle \lim_{n \rightarrow \infty} \frac{1}{p^{r+1}n^{b-r}} \Pi_{S_H}(\boldsymbol{\omega})= \int_{0}^1\int_0^1 \cdots \int_0^1\textbf{1}(0 \leq L_{i}^H(v_S)\leq 1)f^H(v_S) \ dv_S
\end{align}
where $dv_S= \prod_{j=0}^bdv_{i_j}$ is the $(b+1)-$dimensional Lebesgue integral on $[0,1]^{b+1}$, $L_{i}^H$ is the limit of the linear combination $L_{i,n}^H$ as $y_n\rightarrow y$ and $f^H$ is the function defined above via \eqref{H-sgn-indicator}. 

This completes the proof of part (ii).

 
 \end{proof}

  Let $\lfloor \cdot\rfloor$ denotes the greatest integer function.
\begin{lemma}\label{lem:rev}
Suppose $\boldsymbol {\omega}$ is a word of length $2k$ with $b$ distinct letters and $(r+1)$ even generating vertices. Then 
 \begin{itemize}
 \item[(i)]$\displaystyle \lim_{n \rightarrow \infty}\frac{1}{p^{r+1}n^{b-r}} | \Pi_{S_{R^{(s)}}}(\boldsymbol {\omega})|= {\lfloor y \rfloor}^{k-(r+1)} + \alpha_{R^{(s)}}(\boldsymbol {\omega})>0$ if and only if $\boldsymbol {\omega}$ is a symmetric word. 
 \item[(ii)] $\displaystyle \lim_{n \rightarrow \infty}\frac{1}{p^{r+1}n^{b-r}} | \Pi_{S_R}(\boldsymbol {\omega})|= {\lfloor y \rfloor}^{k-(r+1)}+ \alpha_{R}(\boldsymbol {\omega})$ if and only if $\boldsymbol {\omega}$ is a symmetric word.
 \end{itemize}
\end{lemma}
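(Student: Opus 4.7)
The plan is to adapt the template of Lemma \ref{lem:han} to the reverse circulant setting, where the link functions are arithmetic modulo $n$ rather than exact integer equalities. Setting $t_i = \pi(i-1) + \pi(i)$, the matching condition $\xi_\pi(i) = \xi_\pi(j)$ becomes $t_i \equiv t_j \pmod{n}$ for $L_{R^{(s)}}$, so $L_{R^{(s)}}$ is a modular relaxation of $L_{H^{(s)}}$. Using the inclusion $\Pi_{S_{R^{(s)}}}(\boldsymbol{\omega}) \subset \Pi_{R^{(s)}}(\boldsymbol{\omega})$ from Observation (i), non-symmetric words can be ruled out by adapting Lemma 5.4 of \citep{bose2021some}: combining the matching congruences $t_i \equiv t_{i_j} \pmod{n}$ with the telescoping identity $\sum_{i=1}^{2k}(-1)^{i+1}t_i = \pi(0) - \pi(2k) = 0$ yields $\sum_{j=1}^b \alpha_j\, t_{i_j} \equiv 0 \pmod{n}$, where $\alpha_j$ counts (with signs) the parities of the positions of the $j$-th letter. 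For non-symmetric $\boldsymbol{\omega}$ some $\alpha_j \neq 0$, forcing $t_{i_j}$, and hence one generating vertex, to lie in $O(\lfloor y \rfloor+1)$ residue classes, so $|\Pi_{S_{R^{(s)}}}(\boldsymbol{\omega})| = o(p^{r+1}n^{b-r})$.

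For $\boldsymbol{\omega} \in S_b(2k)$ I would fix the $b+1$ generating vertices and use the same inductive argument as in Lemma \ref{lem:han} to express each non-generating vertex $\pi(i)$ modulo $n$ as an explicit $\mathbb{Z}$-linear combination of the generating ones. Because we work mod $n$, each non-generating even vertex---there are $k-(r+1)$ of them, since the $k$ distinct even positions $\pi(0),\pi(2),\ldots,\pi(2k-2)$ contain the $r+1$ even generating ones---admits $\lfloor y \rfloor$ or $\lfloor y \rfloor+1$ lifts to $[1,p]$ once its residue is prescribed, whereas each non-generating odd vertex has essentially a unique lift in $[1,n]$. Rescaling by $1/p$ on even generators and $1/n$ on odd generators, and passing to the limit $p/n \to y$, the ``all non-zero shift'' configurations produce the leading factor $\lfloor y \rfloor^{k-(r+1)}$, while the ``all zero shift'' configuration (exact equality $t_i=t_j$, which reproduces the strictly positive Hankel integral of Lemma \ref{lem:han}(i)) together with the fractional-part corrections at the boundary combine into $\alpha_{R^{(s)}}(\boldsymbol{\omega})$. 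Positivity of the total limit is inherited from the Hankel subcase.

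Part (ii) proceeds along the same lines: $L_R$ differs from $L_{R^{(s)}}$ only by an additional sign constraint on $\sgn(\pi(i)-\pi(i-1))$, mirroring the transition from $L_{H^{(s)}}$ to $L_H$ in Lemma \ref{lem:han}(ii). A sign indicator $f^R$ analogous to $f^H$ in \eqref{H-sgn-indicator} is inserted into the integrand to produce $\alpha_R(\boldsymbol{\omega})$, while the lift count and the leading factor $\lfloor y \rfloor^{k-(r+1)}$ are unchanged. The main obstacle will be the careful bookkeeping in the previous paragraph: although each non-generating even vertex individually admits $\lfloor y \rfloor + O(1)$ lifts, the joint count depends delicately on whether residues of distinct non-generating vertices collide inside $[1,p]$ and on the fractional part of $y$. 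To extract the clean formula one must change variables separating shift indices from residues, invoke a Riemann-sum argument to convert the discrete sums to integrals, and verify that the exceptional configurations (lifts that coincide or fall outside $[1,p]$) contribute precisely the $\alpha_{R^{(s)}}(\boldsymbol{\omega})$ correction.
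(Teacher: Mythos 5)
Your proposal follows essentially the same route as the paper's proof in the Appendix: rule out non-symmetric words via the inclusion $\Pi_{S_{R^{(s)}}}(\boldsymbol{\omega})\subset \Pi_{R^{(s)}}(\boldsymbol{\omega})$ and the modular linear-dependence argument; for symmetric words fix the generating vertices, solve each non-generating vertex modulo $n$, count lifts ($\lfloor y_n\rfloor$ or $\lfloor y_n\rfloor+1$ for even, unique for odd) to get the $\lfloor y\rfloor^{k-(r+1)}$ leading term plus the integral correction, with positivity for $y<1$ borrowed from the Hankel region; and insert the sign indicator for part (ii). The bookkeeping you flag as delicate is in fact benign, since every lift of a non-generating vertex has the same residue mod $n$, so the residues of all non-generating vertices are fixed by the generators and the lift counts multiply independently.
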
 
The proof of this lemma borrows the main ideas from the proof of Lemma \ref{lem:han} and is given in details in Section \ref{appendix}.\\

\vskip3pt
 
Recall the sequence $a_{2n}= \frac{1}{2}{{2n} \choose n}$ from Lemma 5.2 in \citep{bose2021some}. 
 \begin{lemma}\label{lem:c-sc}
Suppose $\boldsymbol {\omega}$ is a word of length $2k$ with $b$ distinct letters and $(r+1)$ even generating vertices. Then 
 \begin{itemize}
 \item[(i)]$\displaystyle \lim_{n \rightarrow \infty}\frac{1}{p^{r+1}n^{b-r}} | \Pi_{S_{C^{(s)}}}(\boldsymbol {\omega})|= a_{\boldsymbol{\omega}}\big[ {\lfloor y \rfloor}^{k-(r+1)} + \alpha_{C^{(s)}}(\boldsymbol {\omega})\big]>0$ if and only if $\boldsymbol {\omega}$ is an even word and $a_{\boldsymbol{\omega}}$ is the multiplicative extension of the sequence $a_{2n}$ when $\boldsymbol{\omega}$ is considered as a partition in $\{1,2,\ldots,2k\}$.
 \item[(ii)] $\displaystyle \lim_{n \rightarrow \infty}\frac{1}{p^{r+1}n^{b-r}} | \Pi_{S_C}(\boldsymbol {\omega})|= {\lfloor y \rfloor}^{k-(r+1)}+ \alpha_{C}(\boldsymbol {\omega})>0$ if and only if $\boldsymbol {\omega}$ is a symmetric word.
 \end{itemize}
\end{lemma}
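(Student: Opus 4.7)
The proof will parallel Lemmas \ref{lem:toe-S} through \ref{lem:rev}, specialized to the modular link functions. For part (i), I would set $s_i = \pi(i)-\pi(i-1)$ and observe that the symmetric circulant link $L_{C^{(s)}}(i,j) = n/2-|n/2-|i-j||$ makes the matching condition $\xi_\pi(i)=\xi_\pi(j)$ equivalent to $s_i \equiv \pm s_j \pmod{n}$, independent of the parities of $i$ and $j$. This is exactly the $s_i$-relation of symmetric Toeplitz (Lemma \ref{lem:toe-S}), but with integer equality replaced by congruence modulo $n$; correspondingly, the circuit condition $\sum_i s_i = 0$ is weakened to $\sum_i s_i \equiv 0 \pmod n$. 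For part (ii), the asymmetric link $L_C(i,j)=(j-i)\pmod n$ gives $s_i\equiv s_j\pmod n$ when $i,j$ have the same parity and $s_i\equiv -s_j\pmod n$ otherwise, which is the asymmetric Toeplitz relation of Lemma \ref{lem:toe} with the same modular relaxation.

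For the necessity of $\boldsymbol{\omega}$ being even in (i), respectively symmetric in (ii), I would reproduce the dependence arguments of Lemmas \ref{lem:toe-S} and \ref{lem:toe}: an odd multiplicity in some block (respectively, an imbalance between odd and even positions of some letter) forces a non-trivial integer linear relation among the generating $s_{i_j}$'s, and this relation survives the modular relaxation because the maximum possible wrap-index $\lfloor(\sum|s_i|)/n\rfloor$ grows only as a bounded multiple of $y$. So $|\Pi_{S_{C^{(s)}}}(\boldsymbol{\omega})|=o(p^{r+1}n^{b-r})$ for non-even $\boldsymbol{\omega}$, and the analogous bound for (ii).

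When $\boldsymbol{\omega}$ is admissible, I linearize exactly as in the claim proved inside Lemma \ref{lem:toe-S}, writing each non-generating $\pi(i)$ in the form $L_{i,n}(\pi_S) + \kappa_i\, n$, where $\kappa_i\in\mathbb{Z}$ records the wrap-around at position $i$. Rescaling the generating vertices to $[0,1]$, each non-generating even $\pi(i)\in\{1,\dots,p\}$ admits roughly $y$ admissible integer values of $\kappa_i$, while each non-generating odd $\pi(i)\in\{1,\dots,n\}$ admits a single one. Splitting according to whether $\kappa_i$ is constrained to an integer in $\{0,1,\dots,\lfloor y\rfloor-1\}$ (the ``bulk'' contribution) or to the fractional boundary, yields the decomposition $\lfloor y\rfloor^{k-(r+1)}+\alpha(\boldsymbol{\omega})$ in the statement; the boundary piece is a Lebesgue integral of indicator functions on $[0,1]^{b+1}$ whose positivity follows by the small-neighborhood argument of Cases 1 and 2 of Lemma \ref{lem:toe-S}. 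Finally, the prefactor $a_{\boldsymbol{\omega}}$ in (i) arises because, in the symmetric circulant case, for each block of size $2m_i$ the signs $\epsilon^{(l)}_i\in\{\pm 1\}$ attached to the occurrences of the $i$th letter are free (the Toeplitz constraint that their signed sum within each block vanish is relaxed by the modular structure), and this yields $\binom{2m_i}{m_i}$ sign patterns per block, halved by the global symmetry $s\mapsto -s$, giving $a_{2m_i}=\tfrac12\binom{2m_i}{m_i}$. In (ii) the signs are fixed by parity, so no such factor appears.

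The main obstacle will be the careful bookkeeping of the wrap-counts $\kappa_i$ in the presence of the closing relation $\pi(2k)=\pi(0)$: the modular circuit condition couples the $\kappa_i$ globally, so one must verify that, for even (resp.\ symmetric) $\boldsymbol{\omega}$, this coupling still permits exactly $\lfloor y\rfloor^{k-(r+1)}$ bulk configurations up to lower-order corrections, while showing that the remaining fractional-part integral is strictly positive on a set of positive Lebesgue measure -- the argument being identical in structure to the symmetric reverse circulant computation detailed in the appendix proof of Lemma \ref{lem:rev}.
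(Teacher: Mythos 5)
Your overall architecture is the one the paper actually uses: reduce to the Toeplitz-type analysis of the increments $s_i=\pi(i)-\pi(i-1)$ with all relations taken modulo $n$, observe that each non-generating odd vertex is then pinned down uniquely while each non-generating even vertex admits $\lfloor y_n\rfloor$ or $\lfloor y_n\rfloor+1$ admissible wrap-counts, and decompose the limit into the bulk term $\lfloor y\rfloor^{k-(r+1)}$ plus a boundary integral whose positivity follows by the small-neighbourhood argument of Lemma \ref{lem:toe-S}; the necessity of evenness (resp.\ symmetry) is handled exactly as you describe. So on the structure of the count and the bulk/boundary split your proposal matches the paper's appendix proof.

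There is, however, one step in your argument that is genuinely wrong as stated: the justification of the prefactor $a_{\boldsymbol\omega}$. You claim that ``the Toeplitz constraint that their signed sum within each block vanish is relaxed by the modular structure,'' so that the signs are free, and then you nevertheless count only the $\binom{2m_i}{m_i}$ \emph{balanced} patterns and halve them. These two statements are incompatible: if the balance constraint really were relaxed you would have $2^{2m_i-1}$ sign patterns per block, not $\tfrac12\binom{2m_i}{m_i}$, and the factor would come out wrong. In fact the constraint is \emph{not} relaxed. The closing condition $\sum_i s_i\equiv 0\ (\mathrm{mod}\ n)$ still forces $\sum_j\big(\sum_{l\in\text{block }j}\epsilon_l\big)s_{i_j}\equiv 0\ (\mathrm{mod}\ n)$, and since the coefficients are bounded integers and the $s_{i_j}$ range over an interval of length $O(n)$, any unbalanced sign pattern confines the generating increments to a union of boundedly many congruence slices, losing a full factor of $n$ and hence contributing nothing in the limit. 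The correct count, as in the paper, is the number of balanced patterns with the first occurrence's sign normalised, namely $\binom{k_i-1}{k_i/2}=\tfrac12\binom{k_i}{k_i/2}=a_{k_i}$ per block; what \emph{does} change relative to the symmetric Toeplitz case is not the set of admissible sign patterns but the fact that, because of the mod-$n$ wrap-around, every admissible pattern contributes the same amount, so the count factors out as $a_{\boldsymbol\omega}$ instead of appearing as a sum $\sum_{L_{\boldsymbol\omega}^T}$ of distinct integrals. You reach the right number, but by a route that would not survive scrutiny; this piece needs to be redone along the lines just indicated.
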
 

The proof of this lemma borrows a lot of ideas from the proofs of Lemmas \ref{lem:toe-S} and lemma \ref{lem:toe} and is given in details in Section \ref{appendix}.

\subsubsection{Proof of Theorem \ref{res:main}}\label{proof of other theorem}

 \begin{lemma}\label{meanzero}
Recall the matrix $Z_A$ from Theorem \ref{res:main}. Under the assumptions of Theorem \ref{res:main}, suppose, $\widetilde{Z_A}$ is the $p \times n $ matrix whose entries are $(y_{i}-\mathbb{E}y_{i})$ and thus have mean 0. Then the LSD of $S_{Z_A}$ and $S_{\widetilde{Z_A}}$ are same.
 \end{lemma}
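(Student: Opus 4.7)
The plan is to mirror Step 1 of the proof of Theorem \ref{res:XXt}, but with the ESD bound (Lemma \ref{lem:metric}) replaced by its EESD analogue (Lemma \ref{lem:Emetric}), since Theorem \ref{res:main} only asserts convergence of the EESD. This reduces the problem to showing that the L\'{e}vy distance between $\mathbb{E}F^{S_{Z_A}}$ and $\mathbb{E}F^{S_{\widetilde{Z_A}}}$ tends to zero, which I will establish by bounding each of the two factors that appear on the right-hand side of \eqref{levy inequality}.

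First I would apply Lemma \ref{lem:Emetric} to write
\begin{align*}
L^4\big(\mathbb{E}F^{S_{Z_A}},\, \mathbb{E}F^{S_{\widetilde{Z_A}}}\big) \leq \frac{2}{p^2}\,\mathbb{E}\Tr\big(Z_AZ_A^T+\widetilde{Z_A}\widetilde{Z_A}^T\big)\cdot\mathbb{E}\Tr\big[(Z_A-\widetilde{Z_A})(Z_A-\widetilde{Z_A})^T\big].
\end{align*}
For the first factor, write $\mathbb{E}\Tr(Z_AZ_A^T)=\sum_{i,j}\mathbb{E}y_{\xi_A(i,j)}^2$, where $\xi_A(i,j)$ denotes the link index at position $(i,j)$. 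By \eqref{fkeven} with $k=1$, $n\,\mathbb{E}y_l^2=f_{2,n}(l/n)$, and since $f_{2,n}\to f_2$ uniformly with $\|f_2\|<\infty$, we have $\sup_l\mathbb{E}y_l^2=O(n^{-1})$. Summing over the $pn$ matrix positions yields $\mathbb{E}\Tr(Z_AZ_A^T)=O(p)$, and the identical bound holds for $\widetilde{Z_A}$ since variances are dominated by second moments.

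For the second factor, note that $Z_A-\widetilde{Z_A}$ is the deterministic matrix whose $(i,j)$ entry is $\mathbb{E}y_{\xi_A(i,j)}$, and hence
\begin{align*}
\mathbb{E}\Tr\big[(Z_A-\widetilde{Z_A})(Z_A-\widetilde{Z_A})^T\big] = \sum_{i,j}\big(\mathbb{E}y_{\xi_A(i,j)}\big)^2 \;\leq\; pn\cdot\sup_l\big(\mathbb{E}y_l\big)^2.
\end{align*}
Applying \eqref{fkodd} with $k=1$ and the choice $\alpha=3/4$ gives $\sup_l|\mathbb{E}y_l|=o(n^{-3/4})$, so $\sup_l(\mathbb{E}y_l)^2=o(n^{-3/2})$ and the second factor is $o(p\,n^{-1/2})$.

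Combining the two estimates,
\begin{align*}
L^4\big(\mathbb{E}F^{S_{Z_A}},\, \mathbb{E}F^{S_{\widetilde{Z_A}}}\big) \;\leq\; \frac{2}{p^2}\cdot O(p)\cdot o\big(p\,n^{-1/2}\big) \;=\; o\big(n^{-1/2}\big)\;\longrightarrow\; 0,
\end{align*}
so the two EESDs are at vanishing L\'{e}vy distance and therefore share the same weak limit. The argument is essentially routine, the only mild bookkeeping point being that although each random variable $x_l$ is repeated $\mathcal{O}(n)$ times in the patterned matrix $A_p$, the crude bound ``number of matrix positions $\times$ sup'' is already sufficient because the $p^{-2}$ prefactor in Lemma \ref{lem:Emetric} absorbs the redundancy; there is no genuine obstacle.
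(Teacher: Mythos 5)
Your proposal is correct and follows essentially the same route as the paper: both reduce the claim to Lemma \ref{lem:Emetric} and show the two factors are, respectively, $O(p)$ (via \eqref{fkeven} with $k=1$ and the uniform boundedness of $f_{2,n}$) and $o(p)$ (via \eqref{fkodd} with $k=1$), the only cosmetic difference being that the paper sums over the index set $\{-(n+p),\dots,n+p\}$ weighting each variable by its $\mathcal{O}(n)$ multiplicity while you sum directly over the $pn$ matrix positions. The paper's proof additionally verifies that Assumption B carries over to $\widetilde{Z_A}$, which is needed for the subsequent moment computation but not for the equality of the two LSDs itself, so its omission is not a gap in your argument.
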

 \begin{proof}
In Step 1 of the proof of Theorem \ref{res:XXt}, we dealt with the same problem but for a bi-sequence of random variables $\{x_{ij,n}\}$. The same proof can be adapted in this case replacing by the sequence $\{x_{i,n}\}$. Therefore, condition \eqref{fkeven} is true for $\widetilde{Z_A}$. Similarly we can show that \eqref{fkodd} is true for $\widetilde{Z_A}$. Hence, Assumption B holds for $\widetilde{Z_A}$.

Now from Lemma \ref{lem:Emetric}, 
\begin{align*}
L^4\big(\mathbb{E}F^{S_{Z_A}}, \mathbb{E}F^{S_{\widetilde{Z_A}}}\big) & \leq  \frac{2}{p^2}(\mathbb{E}\Tr(Z_AZ_A^T+\widetilde{Z_A}\widetilde{Z_A}^T))(\mathbb{E}\Tr[(Z_A-\widetilde{Z_A})(Z_A-\widetilde{Z_A})^T]) \\
&\leq \frac{2}{p}\bigg(\displaystyle \sum_{i=-(n+p)}^{n+p}cn \mathbb{E}\big( 2y_{i}^2+ (\mathbb{E}y_{i})^2- 2y_{ij}\mathbb{E}y_{ij}\big)\bigg)\frac{1}{p}\bigg(\sum_{i=-(n+p)}^{n+p}cn(\mathbb{E}y_{i})^2\bigg)
\end{align*}
where $c$ is a constant depending on the link function of the matrix. Observe that for all matrices with link functions (i)-(viii), the second inequality is true due to the structure of the link functions. The second factor of the rhs in the above inequality is bounded by $$\frac{2}{y_n} (n+p)(\sup_{i}\mathbb{E}y_{i})^2=\frac{2}{y_n}(\sup_{i}\sqrt{n}\mathbb{E}y_{i})^2+\frac{2}{y_n}(\sup_{i}\sqrt{p}\mathbb{E}y_{i})^2 \rightarrow 0  \ \ \text{ as } n \rightarrow \infty, \  p/n \rightarrow y>0 \ \text{ by } \eqref{fkodd}.$$
Again, $\mathbb{E}\big[\frac{1}{p}\sum_{i}y_{i}^2\big] \rightarrow \int_{[0,1]^2} f_2(x,y)\ dx\ dy$.  Therefore the first term of the rhs in the inequality is bounded uniformly and hence $L^4\big(\mathbb{E}F^{S_{Z_A}}, \mathbb{E}F^{S_{\widetilde{Z_A}}}\big) \rightarrow 0$ as $p \rightarrow \infty$. Thus we can assume that the entries of $Z_A$ have mean 0.
 \end{proof}

 \begin{lemma}\label{truncation}
 Under the conditions of Theorem \ref{res:main}, if the EESD of the matrices $S_{Z_A}$ converges weakly to $\mu_{A}$, then, with the assumption \eqref{S-truncationtoe-XXt}, the EESD of $S_{A}$ (where $A$ is the non-truncated version of $Z$) converges weakly to $\mu_A$.
 \end{lemma}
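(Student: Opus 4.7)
The plan is to control the L\'evy distance between $\mathbb{E}F^{S_A}$ and $\mathbb{E}F^{S_{Z_A}}$ using Lemma \ref{lem:Emetric}, show that this distance vanishes in the limit, and then invoke the hypothesis $\mathbb{E}F^{S_{Z_A}} \to \mu_A$ together with the triangle inequality for the L\'evy metric to conclude. Concretely, Lemma \ref{lem:Emetric} gives
\begin{equation*}
L^4\bigl(\mathbb{E}F^{S_A},\mathbb{E}F^{S_{Z_A}}\bigr) \;\leq\; \frac{2}{p^2}\bigl(\mathbb{E}\Tr(AA^T+Z_AZ_A^T)\bigr)\bigl(\mathbb{E}\Tr[(A-Z_A)(A-Z_A)^T]\bigr),
\end{equation*}
so the task reduces to bounding each factor.

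For the difference factor, the entries of $A-Z_A$ are of the form $x_l\boldsymbol{1}_{\{|x_l|>t_n\}}$, and the key structural fact about each of the eight patterned matrices is that any single input variable $x_l$ appears at most $cn$ times in $A$, with $c$ depending only on the pattern. Counting these multiplicities yields
\begin{equation*}
\mathbb{E}\Tr[(A-Z_A)(A-Z_A)^T] \;\leq\; cn\sum_{l}\mathbb{E}\bigl[x_l^2\boldsymbol{1}_{\{|x_l|>t_n\}}\bigr],
\end{equation*}
which is $cn\cdot o(1)$ directly from hypothesis \eqref{S-truncationtoe-XXt}.

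For the first factor I would use Assumption B in the form $n\mathbb{E}[y_l^2]=f_{2,n}(l/n)$, with $\|f_{2,n}\|$ uniformly bounded; summing over the $O(n)$ possible indices gives $\sum_l \mathbb{E}[y_l^2]=O(1)$, and hence $\mathbb{E}\Tr(Z_AZ_A^T)\leq cn\sum_l\mathbb{E}[y_l^2]=O(p)$. Splitting $\mathbb{E}[x_l^2]=\mathbb{E}[y_l^2]+\mathbb{E}[x_l^2\boldsymbol{1}_{\{|x_l|>t_n\}}]$ and using \eqref{S-truncationtoe-XXt} again yields $\mathbb{E}\Tr(AA^T)=O(p)$ as well. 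Assembling the pieces,
\begin{equation*}
L^4\bigl(\mathbb{E}F^{S_A},\mathbb{E}F^{S_{Z_A}}\bigr) \;\leq\; \frac{2}{p^2}\cdot O(p)\cdot cn\cdot o(1) \;=\; O(n/p)\cdot o(1) \;=\; o(1),
\end{equation*}
since $p/n\to y\in(0,\infty)$.

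The only real obstacle is the bookkeeping step of establishing the pattern-dependent multiplicity bound ``each $x_l$ appears at most $cn$ times'' uniformly over all eight patterns. Once this is in hand, which is the same bound already invoked in the proof of Lemma \ref{meanzero}, there are no further probabilistic subtleties: combining $L\bigl(\mathbb{E}F^{S_A},\mathbb{E}F^{S_{Z_A}}\bigr)\to 0$ with the hypothesis $\mathbb{E}F^{S_{Z_A}}\to\mu_A$ gives the desired weak convergence $\mathbb{E}F^{S_A}\to\mu_A$.
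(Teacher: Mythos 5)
Your proposal is correct and follows essentially the same route as the paper: both apply the expected L\'evy distance bound of Lemma \ref{lem:Emetric}, use the pattern-dependent multiplicity bound that each input $x_l$ occurs at most $cn$ times to reduce the trace of the difference to $cn\sum_l \mathbb{E}[x_l^2\boldsymbol{1}_{\{|x_l|>t_n\}}]$, control this by hypothesis \eqref{S-truncationtoe-XXt}, and bound the first trace factor via Assumption B exactly as in the proof of Lemma \ref{meanzero}. The only difference is that you spell out the $O(p)$ bound on $\mathbb{E}\Tr(AA^T+Z_AZ_A^T)$ explicitly where the paper simply refers back to Lemma \ref{meanzero}.
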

\begin{proof}
Observe that from Lemma \ref{lem:Emetric}, we have 
\begin{align}\label{levy-EXXt}
L^4(\mathbb{E}F^{S},\mathbb{E}F^{S_{Z_A}}) &\leq \frac{2}{p^2}(\mathbb{E}\Tr(AA^T+Z_AZ_A^T))(\mathbb{E}\Tr[(A-Z_A)(A-Z_A)^T]) \nonumber\\
& \leq \frac{2}{p} \bigg(\displaystyle 2cn \sum_{i=-(n+p)}^{n+p}\mathbb{E}[y_{i}^2]+ cn \sum_{i=-(n+p)}^{n+p}\mathbb{E}[x_{i}^2\boldsymbol{1}_{[|x_{i}|>t_n]}] \bigg)\bigg(\frac{1}{p}\sum_{i=-(n+p)}^{n+p} cn \mathbb{E}[x_{i}^2\boldsymbol{1}_{[|x_{i}|>t_n]}]\bigg).
\end{align}
The second factor in the above inequality tends to zero a.s. (or in probability) as $n \rightarrow \infty$ from \eqref{S-truncationtoe-XXt}. 
 Again, the first factor is uniformly bounded as in the proof of Lemma \ref{meanzero}. Thus $L^4(\mathbb{E}F^{S_A},\mathbb{E}F^{S_{Z_A}})\rightarrow 0$ as $p \rightarrow \infty$. 

This completes the proof of the lemma.
\end{proof} 

Now we will prove Theorem \ref{res:main}. The arguments in the proof of the different parts are often repetitive. So we prove part (i) in details and omit the elaborate arguments for the other parts.
\begin{proof}[\textbf{Proof of Theorem \ref{res:main}}]
We shall prove the theorem in different parts.  \\
\noindent \textbf{(i)}: Let $A=T^{(s)}$. First observe that from Lemma \ref{truncation}, it is enough to prove that the EESD of $S_{Z_A}$ converges to $\mu_{T^{(s)}}$. Further, from Lemma \ref{meanzero} we may assume that $\mathbb{E}(y_{i})=0$. Therefore it suffices to verify the first moment condition and the Carleman's condition for $S_{Z_A}$.

As $\mathbb{E}(y_{i})=0$, from \eqref{genmoment-AAt}, if $\displaystyle\lim_{p \rightarrow \infty} \frac{1}{p} \sum_{\pi \in \Pi_{S_{T^{(s)}}}(\boldsymbol{\omega})} \mathbb{E}(Y_{\pi})$ exists for every matched word $\boldsymbol{\omega}$ of length $2k$ with $b$ distinct letters and $(r+1)$ even generating vertices ($k \geq 1, 1 \leq b \leq b, 0 \leq r \leq (b-1)$), then the first moment condition would follow.

Suppose $\boldsymbol {\omega}$ is a word with $b$ distinct letters, $(r+1)$ even generating vertices and the distinct letters appear $k_1,k_2,\ldots,k_b$ times. Let the $j$th distinct letter appear at $(\pi(i_j-1),\pi(i_j))$th position for the first time. Denote $(\pi(i_j-1),\pi(i_j))$ as $(m_j,l_j)$. 
Let us now recall $v_{i}, 1 \leq i \leq 2k$ and $s_i, u_i, 1 \leq i \leq 2k$ as defined in Lemma \ref{lem:toe-S}.

First, let $\boldsymbol {\omega} \notin E(2k)$. Suppose $\boldsymbol{\omega}$ contains $b_1$ distinct letters that appear even number of times and $b_2$ number of distinct letters that appear odd number of times and $b=b_1+b_2$. So
we  assume that for each $\pi \in \Pi(\boldsymbol {\omega})$, 
$k_{j_p}$, $1\leq p \leq b_1$ are even and $k_{j_q}$, $b_1+1\leq q \leq b_1+b_2$ are odd. Hence the contribution of this $\boldsymbol {\omega}$ to  \eqref{momentnoniid-XXt} is as follows:
\begin{align}\label{finitesum-nons2k}
 \frac{1}{pn^{b_1+b_2-\frac{1}{2}}} \sum_{S} \prod_{p=1}^{b_1} f_{k_{j_p}}(|s_{j_p}|)   \prod_{q=b_1+1}^{b_1+b_2} n^{\frac{b_2-1/2}{b_2}} \mathbb{E}\Big[y_{|s_{j_q}|}^{k_{j_q}}\Big].
\end{align}
For $n$ large, $n^{\frac{b_2-1/2}{b_2}} \mathbb{E}[y_{(s_{j_q}-2)\ (\text{mod }n)}^{k_{j_q}}]<1$ for any $b_1+1\leq q\leq b_1+b_2$ and $\prod_{p=1}^{b_1} f_{k_{j_p}}(|s_{j_p}|) \leq M$ (independent of $n$). Now as $\boldsymbol{\omega} \notin E_b(2k)$ and $p/n \rightarrow y >0$, from Lemma \ref{lem:toe-S} we have, $|S|\leq b$. Hence, as $p,n \rightarrow \infty$ and $p/n \rightarrow y>0$, \eqref{finitesum-nons2k} goes to 0. Thus any word that is not even, contributes 0 to the limiting moments.

Now let $\boldsymbol {\omega}\in E_b(2k)$. Let $\mathcal{E}_{\boldsymbol{\omega}}= $ and $\mathcal{O}_{\boldsymbol{\omega}}$ be as in \eqref{def-even gen} and \eqref{def-odd gen}. Clearly, as observed in  Lemma \ref{lem:toe}, there are $ \prod_{i=1}^b {{k_i-1} \choose \frac{k_i}{2}}$ combination of equations for the $s_j$'s (and hence $v_j$'s) for determining the non-generating vertices, once the generating vertices are chosen. Let us denote a generic combination of the $v_j$'s by $L_{\boldsymbol{\omega}}^T$ (see \eqref{integral-gen}). For each of the combination of equations we get positive (possibly different) contribution (see Lemma \ref{lem:toe}). 
 Then the contribution of each  combination $L_{\boldsymbol{\omega}}^T$ corresponding to the word $\boldsymbol {\omega}$ can be written as 
\begin{align}\label{finitesum-toep}
 y_n^{r}\frac{1}{p^{r+1}n^{b-r}} \sum_S  \prod_{j=1}^b  \bigg(\prod_{i_j \in \mathcal{E}_{\boldsymbol{\omega}}} & f_{k_j,n}\big(|v_{m_j}-y_nv_{l_j}|\big)\prod_{i_j \in \mathcal{O}_{\boldsymbol{\omega}}}f_{k_j,n}\big(|y_nv_{m_j}-v_{l_j}|\big)\bigg) \nonumber\\
&  \textbf{1}(0 \leq L_{i}^T(v_S) \leq 1,\ \forall \  i \in S^{\prime}),  
\end{align}
where $S$ is the set of  distinct generating vertices and $S^{\prime}$ is   the set of indices of the non-generating vertices of $\boldsymbol {\omega}$.
 By abuse of notation let $m_1$ and $l_j, 1\leq j\leq b$ denote the indices of the generating vertices. Therefore as $p \rightarrow \infty$, the contribution of $\boldsymbol {\omega}$ in \eqref{momentnoniid-XXt} is given by 
\begin{align}\label{limit-toe-S}
 y^r\sum_{L_{\boldsymbol{\omega}}^T}\int_{0}^{1} \int_{0}^{1}\int_{0}^{1} \cdots  \int_{0}^{1}  \prod_{j=1}^b  \bigg( & \prod_{i_j \in \mathcal{E}_{\boldsymbol{\omega}}}f_{k_j,n}\big(|x_{m_j}-yx_{l_j}|\big)\prod_{i_j \in \mathcal{O}_{\boldsymbol{\omega}}}f_{k_j,n}\big(|yx_{m_j}-x_{l_j}|\big)\bigg) \nonumber\\
 &  \textbf{1}(0 \leq L_{i}^T(x_S) \leq 1,\ \forall \  i \in S^{\prime})\ dx_S, 
\end{align}
where $dx_S= dx_{m_1}dx_{l_1}\cdots dx_{l_b}$ denotes the $(b+1)$-dimensional Lebesgue measure on $[0,1]^{b+1}$ and $0<y =\lim p/n$. As for each $k \geq 1$, there are finitely many even words, the first moment condition is established. 
Hence we have, 
\begin{align}\label{toe1-S}
 &\displaystyle\lim_{n \rightarrow \infty}\frac{1}{p}\mathbb{E}[\Tr(S_{Z_A})^{k}] =  \displaystyle \sum_{b=1}^k \sum_{r=0}^{b-1}\sum_{\underset{(r+1) \text{ even generating vertics}}{\sigma \in E_b(2k) \text{ with }}} y^r \sum_{L_{\sigma}^T}\int_{0}^{1} \int_{0}^{1}\int_{0}^{1} \cdots  \int_{0}^{1} \nonumber\\
&  \prod_{j=1}^b  \bigg(  \prod_{i_j \in \mathcal{E}_{\boldsymbol{\omega}}}f_{k_j,n}\big(|x_{m_j}-yx_{l_j}|\big) 
\prod_{i_j \in \mathcal{O}_{\boldsymbol{\omega}}}f_{k_j,n}\big(|yx_{m_j}-x_{l_j}|\big)\bigg)
  \textbf{1}(0 \leq L_{i}^T(x_S) \leq 1,\ \forall \  i \in S^{\prime})\ dx_S.
\end{align}

Now we show that the limits, $ \lim_{n \rightarrow \infty}\frac{1}{p}\mathbb{E}[\Tr(S_{Z_A})^{k}]=\gamma_{k}, k \geq 1$ determines a unique distribution.

If $y\leq 1$, 
\begin{align*}
\gamma_{k}= &\lim_{n \rightarrow \infty}\frac{1}{p}\mathbb{E}[\Tr(S_{Z_A})^{k}]
 \leq \displaystyle \sum_{\sigma \in E(2k)} M_{\sigma}
 \leq \displaystyle \sum_{\sigma \in \mathcal{P}(2k)} M_{\sigma} 
 =\alpha_{k}.
\end{align*}
As $\{\alpha_{k}\}$ satisfies Carleman's condition, $\{\gamma_{k}\}$  does so. Hence the sequence of moments $\{\gamma_{k}\}$ determines a unique distribution. 

If $y>1$, $y^r\leq y^b, 0 \leq r \leq b, 1 \leq b \leq k$ and hence 
\begin{align*}
\gamma_{k}= &\lim_{n \rightarrow \infty}\frac{1}{p}\mathbb{E}[\Tr(S_{Z})^{k}]
 \leq \displaystyle \sum_{\sigma \in E(2k)} y_{\sigma}M_{\sigma}
 \leq \displaystyle \sum_{\sigma \in \mathcal{P}(2k)} y^{2k}M_{\sigma} 
 =y^{2k}\alpha_{k}.
\end{align*}
As, $y\in (1, \infty)$ and $\alpha_{k}$ satisfies Carleman's condition, $\{\gamma_{k}\}$  does so. Hence the sequence of moments $\{\gamma_{k}\}$ determines a unique distribution.

Therefore, there exists a measure $ \mu_{T^{(s)}} $ with moment sequence $\{\gamma_{k}\}$ such that $\mathbb{E}\mu_{S_{Z_A}}$ converges to $ \mu_{T^{(s)}}$, whose moments are given as in \eqref{toe1-S}.

This completes the proof of part (i).\\

\vskip5pt 

\noindent \textbf{(ii)} Let $A=T$. Just as in part (i), it suffices to verify the first moment condition and the Carleman's condition for $S_{Z}$. 

As $\mathbb{E}(y_{i})=0$, from \eqref{genmoment-AAt}, if $\displaystyle\lim_{p \rightarrow \infty} \frac{1}{p} \sum_{\pi \in \Pi_{S_{T}}(\boldsymbol{\omega})} \mathbb{E}(Y_{\pi})$ exists for every matched word $\boldsymbol{\omega}$ of length $2k$ with $b$ distinct letters and $(r+1)$ even genrating vertices ($k \geq 1, 1 \leq b \leq b, 0 \leq r \leq (b-1)$), then the first moment condition follows.

Now suppose $\boldsymbol {\omega}$ is a word with $b$ distinct letters each letter appearing $k
_1,k_2,\ldots,k_b$ times. From this point, we borrow all notations from part (i).

Let $v_i=\pi({i})/n$ as defined in Lemma \ref{lem:toe} and $U_n= \{0,1/n,2/n,\ldots,(n-1)/n\}$. 

If $\boldsymbol{\omega}$ is not an even word, then its contribution to the limiting moments is 0. This follows using the same argument as part (i). 

Now suppose $\boldsymbol{\omega} \in E_b(2k)\setminus S_b(2k)$. Then the contribution of this $\boldsymbol {\omega}$ can be written as 
\begin{align}\label{finitesum-toe}
 y_n^{r}\frac{1}{p^{r+1}n^{b-r}} \sum_S  \prod_{j=1}^b  \bigg(\prod_{i_j \in \mathcal{E}_{\boldsymbol{\omega}}} & f_{k_j,n}\big(y_nv_{l_j}-v_{m_j}\big)\prod_{i_j \in \mathcal{O}_{\boldsymbol{\omega}}}f_{k_j,n}\big(y_nv_{m_j}-v_{l_j}\big)\bigg) \nonumber\\
&  \textbf{1}(0 \leq L_{i}^T(v_S) \leq 1,\ \forall \  i \in S^{\prime}), 
 \end{align}
where $S$ is the set of  distinct generating vertices and $S^{\prime}$ is   the set of indices of the non-generating vertices of $\boldsymbol {\omega}$ and $y_n=p/n$. From Lemma \ref{lem:toe}, observe that for $j\neq 1$, $m_j$ can be written as a linear combination of $\{l_i; 1\leq i\leq j-1\}$ and $m_1$. By abuse of notation let $m_1$ and $l_j, 1\leq j\leq b$ denote the indices of the generating vertices. Then, as $p \rightarrow \infty$, the above sum goes to 
\begin{align}
y^r \int_{0}^{1} \int_{0}^{1}\int_{0}^{1} \cdots  \int_{0}^{1}  \prod_{j=1}^b  \bigg( & \prod_{i_j \in \mathcal{E}_{\boldsymbol{\omega}}}f_{k_j,n}\big(yx_{l_j}-x_{m_j}\big)\prod_{i_j \in \mathcal{O}_{\boldsymbol{\omega}}}f_{k_j,n}\big(yx_{m_j}-x_{l_j}\big)\bigg) \nonumber\\
 &  \textbf{1}(0 \leq L_{i}^T(x_S) \leq 1,\ \forall \  i \in S^{\prime})\ dx_S, 
\end{align}
where 
$dx_S= dx_{m_1}dx_{l_1}dx_{l_2}\cdots dx_{l_b}$.

By Lemma \ref{lem:toe}, it follows that the above integral reduces to a $c$  dimensional integral where $c \leq b$, if $\boldsymbol{\omega} \notin S_b(2k)$ as $x_{m_1}$ and the $x_{l_j}$'s then satisfy a linear equation (see proof of Lemma \ref{lem:toe}). As a result, the contribution of $\boldsymbol {\omega}$ as described in \eqref{limit-toe} is equal to 0 if $\boldsymbol {\omega}  \in E_b(2k)\setminus S_b(2k)$.

Now let $\boldsymbol {\omega}\in S_{b}(2k)$. Then the contribution  the word $\boldsymbol {\omega}$ to  the limiting moments can be written as
\begin{align}\label{limit-toe}
y^r \int_{0}^{1} \int_{0}^{1}\int_{0}^{1} \cdots  \int_{0}^{1}  \prod_{j=1}^b  \bigg( & \prod_{i_j \in \mathcal{E}_{\boldsymbol{\omega}}}f_{k_j,n}\big(yx_{l_j}-x_{m_j}\big)\prod_{i_j \in \mathcal{O}_{\boldsymbol{\omega}}}f_{k_j,n}\big(yx_{m_j}-x_{l_j}\big)\bigg) \nonumber\\
 &  \textbf{1}(0 \leq L_{i}^T(x_S) \leq 1,\ \forall \  i \in S^{\prime})\ dx_S, 
\end{align}
where 
$dx_S= dx_{m_1}dx_{l_1}dx_{l_2}\cdots dx_{l_b}$ is the $(b+1)$-dimensional Lebesgue measure on $[0,1]^{(b+1)}$ and $0< y = \lim y_n$. As for each $k \geq 1$, there are finitely many symmetric words each of which contributes \eqref{limit-toe} to the $k$th limiting moment, the first moment condition holds true.

Therefore for $k\geq 1$, 
 \begin{align}\label{toe2}
 \displaystyle\lim_{n \rightarrow \infty}\frac{1}{n}\mathbb{E}[\Tr(S_{Z_A})^{k}]= \displaystyle \sum_{b=1}^k \sum_{r=0}^{b-1}\sum_{\underset{(r+1) \text{ even generating vertics}}{\sigma \in S_b(2k) \text{ with }}} y^r \int_{0}^{1} \int_{0}^{1}\int_{0}^{1} \cdots  \int_{0}^{1} \nonumber\\
  \prod_{j=1}^b  \bigg(  \prod_{i_j \in \mathcal{E}_{\boldsymbol{\omega}}}f_{k_j,n}\big(yx_{l_j}-x_{m_j}\big) 
\prod_{i_j \in \mathcal{O}_{\boldsymbol{\omega}}}f_{k_j,n}\big(yx_{m_j}-x_{l_j}\big)\bigg)
 &  \textbf{1}(0 \leq L_{i}^T(x_S) \leq 1,\ \forall \  i \in S^{\prime})\ dx_S.
 \end{align}

\vskip3pt
As $S(2k)\subset E(2k)$ and the integrand in \eqref{toe2} is bounded, the same arguments as in part (i) are applicable. Thus the Carleman's condition for $S_Z$ is satisfied. 
%
%
Therefore, there exists a measure $ \mu_{T} $ with moment sequence $\{\gamma_{k}\}$ such that $\mathbb{E}\mu_{S_{Z}}$ converges to $ \mu_{T}$.
This proves part  (ii).\\
\vskip2pt 

\noindent \textbf{(iii)} Let $A=H^{(s)}$. Since the arguments are similar to the previous parts, we resolve to describe the limiting moments from this point onward.

For $k\geq 1$, 
 \begin{align}\label{han1}
 \beta_k(\mu_{H^{(s)}})= \displaystyle \sum_{b=1}^k \sum_{r=0}^{b-1}\sum_{\underset{(r+1) \text{ even generating vertics}}{\sigma \in S_b(2k) \text{ with }}} y^r \int_{0}^{1} \int_{0}^{1}\int_{0}^{1} \cdots  \int_{0}^{1} \nonumber\\
  \prod_{j=1}^b  \bigg(  \prod_{i_j \in \mathcal{E}_{\boldsymbol{\omega}}}f_{k_j,n}\big(x_{m_j}+yx_{l_j}\big) 
\prod_{i_j \in \mathcal{O}_{\boldsymbol{\omega}}}f_{k_j,n}\big(yx_{m_j}+x_{l_j}\big)\bigg)
 &  \textbf{1}(0 \leq L_{i}^H(x_S) \leq 1,\ \forall \  i \in S^{\prime})\ dx_S.
 \end{align}
%
%
%
%

\vskip5pt

\noindent \textbf{(iv)} Let $A=H$. 
We get that there exist a measure $\mu_H$ such that EESD of $S_{H}$ converges to $\mu_H$, whose moments are as follows:
\begin{align}\label{han2}
 \beta_k(\mu_H)= \displaystyle \sum_{b=1}^k &  \sum_{r=0}^{b-1}\sum_{\underset{(r+1) \text{ even generating vertics}}{\sigma \in S_b(2k) \text{ with }}}  y^r \int_{0}^{1} \int_{0}^{1}\int_{0}^{1} \cdots  \int_{0}^{1} \nonumber\\
 & \prod_{j=1}^b  \bigg(  \prod_{i_j \in \mathcal{E}_{\boldsymbol{\omega}}}f_{k_j,n}\big(\sgn(yx_{l_j}-x_{m_j})(x_{m_j}+yx_{l_j})\big) 
\prod_{i_j \in \mathcal{O}_{\boldsymbol{\omega}}}f_{k_j,n}\big(\sgn(yx_{m_j}-x_{l_j})(yx_{m_j}+x_{l_j})\big)\bigg)\nonumber\\
 &  \textbf{1}(0 \leq L_{i}^H(x_S) \leq 1,\ \forall \  i \in S^{\prime})f^H(x_S)\ dx_S.
 \end{align}

\noindent \textbf{(v)} Let $A=R^{(s)}$.

%




For any $m \geq 1$, let 
\begin{align}\label{h-def}
h_{2m,n}(x_1,x_2)& = f_{2m,n}(x_1+x_2)\boldsymbol{1}(0 \leq x_1+x_2\leq 1) + f_{2m,n}(x_1+x_2-1)\boldsymbol{1}(x_1+x_2> 1),\nonumber\\
h_{2m}(x_1,x_2)& = f_{2m}(x_1+x_2)\boldsymbol{1}(0 \leq x_1+x_2\leq 1) + f_{2m}(x_1+x_2-1)\boldsymbol{1}(x_1+x_2> 1)
\end{align}
Then for $k\geq 1$ (see Lemma \ref{lem:rev}), 
 \begin{align}\label{rev1}
 \beta_k(\mu_{R^{(s)}})=& \displaystyle \sum_{b=1}^k \sum_{r=0}^{b-1}\sum_{\underset{(r+1) \text{ even generating vertics}}{\sigma \in S_b(2k) \text{ with }}} y^r\nonumber\\
 & \Bigg[ \lfloor y \rfloor^{k-(r+1))}\displaystyle \int_{0}^{1} \int_{0}^{1} \cdots  \int_{0}^{1}  \prod_{j=1}^b  \bigg(  \prod_{i_j \in \mathcal{E}_{\boldsymbol{\omega}}}h_{k_j,n}\big(x_{m_j},yx_{l_j}\big)\prod_{i_j \in \mathcal{O}_{\boldsymbol{\omega}}}h_{k_j,n}\big(yx_{m_j},x_{l_j}\big)\bigg)\ dx_S\nonumber\\
&  + \sum_{\phi \neq S_0\subset S^{-}} \lfloor y \rfloor^{|S^{-}- S_0|} \int_{0}^{1} \int_{0}^{1} \cdots  \int_{0}^{1}  \prod_{j=1}^b  \bigg(  \prod_{i_j \in \mathcal{E}_{\boldsymbol{\omega}}}h_{k_j,n}\big(x_{m_j},yx_{l_j}\big)\prod_{i_j \in \mathcal{O}_{\boldsymbol{\omega}}}h_{k_j,n}\big(yx_{m_j},x_{l_j}\big)\bigg) \nonumber\\
 &  \textbf{1}(F(yL_{2i}^H(x_S)) \leq y-\lfloor y \rfloor,\ \forall \  2i \in S
 _0)\ dx_S\Bigg].
 \end{align}
%
%
%
%

\vskip5pt

\noindent \textbf{(vi)} Let $A=R$. Suppose
\begin{align}\label{h-tilde-def}
\tilde{h}_{2m}(x_,x_2)= & f_{2m}(\sgn(x_2-x_1)(x_1+x_2))\boldsymbol{1}(0 \leq x_1+x_2\leq 1) + \nonumber\\
& f_{2m}(\sgn(x_2-x_1)(x_1+x_2-1))\boldsymbol{1}(x_1+x_2> 1)
\end{align}
The moments of $\mu_R$ are as follows (see \eqref{rc-integral}):
\begin{align}\label{rev2}
 \displaystyle\beta_k(\mu_R)= & \displaystyle \sum_{b=1}^k \sum_{r=0}^{b-1}\sum_{\underset{(r+1) \text{ even generating vertics}}{\sigma \in S_b(2k) \text{ with }}} y^r\nonumber\\
 & \Bigg[ \lfloor y \rfloor^{k-(r+1))}\displaystyle \int_{0}^{1} \int_{0}^{1} \cdots  \int_{0}^{1}  \prod_{j=1}^b  \bigg(  \prod_{i_j \in \mathcal{E}_{\boldsymbol{\omega}}}\tilde{h}_{k_j,n}\big(x_{m_j},yx_{l_j}\big)\prod_{i_j \in \mathcal{O}_{\boldsymbol{\omega}}}\tilde{h}_{k_j,n}\big(yx_{m_j},x_{l_j}\big)\bigg)f^H(x_S) \ dx_S\nonumber\\
&  + \sum_{\phi \neq S_0\subset S^{-}} \lfloor y \rfloor^{|S^{-}- S_0|} \int_{0}^{1} \int_{0}^{1} \cdots  \int_{0}^{1}  \prod_{j=1}^b  \bigg(  \prod_{i_j \in \mathcal{E}_{\boldsymbol{\omega}}}\tilde{h}_{k_j,n}\big(x_{m_j},yx_{l_j}\big)\prod_{i_j \in \mathcal{O}_{\boldsymbol{\omega}}}\tilde{h}_{k_j,n}\big(yx_{m_j},x_{l_j}\big)\bigg) \nonumber\\
 &  \textbf{1}(F(yL_{2i}^H(x_S)) \leq y-\lfloor y \rfloor,\ \forall \  2i \in S
 _0)f^H(x_S)\ dx_S\Bigg]. 
 \end{align}

\noindent \textbf{(vii)} Let $A=C^{(s)}$. In this case, we have, 
\begin{align}\label{sc1}
 \beta_k(\mu_{C^{(s)}})=  \displaystyle \sum_{b=1}^k & \sum_{r=0}^{b-1}\sum_{\underset{(r+1) \text{ even generating vertics}}{\sigma \in E_b(2k) \text{ with }}} y^r  a_{\sigma}\nonumber\\
 & \bigg[ \lfloor y \rfloor^{k-(r+1)} \int_{0}^{1}  \cdots  \int_{0}^{1}  \prod_{j=1}^b   \bigg(\prod_{i_j \in \mathcal{E}_{\boldsymbol{\omega}}}  f_{k_j}\big(\big|1/2- |1/2-|x_{m_j}-yx_{l_j}||\big|\big)\nonumber\\
& \prod_{i_j \in \mathcal{O}_{\boldsymbol{\omega}}}f_{k_j}\big(\big|1/2-|1/2-|yx_{m_j}-x_{l_j}||\big|\big)\bigg) + 
 \displaystyle \sum_{\phi \neq S_0\subset S^{-}}\lfloor y \rfloor^{|S^{-}- S_0|}\nonumber\\
  & \int_{0}^{1}  \cdots  \int_{0}^{1}  \prod_{j=1}^b   \bigg(\prod_{i_j \in \mathcal{E}_{\boldsymbol{\omega}}}  f_{k_j}\big(\big|1/2- |1/2-|x_{m_j}-yx_{l_j}||\big|\big)\nonumber\\
& \prod_{i_j \in \mathcal{O}_{\boldsymbol{\omega}}}f_{k_j}\big(\big|1/2-|1/2-|yx_{m_j}-x_{l_j}||\big|\big)\bigg)\boldsymbol{1}\big( F(yL_{2i}^T(x_{S}))\leq y - \lfloor y \rfloor, \forall 2i \in S_0\big)\bigg]
\end{align}

\noindent \textbf{(viii)} Let $A=C$. Suppose 
\begin{align}\label{f-def}
\eta_{2m}(yx_1,x_2)& = f_{2m}(x_2-yx_1)\boldsymbol{1}(0 \leq x_2-yx_1 \leq 1)+  f_{2m}(1-x_2+yx_1)\boldsymbol{1}(x_2-yx_1<0 )
\end{align}
where $(x_1,x_2)\in [0,1]^2$. Then the limiting moments are given by the following formula:
\begin{align}\label{sc2}
 \beta_k(\mu_C) =  \displaystyle \sum_{b=1}^k & \sum_{r=0}^{b-1}\sum_{\underset{(r+1) \text{ even generating vertics}}{\sigma \in S_b(2k) \text{ with }}} y^r  a_{\sigma}\nonumber\\
 & \bigg[ \lfloor y \rfloor^{k-(r+1)} \int_{0}^{1}  \cdots  \int_{0}^{1}  \prod_{j=1}^b   \bigg(\prod_{i_j \in \mathcal{E}_{\boldsymbol{\omega}}}  \eta_{k_j}\big(x_{m_j}-yx_{l_j}\big)\nonumber\\
& \prod_{i_j \in \mathcal{O}_{\boldsymbol{\omega}}}\eta_{k_j}\big(x_{l_j}-yx_{m_j} \big)\bigg) + 
 \displaystyle \sum_{\phi \neq S_0\subset S^{-}}\lfloor y \rfloor^{|S^{-}- S_0|}\nonumber\\
  & \int_{0}^{1}  \cdots  \int_{0}^{1}  \prod_{j=1}^b  \bigg(\prod_{i_j \in \mathcal{E}_{\boldsymbol{\omega}}}  \eta_{k_j}\big(x_{m_j}-yx_{l_j}\big)
 \prod_{i_j \in \mathcal{O}_{\boldsymbol{\omega}}}\eta_{k_j}\big(x_{l_j}-yx_{m_j} \big)\bigg)\nonumber\\
 &  \boldsymbol{1}\big( F(yL_{2i}^T(x_{S}))\leq y - \lfloor y \rfloor, \forall 2i \in S_0\big)\bigg]
\end{align}
\end{proof}

\subsection{Applications of Theorem \ref{res:main}}\label{application-AAt}
 As the entries are dependent on $i,j,n$, the formula for the limiting moments, as seen in the proof of Theorem \ref{res:main},  can often be very complicated. Here we discuss a few special cases where the limiting moment formula is relatively simple. 
 
 Theorem \ref{res:main} concludes the convergence of the EESD of $S_A$. However, as we will see in the upcoming sections, a.s. convergence of the ESD can be obtained in some cases. To establish the a.s. convergence of the ESD in such cases, we will use Lemma \ref{lem:genmoment}, just as we did in case of the $S$ matrix. Recall the set $Q_{k,4}^b$ from \eqref{def-Q} that was used to establish the fourth moment condition for $S$. Analogous version of Lemma \ref{lem:moment} is not true 
for $S_A$ (see erratum \citep{bose2021some}). However, it can be shown that
\begin{equation}\label{four circuits}
|Q_{k,4}^b|\leq n^{2k+2} \ \ \ \text{for any } 1\leq b \leq 2k.
\end{equation}
For a proof of this fact, see Lemma 1.4.3 (a) in \citep{bose2018patterned}. Even though the proof given there is for the case where the entries are i.i.d., the arguments can be used to prove the same for the $S_{A}-$ link function as $1 \leq \pi(2i)\leq p$ and $1 \leq \pi(2i-1)\leq n$ and $p$ and $n$ are comparable for large $n$. Further, the same arguments can be adapted when the entries are independent and bounded. 

\subsubsection{General triangular i.i.d. entries}\label{triangular iid-AAt}

Let $A$ be one of the $p \times n$ patterned matrices mentioned in Section \ref{introduction}. Suppose for each fixed $n$ the input sequence $\{x_{i,n}: i \geq 0\}$   
are i.i.d. for every fixed $n$, with all moments finite. Assume that for all $k \geq 1$,
 \begin{equation}\label{ck-AAt}
  n \mathbb{E}[x_{0,n}^k]\rightarrow C_k \ \ \text{ as }\ n \rightarrow \infty.
 \end{equation}
Also assume that the moments of the random variable whose cumulants are $\{C_2,C_4,\ldots\}$ satisfy Carleman's condition. We can actually find such variables $\{x_{i,n}\}$ as discussed in Remark \ref{infinte divisible} in Section \ref{triangular iid}. Now observe that Assumption B (i), (ii) and (iii)  are satisfied with $t_n=\infty$ and $f_{2k}\equiv C_{2k}$ for $k \geq 1$. Thus Theorem \ref{res:main} can be applied to conclude that the EESD of $S_{A}$ converges to a probability distribution, $\mu_{A}$. A brief description of the limiting moments is given below.\\
\vskip3pt

\noindent (i) Suppose $A= T^{(s)}$ whose entries satisfy \eqref{ck-AAt}. Thus by part (i) of Theorem \ref{res:main}, the EESD of $S_{T_p^{(s)}}$ converges to $\mu_{T^{(s)}}$ whose moment sequence is given as follows (see \eqref{toe1-S}):
\begin{equation}\label{ck-toe-S}
\beta_k(\mu_{T^{(s)}})= \displaystyle \sum_{b=1}^k\sum_{r=0}^b y^r \sum_{\underset{(r+1) \text{ even generating vertices}}{\pi\in E_b(2k) \text{ with}}} \sum_{L_{\pi}^T} C_{\pi}\int_0^1 \int_0^1 \cdots\int_0^1 \boldsymbol{1}(0 \leq L_{i}^T(x_S)\leq 1, \forall i \in S^{\prime}).
\end{equation}
Note that since an even word can be identified as an even partition, for every $\pi\in E_b(2k)$, $L_{\pi}^T= L_{\boldsymbol {\omega}}^T$ for the corresponding even word with $b$ distinct letters.\\
\vskip2pt
\noindent (ii) By part (ii) of Theorem \ref{res:main} (see \eqref{toe2}):
\begin{equation}\label{ck-toe}
\beta_k(\mu_T)= \displaystyle \sum_{b=1}^k\sum_{r=0}^b y^r \sum_{\underset{(r+1) \text{ even generating vertices}}{\pi\in S_b(2k) \text{ with}}}  C_{\pi} \int_0^1 \int_0^1 \cdots\int_0^1 \boldsymbol{1}(0 \leq L_{i}^T(x_S)\leq 1, \forall i \in S^{\prime})\ dx_S.
\end{equation}
\noindent (iii) By part (iii) of Theorem \ref{res:main}, the EESD of $S_{H^{(s)}}$ converges to $\mu_{H^{(s)}}$ whose moment sequence is as in \eqref{ck-toe}, where the integrand is replaced by $\boldsymbol{1}(0 \leq L_{i}^H(x_S)\leq 1, \forall i \in S^{\prime})$ (see \eqref{han1}).\\
\vskip2pt
\noindent (iv) By part (iv) of Theorem \ref{res:main}, the EESD of $S_{H_p}$ converges to $\mu_{H}$ whose moment sequence is as in \eqref{ck-toe}, where the integrand is replaced by $\boldsymbol{1}(0 \leq L_{i}^H(x_S)\leq 1, \forall i \in S^{\prime}) f^{H}(s)$ (see \eqref{han2}).\\
\vskip2pt
\noindent(v) By part (v) of Theorem \ref{res:main}, the EESD of $S_{R_p^{(s)}}$ converges to $\mu_{R^{(s)}}$ whose moment sequence is given as in \eqref{ck-toe}, where the function inside the summation is (see \eqref{rev1}) $$C_{\pi}\big[ \lfloor y \rfloor^{k-(r+1)}\\+ \sum_{\phi\neq S_0\subset S^{-}}\lfloor y \rfloor^{|S^{-}\setminus S_0|}
\int_0^1 \int_0^1 \cdots\int_0^1 \boldsymbol{1}(F(yL_{2i}^H(x_S))\leq y-\lfloor y \rfloor, \forall 2i \in S_0) \ dx_S\big].$$ 

\noindent (vi) By part (vi) of Theorem \ref{res:main}, $\beta_k(\mu_R)$ is same as $\beta_k(\mu_{R^{(s)}})$, with an extra factor $f^H(s)$ in the integrand.\\
\vskip2pt
\noindent (vii) By part (vii) of Theorem \ref{res:main}, the EESD of $S_{C^{(s)}}$ converges to $\mu_{C^{(s)}}$ whose moment sequence is as in \eqref{ck-toe-S}, where the function inside the summation is (see \eqref{sc1}) $$a_{\pi} C_{\pi}\big[ \lfloor y \rfloor^{k-(r+1)}+ \sum_{\phi\neq S_0\subset S^{-}}\lfloor y \rfloor^{|S^{-}\setminus S_0|}
\int_0^1 \int_0^1 \cdots\int_0^1 \boldsymbol{1}(F(yL_{2i}^T(x_S))\leq y-\lfloor y \rfloor, \forall 2i \in S_0)f^H(x_S) \ dx_S\big].$$
\noindent (viii) By part (viii) of Theorem \ref{res:main}, $\beta_k(\mu_C)$ is as in \eqref{ck-toe}, where the function inside the summation is (see \eqref{sc2}) $$C_{\pi}\big[ \lfloor y \rfloor^{k-(r+1)}+ \sum_{\phi\neq S_0\subset S^{-}}\lfloor y \rfloor^{|S^{-}\setminus S_0|}\int_0^1 \int_0^1 \cdots\int_0^1 \boldsymbol{1}(F(yL_{2i}^T(x_S))\leq y-\lfloor y \rfloor, \forall 2i \in S_0)f^H(x_S) \ dx_S\big].$$ 


\begin{remark}\label{equality of LSD}
\noindent (a) The linear combinations $L_{i}^T$ and $L_{i}^H$ from Lemma \ref{lem:toe-S},  \ref{lem:toe} and \ref{lem:han} play crucial role in the moments of the LSD of $S_A$. Observe that for $S_T$ and $S_{H^{(s)}}$ (see Lemmas \ref{lem:toe} and \ref{lem:han}),  only symmetric words can contribute positively to the limiting moments. Now from the proof of part (i) of Lemma \ref{lem:han} and Lemma \ref{lem:toe}, it follows that when the word is symmetric, after having chosen the generating vertices ($\{v_S\}$), for every $i \in S^{\prime}$, $L_{i}^T(v_S)= L_{i,q}^H(z_S)$. As the $z_i$s are derived by elementary transformations that do not alter the integral, we have $L_i^T(v_S)=L_i^H(v_S)$, for symmetric word. This will be useful in finding relations between $\mu_{T},\mu_{H^{(s)}},\mu_{R^{(s)}},\mu_C$, that we discuss next.\\

\noindent (b) Suppose $y \in \mathbb{N}$. Then, observe that the integrals in parts (v) and (viii) above are zero and thus $$\beta_k(\mu_{R^{(s)}})=\beta_k(\mu_C)= \displaystyle \sum_{\pi \in S(2k)}y^{k-1}C_{\pi}.$$

Further, we can say more about these limits even when $y \notin \mathbb{N}$, using part (a). Recall that the words contributing to the limiting moments for $S_{T},S_{H^{(s)}},S_{R^{(s)}}$ ans $S_C$ are symmetric. Now, from (a) and uniqueness of the limit, it is easy to see that when the variables are triangular i.i.d. and satisfy \eqref{ck-AAt}, we have $\mu_{T}=\mu_{H^{(s)}}$ and $\mu_{R^{(s)}}=\mu_C$.
\end{remark}

\begin{remark}\label{sym-nonsym LSD}
However, in general the LSDs of $S_A$ for symmetric and the asymmetric cases are not identical. For instance, for the Toeplitz and the circulant matrices, this is evident from the moment formula, as the set of partitions that contribute positively to the limiting moments are different in the two cases.
For the Hankel and the reverse circulant, there is an extra factor in the integrand for the asymmetric versions that gives rise to the difference in the limit. We illustrate this for $S_{H^{(s)}}$ and $S_H$ below.  

For all words that are special symmetric, the contributions for the symmetric and asymmetric Hankel are same as there are no further restrictions for the signs arising from \eqref{H-sgn-indicator}. However, if $\boldsymbol{\omega}\in S_b(2k)\setminus SS_b(2k)$, some additional conditions do appear in case of asymmetric Hankel. 

For instance, let us consider the word $abcabc \in S_{3}(6)\setminus SS_{3}(6)$. In case of symmetric Hankel, its contribution to $\mu_{H^{(s)}}$ is 
\begin{align}\label{abcabc-S}
C_{2}^3\int_0^1\int_0^1\int_0^1\int_0^1 \boldsymbol{1}(0 \leq x_0+x_1-x_3, x_2x_0+x_3\leq 1) \ dx_0dx_1dx_2dx_3.
\end{align}
On the other hand, the contribution for the word, $abcabc$ (in case of asymmetric Hankel) to $\mu_H$ is 
\begin{align}\label{abcabc}
C_{2}^3 \int_0^1\int_0^1\int_0^1\int_0^1 & \boldsymbol{1}(0 \leq x_0+x_1-x_3, x_2x_0+x_3\leq 1)\boldsymbol{1}\big(\sgn(x_1-x_0)= \nonumber\\
& \sgn(2x_3-x_0-x_1),\sgn(x_1-x_2)=\sgn(x_2-2x_0-x_1+2x_3),\nonumber\\
& \sgn(x_3-x_2)=\sgn(x_2-2x_0+x_3)\big) \ dx_0dx_1dx_2dx_3.
\end{align}
The integrand in \eqref{abcabc} is less than that in \eqref{abcabc-S} due to the extra restrictions arising from the sign functions. Thus, the $k$th moment of $\mu_{H}$ is in general smaller than that of $\mu_{H^{(s)}}$. A very similar thing occurs in case of $\mu_{R^{(s)}}$ and $\mu_R$.
\end{remark}
\subsubsection{Sparse triangular i.i.d. entries}\label{sparse entries} 
Suppose the input sequence  $\{x_{i,n}: \ i \geq 0\}$ are $\mbox{Ber}(p_n)$ where $np_n \rightarrow \lambda>0$. 
Then  \eqref{ck-AAt} is satisfied with $C_k=\lambda$ for all $k\geq 1$.
Therefore from the discussion  in Section \ref{triangular iid-AAt}, the EESD of $S_{A}$ converges to say $\mu_A$ whose moments are as in (i)-(viii) in Section  \ref{triangular iid-AAt}, where $C_{\pi}=\lambda^{|\pi|}$ for all $\pi\in \mathcal{P}(2k)$.

\subsubsection{I.i.d. Entries} 
	\citep{bose2010limiting} established the LSD of $S_A$ when $A$ is the asymmetric or symmetric versions of Toeplitz,  Hankel, circulant and reverse circulant matrices with entries $\{\frac{1}{\sqrt n}{x_{i}}\}$, where $x_{i}$ are independent and identically distributed with mean 0 and variance 1.	
	. 
Here we show how these LSD results of \citep{bose2010limiting} can be obtained as special cases of Theorem \ref{res:main}.  

First, observe that just like the $S$ matrix, but now dealing with a single sequence of random variables, $\{x_{i,n}\}$, we can show that conditions (i), (ii) and (iii) of Assumption B hold with $f_2\equiv 1$ and $f_{2k}\equiv 0$ for all $k \geq 2$. Then from Theorem \ref{res:main}, we obtain the convergence of the EESD.

The moment formulae are given as in (i)-(viii) in Section \ref{triangular iid-AAt}, where $C_{2}=1$ and $C_{2k}= 0$ for all $k \geq 2$. Thus the words that contribute to the limiting moments are now pair matched. Hence the moments are indeed equal to the ones in \citep{bose2010limiting}. 

 Now, as $S_A$ satisfies \eqref{four circuits}, we have 
\begin{align}\label{fourthmoment-noniid}
& \frac{1}{p^{4}} \mathbb{E}\big[\Tr(S_{A}^k)  -  \mathbb{E}(\Tr(S_{A}^k))\big]^4 = \mathcal{O}(p^{-2})\ \ \text{ and therefore,}\nonumber\\
&  \displaystyle \sum_{p=1}^{\infty}\frac{1}{p^{4}} \mathbb{E}\big[\Tr(S_{A}^k)  -  \mathbb{E}(\Tr(S_{A}^k))\big]^4 < \infty \ \ \ \text{ for every } k\geq 1.
\end{align}
Then using Lemma \ref{lem:genmoment}, we can conclude that $\mu_{S_{A}}$  converges a.s.
 
%
%
 
 \subsubsection{Matrices with variance profile}\label{var-prof}
Suppose the input sequence is $\{\sigma(i/n)x_{i,n};i\geq 0\}$, where $\sigma: [0,1]\rightarrow \mathbb{R}$ is a bounded and Riemann integrable function and   $\{x_{i,n};i \in \mathbb{Z}\}$ are  i.i.d. random variables with mean zero and all moments finite. Assume that $\{x_{i,n};i\geq 0\}$ satisfy \eqref{ck-AAt}. Then the EESD of $S_{A}$ for each of the eight patterns of $A$, converges to a probability distribution whose moments are determined by $\sigma$ and $\{C_{2k}, k \geq 1\}$. This follows from Theorems \ref{res:main} as argued below:

 First observe that the entries  of $A$ satisfy  Assumption B (i) and (ii)  with $t_n=\infty$, $f_{2k} = \sigma^{2k} C_{2k}, \ k \geq 1$. Since $\sigma$ is bounded, Assumption B (iii) is also true. Hence from Theorem \ref{res:main}, we can conclude that the EESD of $S_{A}$ converges. 

 Note that in the i.i.d. situation where each $x_{i,n}$ has the same distribution $F$ for all $i$ and $n$, $C_{2k}=0$ for all $k\geq 2$. Hence the EESD of $S_{A}$ converges.
As $\sigma$ is bounded, \eqref{four circuits} and hence \eqref{fourthmoment-noniid} hold true. Thus we can conclude that $\mu_{S_{A}}$ converges a.s. to the respective limits.
 \subsubsection{Triangular Matrices}
As discussed in Section \ref{var-prof-smatrix}, the LSD of triangular matrices have been studied in \citep{Dykema2002DToperatorsAD}, where the entries of the matrix are i.i.d. Gaussian. Later LSD results were proved in \citep{basu2012spectral} for triangular matrices with other patterns such as Hankel, Toeplitz and symmetric circulant, and with i.i.d. input. The matrices that the authors considered are symmetric and hence the entries $y_{L(i,j),n}$ are of the form $y_{L(i,j),n}= x_{L(i,j),n}\boldsymbol{1}(i+j\leq n+1)$. However, the matrix considered in \citep{Dykema2002DToperatorsAD} is upper triangular, as in \eqref{triangular matrix}. It is natural to ask what happens to such matrices when there are other patterns involved.

 Let $A$ be any of the eight matrices that are being discussed in this article. Let $A^U$ be the matrix whose entries $y_{L(i,j),n}$ are as follows:
\begin{align}\label{triangular}
y_{L(i,j),n}= \begin{cases}
x_{L(i,j),n} & \ \ \text{ if } i\leq j\\
0       & \ \ \text{ otherwise}.
\end{cases}
\end{align}

Then we have the following result.
\begin{result}\label{res:triangular}
Consider the matrices $A^U$. Assume that the variables $\{x_{i,n};i\geq 0\}$ in \eqref{triangular} are i.i.d. random variables with all moments finite, for every fixed $n$.  Also assume that $\{x_{i,n};i\geq 0\}$ satisfy \eqref{ck-AAt}. Then, for each of the eight matrices mentioned above, the EESD of $S_{A^U}$ converges to some probability measure $\mu_{A^U}$ that depends on $\{C_{2k}\}_{k\geq 1}$. 
\end{result}
 \begin{proof}
Define the function $\sigma$ on $[0,1]^2$ as 
$$\sigma(x,y)=\left\{\begin{array}{ll}
1 & \mbox{if }x\leq y,\\
0 & \mbox{otherwise}.
\end{array}
\right.$$ 
Now observe that the entries $y_{L(i,j),n}$ of the matrix $A_n^U$ can be written as $\sigma(i/p,j/n)x_{L(i,j),n}$.

Following the  proofs in Theorem \ref{res:main}, it is easy to see that  the first moment condition holds for $S_{A^U}$. As $||\sigma||\leq 1$, the Carleman's condition also holds for the limiting moment sequence.  
Hence the EESD of $S_{A^U}$ converges to a probability measure $\mu_A$.

\end{proof}

\begin{remark}
\noindent (i) If the entries of $A^U$ are $\frac{y_{i,n}}{\sqrt{n}}$ where $\{y_{i,n};i \geq 0\}$ are as in \eqref{triangular} and $\{x_{i,n};i\geq 0\}_{n \geq 1}$ are i.i.d. random variables with mean 0 and variance 1, then using familiar truncation arguments (as in Theorem 8.1.2 in \citep{bose2018patterned}), the variables $\{y_{i,n};i \geq 0\}$ can be assumed to be uniformly bounded and hence satisfy \eqref{ck-AAt} with $C_2= 1$ and $C_{2k}= 0$ for  $k\geq 2$. Hence from Result \ref{res:triangular}, we obtain the convergence of the EESD. Again it can be verified that \eqref{four circuits} and \eqref{fourthmoment-noniid} are true in this case. Thus the ESD of $S_{A^U}$ converges a.s. to a non-random probability measure.
\end{remark}

\subsubsection{Band matrices}
 Band matrices had been discussed previously in \citep{basak2011limiting}, \citep{liu2011limit}, \citep{popescu2009general},  \citep{liu2011limit} and others. In Section 7.4 in \citep{bose2021some}, the LSD of band matrices where the non-zero entries satisfy \eqref{ck-AAt} had been studied. So it was natural to ask what happens to the LSD of the $A^bA^{bT}, A^BA^{BT}$, where $A^b$ and $A^B$ are matrices with entries $y_{L(i,j)}= x_{L(i,j)}\boldsymbol{1}(L(i,j)\leq m_n)$ and $y_{L(i,j)}= x_{L(i,j)}[\boldsymbol{1}(L(i,j)\leq m_n )+\boldsymbol{1}(L(i,j)\geq n-m_n )$ (see Section 7.4 in \citep{bose2021some}). Here we provide an answer to that question. 
 \begin{result}\label{banding-AAt}
Consider the matrices $A^b$ and $A^B$. Assume that the variables $\{x_{i,n};i\geq 0\}$ associated with the matrices $A^U$ (as in \eqref{triangular}) are i.i.d. random variables with all moments finite, for every fixed $n$ and satisfy \eqref{ck-AAt}. Suppose $\displaystyle \alpha=  \lim_{n \rightarrow \infty}\frac{m_n}{n}>0$. Then, for each of the eight matrices, the EESD of $S_{A^b}$ and $S_{A^B}$ converge to some probability measures $\mu_{\alpha}^b$  and $\mu_{\alpha}^B$ that depend on $\{C_{2k}\}_{k\geq 1}$.   
\end{result}
 For $A^b$, the entries $y_{i}$ can be written as $\sigma_n\big({i}/{n}\big)x_{i}$, where $\sigma_n(x)=\boldsymbol{1}(x \leq \frac{m_n}{n})$. Observe that for $k \geq 1$, $\int \sigma_n^{k}(x)\  dx \rightarrow \int \sigma_0^{k}(x)\  dx$ as $n \rightarrow \infty$, where $\sigma_0(x)= \boldsymbol{1}(x \leq \alpha)$.

For the Type II band versions $R^{(s)B}$ of $R^{(s)}$ and $T^B$ of $T^{(s)}$,  the entries $y_{i}$ can be written as $\sigma_n\big({i}/{n}\big)x_{i}$, where $\sigma_n(x)=\boldsymbol{1}(\frac{m_n}{n}\leq x \leq 1-\frac{m_n}{n})$. Clearly, $\int \sigma_{n}^{k}(x) \ dx \rightarrow \int \sigma_1^{k}(x) \ dx$ as $n \rightarrow \infty$ for each $k \geq 1$ where $\sigma_1= \boldsymbol{1}_{[0,\alpha] \cup [1-\alpha,1]}$. For the Type II band versions $H^B$ of $H^{(s)}$,  the entries $y_{i}$ can be written as $\sigma_n\big({i}/{n}\big)x_{i}$, where $\sigma_n(x)=\boldsymbol{1}(1-m_n/n\leq x \leq 1 + m_n/n)$. Clearly, $\sigma_n$ converges to $\sigma_2= \boldsymbol{1}_{[1-\alpha,1+\alpha]}$. 
 
 Thus in all of the above cases, Assumption B is true with $t_n=\infty$ and $g_{2k}= \sigma_t^{2k}C_{2k}$, $t=0,1, \text{ or } 2$. Thus from Theorem \ref{res:main} the result follows.

In this case too if ${x_{i}}$ are all i.i.d. and the entries of the matrices are $\{\frac{y_{i}}{\sqrt{m_n}}\}$, then additionally \eqref{four circuits} and thereby \eqref{fourthmoment-noniid} holds. Thus the a.s. convergence of the ESDs can be concluded.

%

\section{Appendix}\label{appendix}
\begin{lemma}
Suppose $\{x_{ij};1\leq i \leq p, 1 \leq j \leq j\leq n\}$ are independent variables that satisfy Assumption A and $y_{ij}= x_{ij}\boldsymbol{1}_{[|x_{ij}|\leq t_n]}$. Then 
\begin{itemize}
\item[(i)] $\displaystyle\frac{1}{p} \sum_{i,j} (y_{ij}^2-\mathbb{E}[y_{ij}^2]) \rightarrow 0  \ \ \text{ a.s. as } p \rightarrow \infty. $
\item[(ii)] Additionally if, $\frac{1}{p} \displaystyle \sum_{i,j} x_{ij}^2\boldsymbol{1}_{[|x_{ij}|> t_n]} \rightarrow 0$ a.s. (or in probability), then $\limsup_p \displaystyle\frac{1}{p} \sum_{i,j} x_{ij}^2<\infty$ a.s. (or in probability).  
\end{itemize}

\end{lemma}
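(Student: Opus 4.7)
\medskip

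\noindent\textbf{Proof proposal.}

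For part (i), my plan is to invoke Borel–Cantelli via a fourth-moment Markov inequality. Set $Y_{ij}=y_{ij}^2-\mathbb{E}[y_{ij}^2]$. These random variables are independent with mean zero, and by Assumption A(i) with $k=2,4$ one has $\mathbb{E}[y_{ij}^{2k}]=n^{-1}g_{2k,n}(i/p,j/n)=O(n^{-1})$ uniformly in $i,j$ since $g_{2k,n}$ converges uniformly to the bounded function $g_{2k}$. Hence $\mathbb{E}[Y_{ij}^{2}]\leq \mathbb{E}[y_{ij}^{4}]=O(n^{-1})$ and similarly $\mathbb{E}[Y_{ij}^{4}]\leq \mathbb{E}[y_{ij}^{8}]=O(n^{-1})$. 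Expanding
\begin{equation*}
\mathbb{E}\Bigl[\Bigl(\sum_{i,j}Y_{ij}\Bigr)^{4}\Bigr]=\sum_{i,j}\mathbb{E}[Y_{ij}^{4}]+3\sum_{(i,j)\neq(k,l)}\mathbb{E}[Y_{ij}^{2}]\mathbb{E}[Y_{kl}^{2}],
\end{equation*}
all other terms vanish by independence and mean zero. The first sum is $O(pn\cdot n^{-1})=O(p)$ and the second is $O((pn)^{2}\cdot n^{-2})=O(p^{2})$, giving a total of $O(p^{2})$. Thus for every $\varepsilon>0$,
\begin{equation*}
\mathbb{P}\Bigl(\Bigl|\frac{1}{p}\sum_{i,j}Y_{ij}\Bigr|>\varepsilon\Bigr)\leq \frac{1}{p^{4}\varepsilon^{4}}\mathbb{E}\Bigl[\Bigl(\sum_{i,j}Y_{ij}\Bigr)^{4}\Bigr]=O(p^{-2}),
\end{equation*}
which is summable in $p$, so Borel–Cantelli yields the a.s.\ statement.

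For part (ii), I decompose
\begin{equation*}
\frac{1}{p}\sum_{i,j}x_{ij}^{2}=\frac{1}{p}\sum_{i,j}y_{ij}^{2}+\frac{1}{p}\sum_{i,j}x_{ij}^{2}\mathbf{1}_{[|x_{ij}|>t_n]}.
\end{equation*}
The second summand tends to $0$ a.s.\ (or in probability) by hypothesis. For the first, part (i) reduces the task to controlling the deterministic quantity
\begin{equation*}
\frac{1}{p}\sum_{i,j}\mathbb{E}[y_{ij}^{2}]=\frac{1}{pn}\sum_{i,j}g_{2,n}(i/p,j/n).
\end{equation*}
Since $g_{2,n}\to g_2$ uniformly on $[0,1]^2$, with $g_2$ bounded Riemann integrable, and $p/n\to y$, this Riemann sum converges to $y\int_{[0,1]^{2}}g_{2}(x,t)\,dx\,dt<\infty$. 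Hence $\limsup_p\frac{1}{p}\sum_{i,j}y_{ij}^{2}$ is a.s.\ finite, and combining with the vanishing of the truncation remainder gives the conclusion.

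The only nontrivial point is the fourth-moment bookkeeping in part (i); once the crude bounds $\mathbb{E}[Y_{ij}^{2k}]=O(n^{-1})$ supplied by Assumption A(i) are in hand, the combinatorial count of surviving terms is routine and gives the summable $O(p^{-2})$ tail required for Borel–Cantelli.
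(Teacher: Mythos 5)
Your proof matches the paper's argument essentially verbatim: the same fourth-moment Markov plus Borel--Cantelli bound for (i), with the diagonal term contributing $O(p^{-3})$ and the paired term $O(p^{-2})$ after dividing by $p^4$, and the same decomposition $x_{ij}^2=y_{ij}^2+x_{ij}^2\boldsymbol{1}_{[|x_{ij}|>t_n]}$ combined with the convergence of $\frac1p\sum_{i,j}\mathbb{E}[y_{ij}^2]$ for (ii). Two cosmetic slips worth fixing: the inequality $\mathbb{E}[Y_{ij}^4]\leq\mathbb{E}[y_{ij}^8]$ is not literally valid (a $C_r$-type bound $\mathbb{E}[Y_{ij}^4]\leq 8\bigl(\mathbb{E}[y_{ij}^8]+(\mathbb{E}[y_{ij}^2])^4\bigr)$ gives the same $O(n^{-1})$ order), and the Riemann sum $\frac{1}{pn}\sum_{i,j}g_{2,n}(i/p,j/n)$ converges to $\int_{[0,1]^2}g_2$ with no factor of $y$; neither affects the conclusion.
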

\begin{proof}
\begin{itemize}
\item[(i)] Let $\epsilon>0$ be fixed. Then 
\begin{align*}
\mathbb{P}\bigg[\big|\frac{1}{p}\displaystyle \sum_{i,j} (y_{ij}^2-\mathbb{E}[y_{ij}]^2)\big|>\epsilon\bigg]& \leq \frac{1}{\epsilon^4p^4} \mathbb{E}\bigg[ \big(\displaystyle \sum_{i,j} y_{ij}^2-\mathbb{E}[y_{ij}^2])\big)^4\bigg]\\
& = \frac{1}{\epsilon^4p^4} \mathbb{E}\bigg[ \displaystyle \sum_{\underset{j_1,j_2,j_3,j_4}{i_1,i_2,i_3,i_4}} \prod_{l=1}^4\big(y_{i_lj_l}^2 - \mathbb{E}[y_{i_lj_l}^2])\big)^4\bigg].
\end{align*}
As $\{y_{ij}\}$ are independent, the above inequality becomes
\begin{align*}
\mathbb{P}\bigg[\big|\frac{1}{p}\displaystyle \sum_{i,j} (y_{ij}^2-\mathbb{E}[y_{ij}]^2)\big|>\epsilon\bigg]& \leq \frac{1}{\epsilon^4 p^4} \sum_{ij} \mathbb{E}\big[(y_{ij}^2- \mathbb{E}[y_{ij}^2])^4\big] \\ & \  + 6 \frac{1}{\epsilon^4 p^4} \sum_{\underset{j_1,j_2}{i_1,i_2}} \mathbb{E}\big[(y_{i_1j_1}^2- \mathbb{E}[y_{i_1j_1}^2])^2 (y_{i_2j_2}^2- \mathbb{E}[y_{i_2j_2}^2])^2\big].
\end{align*}
Now from \eqref{gkeven}, as $\{g_{2k,n}\}$ are bounded integrable, the first term in the rhs of the above inequality is $\mathcal{O}(\frac{1}{p^3})$ and the second term is $\mathcal{O}(\frac{1}{p^2})$. Therefore, 
\begin{align*}
\displaystyle \sum_p \mathbb{P}\bigg[\big|\frac{1}{p}\displaystyle \sum_{i,j} (y_{ij}^2-\mathbb{E}[y_{ij}]^2)\big|>\epsilon\bigg]< \infty.
\end{align*}
Hence by Borel-Cantelli lemma, $\displaystyle\frac{1}{p} \sum_{i,j} (y_{ij}^2-\mathbb{E}[y_{ij}^2]) \rightarrow 0 \ \ \ \text{ a.s. as } p \rightarrow \infty.$
\end{itemize}

\item[(ii)] Observe that $\displaystyle \sum_{i,j} x_{ij}^2= \sum_{i,j} \big(y_{ij}^2 +x_{ij}^2\boldsymbol{1}_{[|x_{ij}|> t_n]}\big)$. Also note that $\frac{1}{p}\displaystyle\sum_{i,j}\mathbb{E}[y_{ij}]^2 \rightarrow \int g_2(x,y)\ dx \ dy$ as $n,p \rightarrow \infty$. Then by the condition $\frac{1}{p}\displaystyle \sum_{i,j} x_{ij}^2\boldsymbol{1}_{[|x_{ij}|> t_n]} \rightarrow 0$ a.s. (or in probability) and (i), (ii) holds true. 
\end{proof} 

\begin{proof}[\textbf{Proof of Lemma \ref{lem:rev}}]
First suppose $\boldsymbol {\omega} \in \mathcal{P}(2k) \setminus S_b(2k)$. Then from \eqref{SA-S} and Lemma 5.1 in \citep{bose2021some}, and using the fact that $p/n \rightarrow y>0$ as $n\rightarrow \infty$, it is easy to see that $$\displaystyle \lim_{n \rightarrow \infty}\frac{1}{p^{r+1}n^{b-r}} | \Pi_{S_{R^{(s)}}}(\boldsymbol {\omega})|=\lim_{n \rightarrow \infty}\frac{1}{p^{r+1}n^{b-r}} | \Pi_{S_{R}}(\boldsymbol {\omega})|= 0.$$  
Now suppose $\boldsymbol{\omega}$ is a symmetric word with $b$ distinct letters and $(r+1)$ even generating vertices. Suppose $i_1,i_2,\ldots,i_b$ are the positions where new letters made their first appearances. First we fix the generating vertices $\pi(i_j), 0 \leq j\leq b$ where $\pi(i_0)=\pi(0)$. Let 
\begin{align*}
t_i= \pi(i)+ \pi(i-1)  \text{ for } 1 \leq i \leq 2k.
\end{align*}
Now let us first consider the symmetric Reverse circulant link function. 

From \eqref{SA-link}, $\boldsymbol{\omega}[i]= \boldsymbol{\omega}[j]$ if and only if $t_i=t_j\ (\text{mod }n).$ Clearly, for $\boldsymbol{\omega}$, $t_{i_1}=t_1$ and for every $1 \leq i \leq 2k$, 
\begin{equation}\label{S-rc1}
t_i\ =\ t_{i_j}\ (\text{mod}\ n) \  \  \ \text{for some }j \in \{1,2,\ldots,b\}.
\end{equation}
First we fix the generating vertices $\pi(i_j),\ j=0,1,2,\ldots,b$. Let $S=\{\pi(i_j):0 \leq  j \leq b\}  \ \mbox{ and }\ S^{\prime}=\{i : \pi(i) \notin S\}$. For every $i \in S^{\prime}$,
\begin{align}\label{S-rc2}
\pi(i)= & (t_{i_j}+ \pi(i-1))\ (\text{mod } n) \ \ \text{ for some } i_j,j\in \{1,2, \ldots, b\}\nonumber\\
i.e., \pi(i)=& \displaystyle \sum_{j<i} \alpha_{ij}\pi(i_j)\ (\text{mod } n)\ \ \text{ for some }\alpha_{ij}\in \mathbb{Z}
\end{align}
Thus for every $i\in S^{\prime}\setminus \{2k\} $, there exists unique integer $m_{i,n}$ such that 
\begin{align}\label{S-rc3}
 1 \leq \displaystyle \sum_{j<i} \alpha_{ij}\pi(i_j)+ m_{i,n}\leq n.
\end{align}
As we have already fixed the generating vertices, from \eqref{S-rc2} and \eqref{S-rc3} it follows that there is a unique choice for $\pi(2i-1),1 \leq i \leq k$ such that $2i-1\in S^{\prime}$. For all $2i\in S^{\prime}, 1 \leq i< k$, we can have $\lfloor y_n \rfloor$ choices as $1 \leq \pi(2i) \leq p$ and $y_n=p/n$. Moreover, there is an additional choice if 
\begin{align}\label{S-rc4}
\displaystyle \sum_{j<2i} \alpha_{2ij}\pi(i_j)+ m_{2i,n}\leq p - \lfloor y_n \rfloor n.
\end{align}
Next let
  $$
  v_{2i}=  \frac{\pi(i)}{p},\ \ v_{2i-1}=  \frac{\pi(i)}{n}  \text{ for }0 \le i \leq k,\ \\
  L(a)= \max\{m \in \mathbb{Z}\}, \ F(a)= a- L(a). 
   $$
 Also let 
 \begin{equation}\label{def-S-}
  S^{-}= \{2i: 2i\notin S^{\prime} \ \text{ and \eqref{S-rc4} holds true}\}.
 \end{equation}
Now observe that from \eqref{S-rc3} and \eqref{S-rc4} it follows that for every $i \in S^{-}$,
 \begin{align}
 F(y_n L_{2i,n}^H(v_{S}))\leq y_n - \lfloor y_n \rfloor
 \end{align}
where $L_{2i,n}^H$ is the set of linear combinations defined in \eqref{linear-han}. 
 
From \eqref{SA-Pi(omega)} and the discussion above, it is easy to see that for a word $\boldsymbol{\omega}$ of length $2k$, 
\begin{align*}
\frac{1}{p^{r+1}n^{b-r}}\big|\Pi_{S_{R^{(s)}}}(\boldsymbol{\omega})\big|={\lfloor y_n \rfloor}^{k-(r+1)} + \displaystyle \sum_{\phi \neq S_0\subset S^{-}}\lfloor y_n \rfloor^{|S^{-}- S_0|}\big|\big\{v_S: F(y_n L_{2i,n}^H(v_{S}))\leq y_n - \lfloor y_n \rfloor, \textbf{1}(2i \in S_0)\big\}\big|.
 \end{align*} 
 Therefore as $n \rightarrow \infty$,
 \begin{align}\label{S-rc-integral}
 \displaystyle \lim_{n \rightarrow \infty}\frac{1}{p^{r+1}n^{b-r}}\big|\Pi_{S_{R^{(s)}}}(\boldsymbol{\omega})\big|=& \lfloor y \rfloor^{k-(r+1)} + \displaystyle \sum_{\phi \neq S_0\subset S^{-}}\lfloor y \rfloor^{|S^{-}- S_0|} \nonumber\\ 
 &\int_0^1\int_0^1 \cdots \int_0^1 \textbf{1}\big(F(y L_{2i}^H(v_{S}))\leq y - \lfloor y \rfloor) \ \forall 2i\in S_0\big) \ dv_S
 \end{align}
 where $d{v_S}= \prod_{j=0}^b dv_{i_j}$ is the $(b+1)-$dimensional Lebesgue integral on $[0,1]^{b+1}$ and $S^{-}$ is as in \eqref{def-S-}.
 
When $y\geq 1$, the rhs of \eqref{S-rc-integral} is positive. We next show that when $y<1$, the value of the integral $\int_0^1\int_0^1 \cdots \int_0^1 \textbf{1}\big(F(y L_{2i}^H(v_{S}))\leq y - \lfloor y \rfloor) \ \forall 2i\in S^{-}\big) \ dv_S$ is positive. 

First note that as $y<1, \lfloor y \rfloor=0$. Now note that we had previously established in the proof of part (i) of Lemma \ref{lem:han} that for certain values of $v_S\in [0,1]^{b+1}$, $\textbf{1}(0 \leq L_{i}^H(v_S)\leq 1, \forall i \in S^{\prime})=1$. As, $\{v_S: 0 \leq L_{2i}^H(v_S)\leq 1, \forall i \in S^{-}\} \subset \{v_S: 0 \leq L_{i}^H(v_S)\leq 1, \forall i \in S^{\prime}\}$, for these chosen values of $v_S$, we have   $\textbf{1}(0 \leq L_{2i}^H(v_S)\leq 1, \forall 2i \in S^{-})=1$. Therefore, with this choice of $v_S\in [0,1]^{b+1}$,$$yL_{2i}^H(v_S)\leq y< 1 \implies F(yL_{2i}^H(v_S))\leq y.$$
 That is, the integral $\int_0^1\int_0^1 \cdots \int_0^1 \textbf{1}\big(F(y L_{2i}^H(v_{S}))\leq y - \lfloor y \rfloor) \ \forall 2i\in S^{-}\big) \ dv_S$ is positive. 

Hence the proof of part (i) is complete.

To prove part (ii), observe that 
\begin{align*}
& \xi_{\pi}(i)= \xi_{\pi}(j) \ \ \text{ if and only if }\ \ t_i=t_j \ (\text{mod } n) \ \ \text{ and }\\
&  \sgn(\pi(i)-\pi(i-1))= \sgn(\pi(j)-\pi(j-1)) \ \ \text{ if } i \ \text{ and } j \ \text{ are of same parity, or}\\
& \sgn(\pi(i)-\pi(i-1))= \sgn(\pi(j-1)-\pi(j)) \ \ \text{ if } i \ \text{ and } j \ \text{ are of opposite parity}.
\end{align*}

As $\Pi_{S_{R}}(\boldsymbol{\omega})\subset \Pi_{S_{R^{(s)}}}(\boldsymbol{\omega})$. If $\boldsymbol {\omega}$ is a word with $b$ distinct letters but not symmetric, by part (i), $\frac{1}{p^{r+1}b^{n-r}}\big|\Pi_{S_{R}}(\boldsymbol{\omega})\big| \rightarrow 0$ as $n \rightarrow \infty$.
 
Next let $\boldsymbol {\omega} \in S_b(2k)$ with $(r+1)$ even generating vertices. Clearly for $|\mathcal{E}_{\boldsymbol{\omega}}|=r+1$ and $|\mathcal{O}_{\boldsymbol{\omega}}|=b-r$. Now suppose, 

Recall the sets $\mathcal{E}_{\boldsymbol{\omega}}, \mathcal{O}_{\boldsymbol{\omega}},C_{i_j}^e, C_{i_j}^{o}$ from \eqref{def-even gen}, \eqref{def-odd gen} and \eqref{def-odd-even}. Similarly we can define the functions $f_n^H$ and $f^H$. Thus we can conclude 
\begin{align}\label{rc-integral}
 \displaystyle \lim_{n \rightarrow \infty}\frac{1}{p^{r+1}n^{b-r}}\big|\Pi_{S_{R}}(\boldsymbol{\omega})\big|=& \lfloor y \rfloor^{k-(r+1)} + \displaystyle \sum_{\phi \neq S_0\subset S^{-}}\lfloor y \rfloor^{|S^{-}- S_0|} \nonumber\\ 
 &\int_0^1\int_0^1 \cdots \int_0^1 \textbf{1}\big(F(y L_{2i}^H(v_{S}))\leq y - \lfloor y \rfloor) \ \forall 2i\in S_0\big)f^H(v_S) \ dv_S
 \end{align}
 where $d{v_S}= \prod_{j=0}^b dv_{i_j}$ is the $(b+1)-$dimensional Lebesgue integral on $[0,1]^{b+1}$ and $S^{-}$ is as in \eqref{def-S-}.

This completes the proof of part (ii).
 \end{proof}

\begin{proof}[\textbf{Proof of Lemma \ref{lem:c-sc}}]
 
 \noindent (i) First suppose $\boldsymbol {\omega} \in \mathcal{P}(2k) \setminus E_b(2k)$. Then from \eqref{SA-S} and Lemma 5.2 in \citep{bose2021some}, and using the fact that $p/n \rightarrow y>0$ as $n\rightarrow \infty$, it is easy to see that $$\displaystyle \lim_{n \rightarrow \infty}\frac{1}{p^{r+1}n^{b-r}} | \Pi_{S_{C^{(s)}}}(\boldsymbol {\omega})|=0\ \ \text{if}\ \boldsymbol {\omega}.$$  
Now suppose $\boldsymbol{\omega}$ is an even word of length $2k$ with $b$ distinct letter and $(r+1)$ even generating vertices. Suppose $i_1,i_2,\ldots,i_b$ are the positions where new letters made their first appearances. First we fix the generating vertices $\pi(i_j), 0 \leq j\leq b$ where $\pi(i_0)=\pi(0)$. Let
\begin{align*} 
s_i= \pi(i)- \pi(i-1)  \  \  \text{for }1 \leq i \leq 2k.
\end{align*}
Clearly, from \eqref{SA-link}, $\boldsymbol {\omega}[i]=\boldsymbol {\omega}[j]$ 
if and only if $\xi_{\pi}(i)=\xi_{\pi}(j)$. That is, 
$|s_i| =  |s_j| \ (\text{mod } n)$, that is, $s_i  =  s_j \ (\text{mod } n)$ or   $s_i  =- s_j \ (\text{mod } n) .$
  Thus for every $i,\ 1 \leq i \leq 2k$,
\begin{equation}\label{SCplus_minus}
s_i = s_{i_j}\ (\text{mod } n) \mbox{ or }	s_i =  - s_{i_j}\ (\text{mod } n)
\end{equation} 
for some $j \in \{1,2,\ldots,b\}$. Thus, we have 
 \begin{align}\label{SC-relation}
 \pi(i)  &=  \pm(\pi(i_j)-\pi(i_j-1))+\pi(i-1)=\pm s_{i_j}+ \pi(i-1)\ (\text{mod } n) \ \ \text{for some }j.
  \end{align}
First we fix the generating vertices $\pi(i_j),\ j=0,1,2,\ldots,b$. Let $S=\{\pi(i_j):0 \leq  j \leq b\}  \ \mbox{ and }\ S^{\prime}=\{i : \pi(i) \notin S\}$. Having chosen a particular sign in \eqref{SC-relation}, for every $i \in S^{\prime}$, there is some $i_j, 1 \leq j \leq b$,
\begin{align}\label{SC-1}
\pi(i)&=  (s_{i_j}+ \pi(i-1))\ (\text{mod } n) \  \text{ or } \pi(i)=  (-s_{i_j}+ \pi(i-1))\ (\text{mod } n) \nonumber\\
i.e.,& \pi(i)= \displaystyle \sum_{j<i} \beta_{ij}\pi(i_j)\ (\text{mod } n)\ \ \text{ for some } \{\beta_{ij}\} \subset \mathbb{Z}
\end{align}
Thus for every $i\in S^{\prime}\setminus \{2k\} $, there exists a unique integer $m_{i,n}$ such that 
\begin{align}\label{SC-2}
 1 \leq \displaystyle \sum_{j<i} \beta_{ij}\pi(i_j)+ m_{i,n}\leq n.
\end{align}
As we have already fixed the generating vertices, from \eqref{SC-1} and \eqref{SC-2} it follows that there is a unique choice for all $\pi(2i-1),1 \leq i \leq k$ such that $2i-1\in S^{\prime}$. For all $2i\in S^{\prime}, 1 \leq i< k$, we can have $\lfloor y_n \rfloor$ choices as $1 \leq \pi(2i) \leq p$ and $y_n=p/n$. Moreover, there is an additional choice if 
\begin{align}\label{SC-3}
\displaystyle \sum_{j<2i} \beta_{2ij}\pi(i_j)+ m_{2i,n}\leq p - \lfloor y_n \rfloor n.
\end{align}
Next let
  $$
  v_{2i}=  \frac{\pi(i)}{p},\ \ v_{2i-1}=  \frac{\pi(i)}{n} \ \ \text{ for }0 \le i \leq k,\ \\
 \ \  L(a)= \max\{m \in \mathbb{Z}\}, \ F(a)= a- L(a). 
   $$
 Also let 
 \begin{equation}\label{def-S-sc}
 S^{-}= \{2i: 2i\notin S^{\prime} \ \text{ and \eqref{SC-3} holds true}\}
 \end{equation}

 Now observe that from \eqref{SC-2} and \eqref{SC-3} it follows that for every $i \in S^{-}$,
 \begin{align}
 F(y_n L_{2i,n}^T(v_{S}))\leq y_n - \lfloor y_n \rfloor
 \end{align}
where $L_{2i,n}^T$ is the linear combination defined in \eqref{linear-toe}. Now, this linear combinations vary depending on the sign chosen for each $s_i$. As we know for each block of an even word, the number of positive and negative signs in the relations among the $s_i$'s (i.e., the equation like \eqref{SC-relation}) are equal. Therefore there are $\displaystyle \prod_{i=1}^b{ {k_i-1} \choose {\frac{k_i}{2}}}= a_{\boldsymbol {\omega}}$ different linear combinations corresponding to each word $\boldsymbol{\omega}$, where $k_1,\ldots,k_b$ are the block sizes of $\boldsymbol {\omega}$.
 
From \eqref{SA-Pi(omega)} and the discussion above, it is easy to see that for a word $\boldsymbol{\omega}$ of length $2k$, 
\begin{align*}
& \frac{1}{p^{r+1}n^{b-r}}\big|\Pi_{S_{C^{s}}}(\boldsymbol{\omega})\big|=\\
& a_{\boldsymbol{\omega}}\bigg[\lfloor y_n \rfloor^{k-(r+1)} + \displaystyle \sum_{\phi \neq S_0\subset S^{-}}\lfloor y_n \rfloor^{|S^{-}- S_0|}\big|\big\{v_S: F(y_n L_{2i,n}^T(v_{S}))\leq y_n - \lfloor y_n \rfloor, \textbf{1}(2i \in S_0)\big\}\big|\bigg].
 \end{align*} 
 Therefore as $n \rightarrow \infty$,
 \begin{align}\label{SC-integral}
 \displaystyle \lim_{n \rightarrow \infty}\frac{1}{p^{r+1}n^{b-r}}\big|\Pi_{S_{C^{(s)}}}(\boldsymbol{\omega})\big|=& a_{\boldsymbol{\omega}}\bigg[\lfloor y \rfloor^{k-(r+1)} + \displaystyle \sum_{\phi \neq S_0\subset S^{-}}\lfloor y \rfloor^{|S^{-}- S_0|} \nonumber\\ 
 &\int_0^1\int_0^1 \cdots \int_0^1 \textbf{1}\big(F(y L_{2i}^T(v_{S}))\leq y - \lfloor y \rfloor) \ \forall 2i\in S_0\big) \ dv_S \bigg].
 \end{align}
 
When $y\geq 1$, the rhs of \eqref{SC-integral} is obviously positive. We next show that, $\int_0^1\int_0^1 \cdots \int_0^1 \textbf{1}\big(F(y L_{2i}^T(v_{S}))\leq y - \lfloor y \rfloor) \ \forall 2i\in S^{-}\big) \ dv_S \ >0$ when $y < 1$. 

First note that as $y<1, \lfloor y \rfloor=0$. Now note that we had previously concluded in the proof of part (i) of Lemma \ref{lem:toe-S} that for certain values of $v_S\in [0,1]^{b+1}$, $\textbf{1}(0 \leq L_{i}^T(v_S)\leq 1, \forall i \in S^{\prime})=1$. As, $\{v_S: 0 \leq L_{2i}^T(v_S)\leq 1, \forall i \in S^{-}\} \subset \{v_S: 0 \leq L_{i}^T(v_S)\leq 1, \forall i \in S^{\prime}\}$, for these chosen values of $v_S$, we have   $\textbf{1}(0 \leq L_{2i}^T(v_S)\leq 1, \forall 2i \in S^{-})=1$. Therefore, with this choice of $v_S\in [0,1]^{b+1}$,$$yL_{2i}^T(v_S)\leq y< 1 \implies F(yL_{2i}^T(v_S))\leq y.$$
 That is, the integral $\int_0^1\int_0^1 \cdots \int_0^1 \textbf{1}\big(F(y L_{2i}^T(v_{S}))\leq y - \lfloor y \rfloor) \ \forall 2i\in S^{-}\big) \ dv_S$ is positive. 

Hence the proof of part (i) is complete. 

\noindent (ii) Using the same arguments as in the proof of Lemma \ref{lem:toe}, it follows that 
$$\displaystyle \lim_{n \rightarrow \infty}\frac{1}{p^{r+1}n^{b-r}} | \Pi_{S_C}(\boldsymbol {\omega})|=0\ \ \text{if}\ \boldsymbol {\omega}\  \text{is not symmetric}.$$

Next, suppose $\boldsymbol{\omega}$ is a symmetric word with $b$ distinct letters and $(r+1)$ even generating vertices. Also suppose the letters make their first appearances at $i_1,i_2, \ldots,i_b$ positions in $\boldsymbol{\omega}$.

Using similar arguments as \eqref{T-pi-relation} in Lemma \ref{lem:toe} and \eqref{SC-3} in the proof of part (i), we have for $2i-1 \in S^{\prime}$, there is a unique choice of $\pi(2i-1)$, once the generating vertices have been chosen. For all $2i\in S^{\prime}, 1 \leq i< k$, we can have $\lfloor y_n \rfloor$ choices for $\pi(2i)$ as $1 \leq \pi(2i) \leq p$ and $y_n=p/n$. Moreover, there is an additional choice for if 
\begin{align}\label{C-1}
\displaystyle \sum_{j<2i} \beta_{2ij}\pi(i_j)+ m_{2i,n}\leq p - \lfloor y_n \rfloor n.
\end{align}
 Using the same notations and arguments as in part (i) we have
 \begin{align}
 F(y_n L_{2i,n}^T(v_{S}))\leq y_n - \lfloor y_n \rfloor
 \end{align}
where $L_{2i,n}^T$ is a particular set of linear combinations defined whose sign has been chosen as in \eqref{T-integral-gen}, (see proof of Lemma \ref{lem:toe}). Thus we have that 
\begin{align}\label{C-integral}
 \displaystyle \lim_{n \rightarrow \infty}\frac{1}{p^{r+1}n^{b-r}}\big|\Pi_{S_{C}}(\boldsymbol{\omega})\big|=& [\lfloor y \rfloor^{k-(r+1)} + \displaystyle \sum_{\phi \neq S_0\subset S^{-}}\lfloor y \rfloor^{|S^{-}- S_0|} \nonumber\\ 
 &\int_0^1\int_0^1 \cdots \int_0^1 \textbf{1}\big(F(y L_{2i}^T(v_{S}))\leq y - \lfloor y \rfloor) \ \forall 2i\in S_0\big) \ dv_S 
 \end{align}
As in the proof of part(i), we can show that when $y<1$,  $$\int_0^1\int_0^1 \cdots \int_0^1 \textbf{1}\big(F(y L_{2i}^T(v_{S}))\leq y - \lfloor y \rfloor) \ \forall 2i\in S^{-}\big) \ dv_S \ >0.$$ 
  
  This completes the prof of part (ii).
 \end{proof}

\providecommand{\bysame}{\leavevmode\hbox to3em{\hrulefill}\thinspace}
\providecommand{\MR}{\relax\ifhmode\unskip\space\fi MR }
\providecommand{\MRhref}[2]{%
  \href{http://www.ams.org/mathscinet-getitem?mr=#1}{#2}
}
\providecommand{\href}[2]{#2}

\bibliography{mybibfilefinal}

\begin{figure}[htp]
 \begin{subfigure}{.5\textwidth}
  \centering
  \includegraphics[width=.7\linewidth]{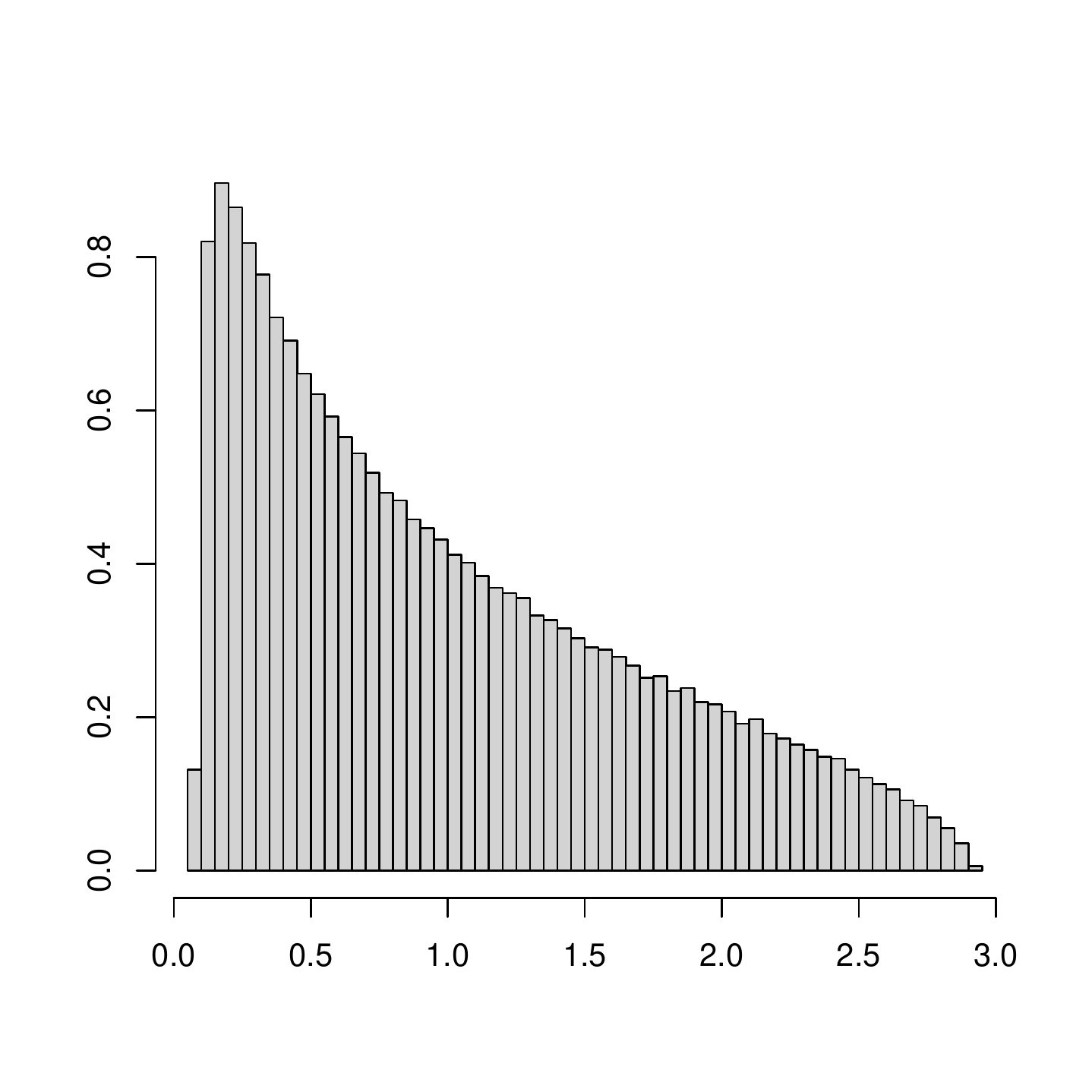}  
  \caption{Input is i.i.d $x_{ij}\sim N(0,1)/\sqrt{n}$ for every $n$.}
\end{subfigure}
\begin{subfigure}{.5\textwidth}
  \centering
  \includegraphics[width=.7\linewidth]{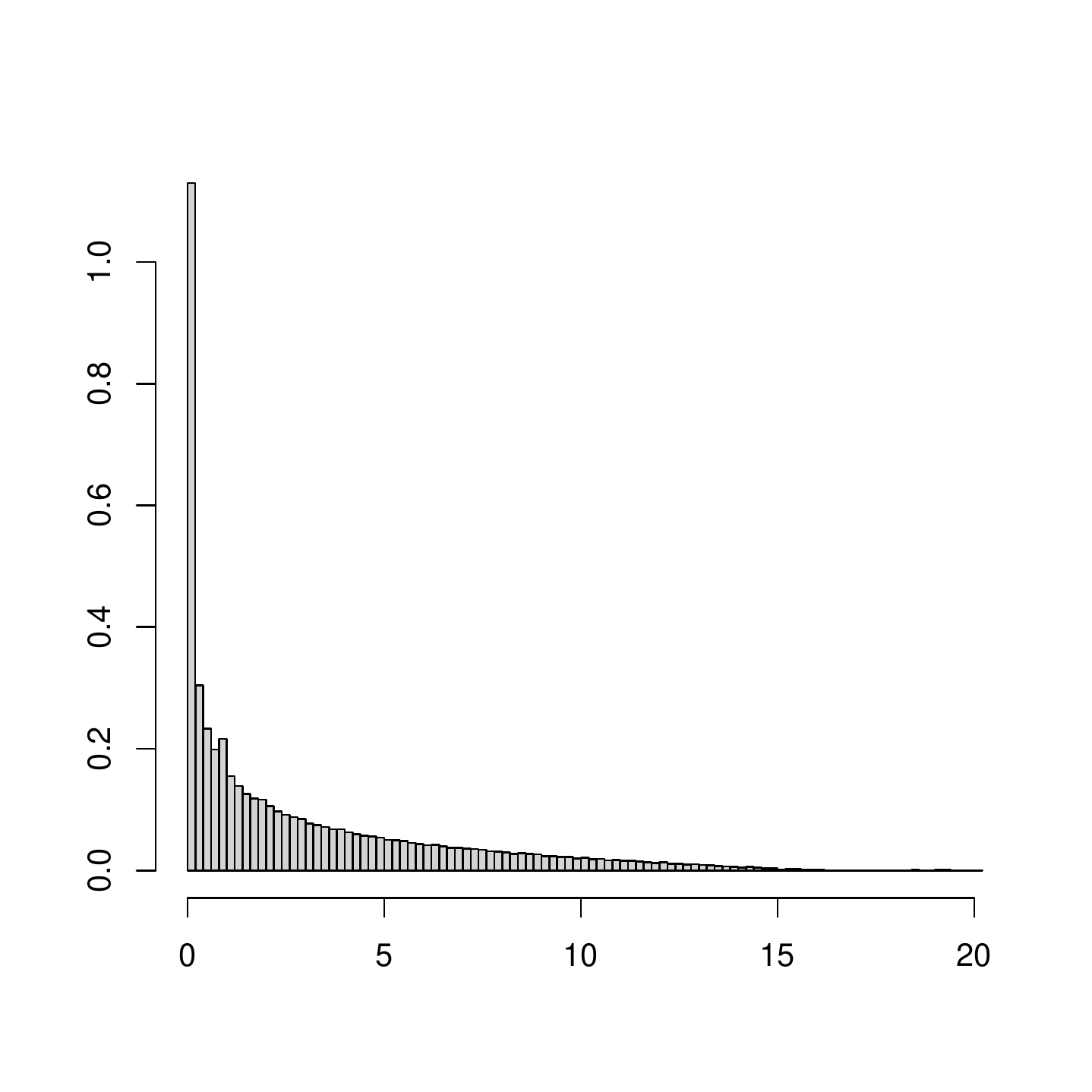}  
  \caption{Input is i.i.d $x_{ij} \sim Ber(3/n)$ for every $n$.}
\end{subfigure}
\begin{subfigure}{.5\textwidth}
 \centering
  \includegraphics[width=.7\linewidth]{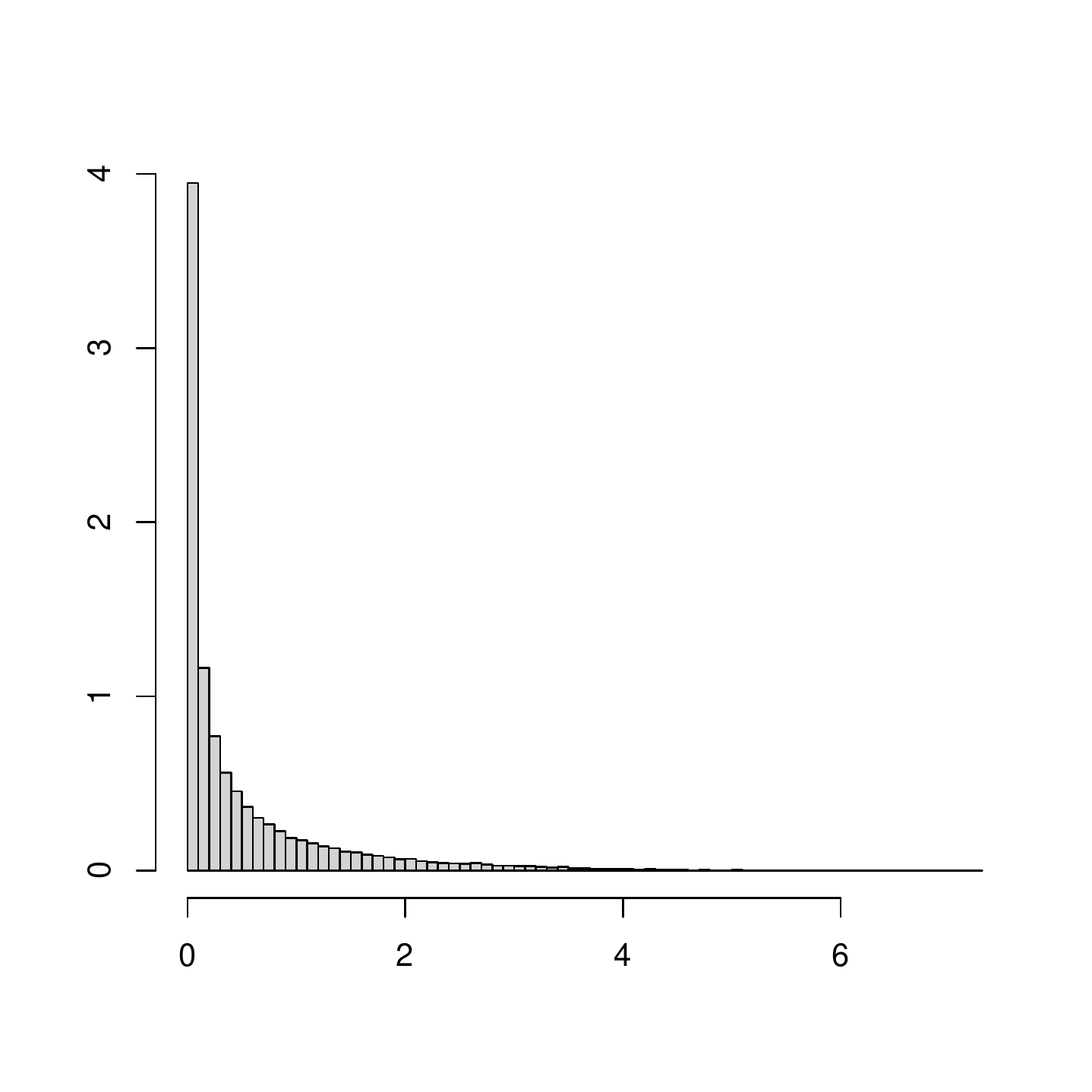}  
  \caption{Input is i.i.d $x_{ij} \sim \frac{(i+j)^2}{2n^2}Ber(3/n)$ for every $n$.}
\end{subfigure}
\begin{subfigure}{.5\textwidth}
 \centering
  \includegraphics[width=.7\linewidth]{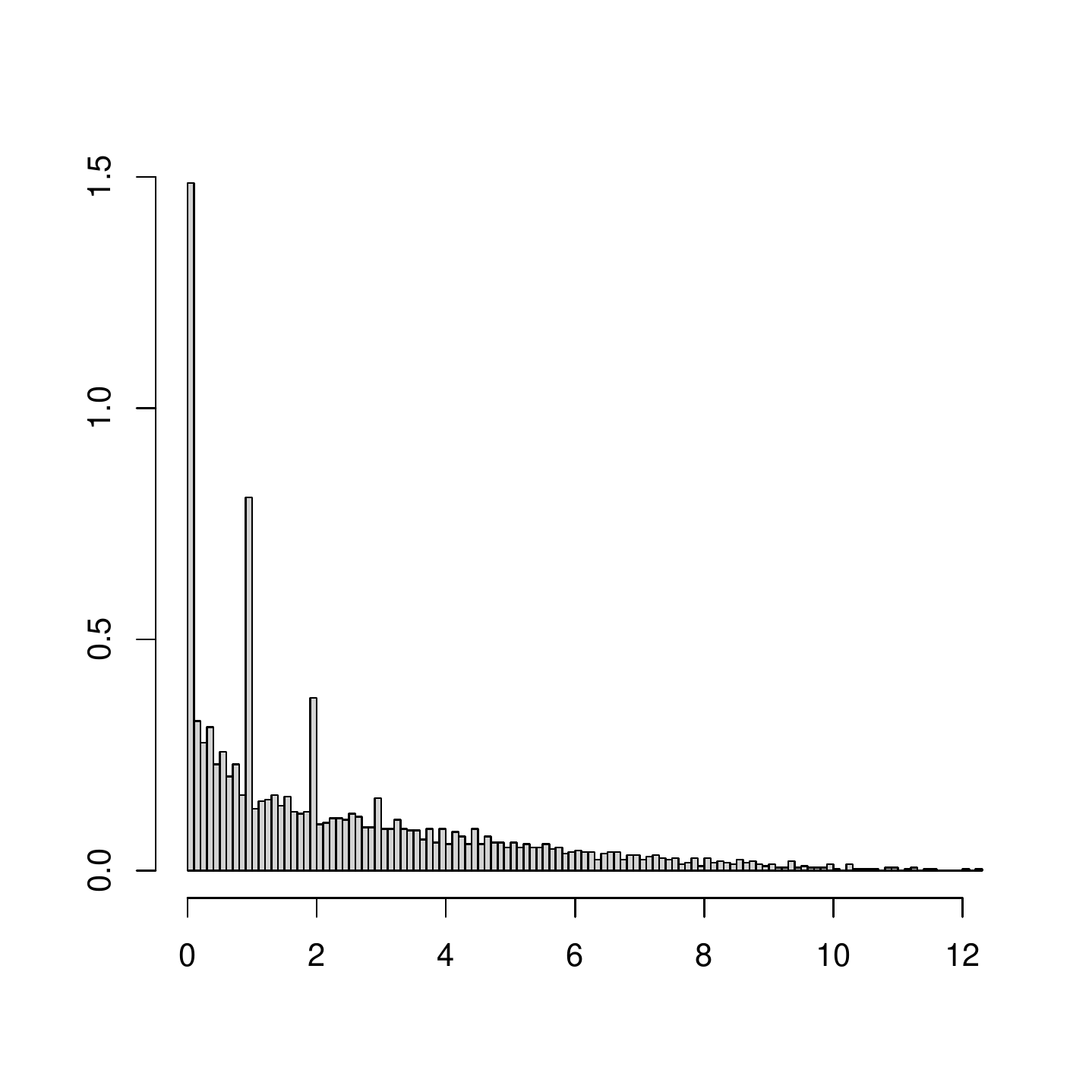}  
  \caption{Input is i.i.d $x_{ij} \sim Ber(3/n), i \leq j$ and $0$ otherwise. }
\end{subfigure}
\caption{Histogram of the eigenvalues of $S$ for $p=1000,n=2000$, 30 replications. }
\label{fig:S1}
\end{figure}

 \begin{figure}[htp]
\begin{subfigure}{.5\textwidth}
  \centering
  \includegraphics[width=.7\linewidth]{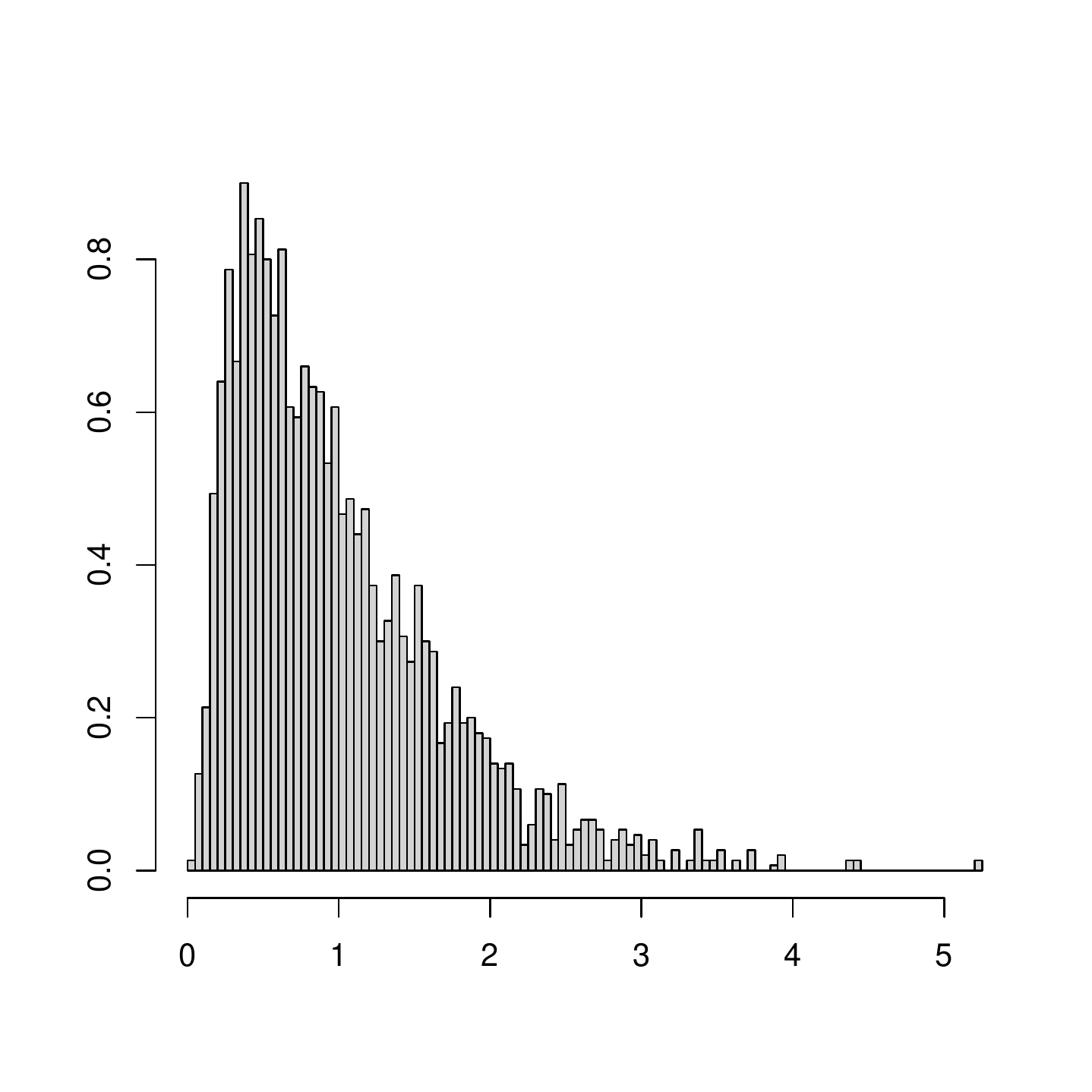}  
\end{subfigure}
\begin{subfigure}{.5\textwidth}
 \centering
  \includegraphics[width=.7\linewidth]{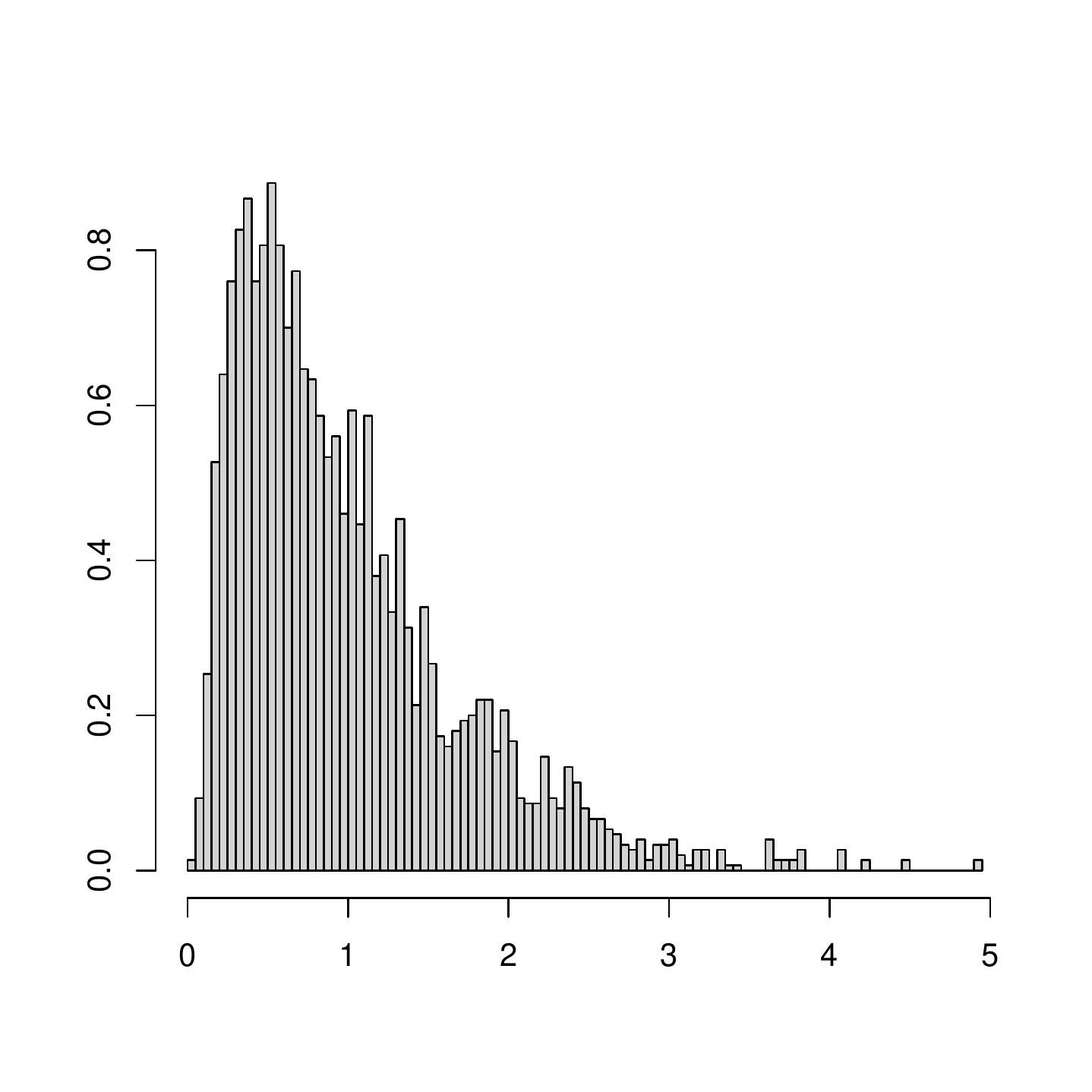}  
\end{subfigure}
\\
\begin{subfigure}{.5\textwidth}
  \centering
  \includegraphics[width=.7\linewidth]{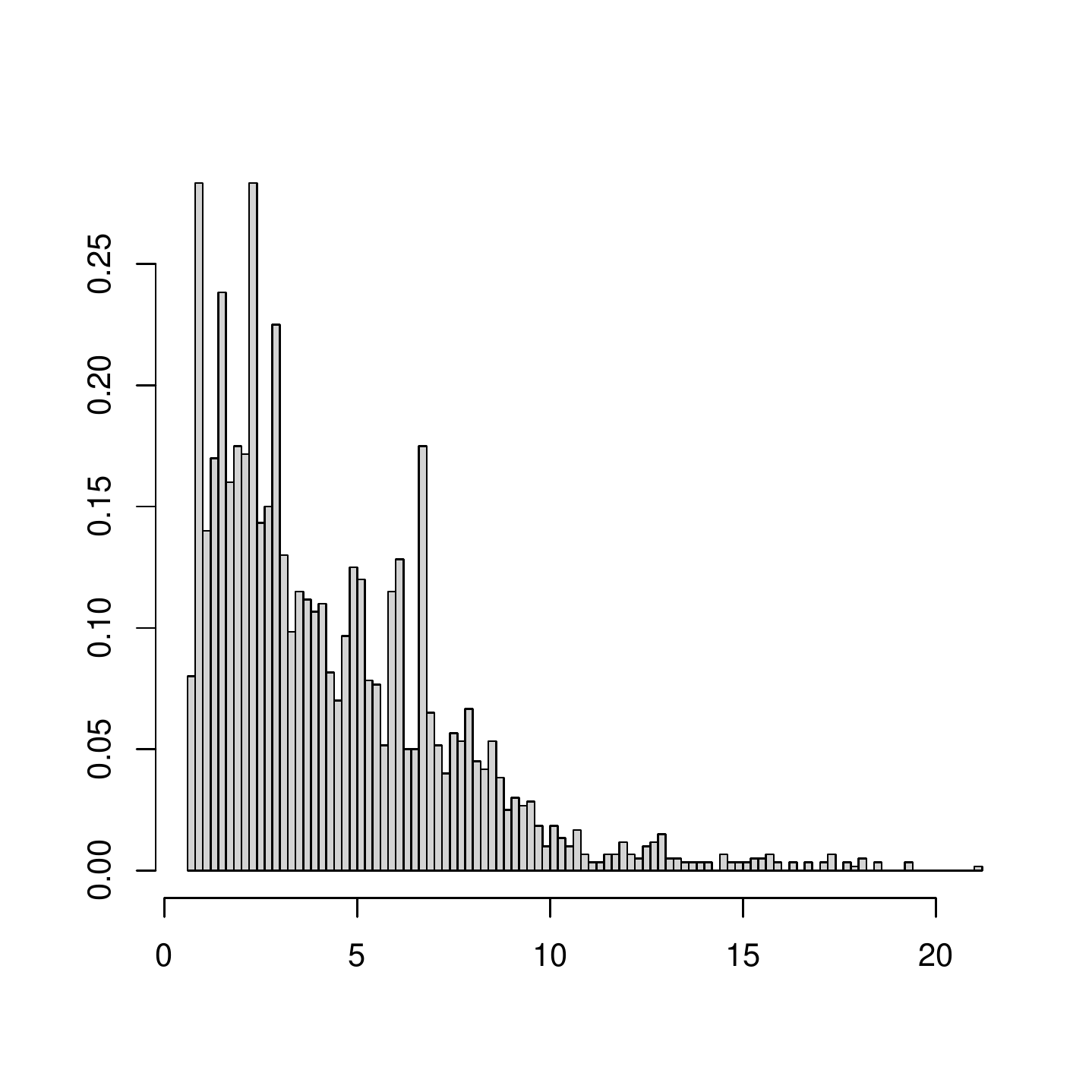}  
\end{subfigure}
\begin{subfigure}{.5\textwidth}
  \centering
  \includegraphics[width=.7\linewidth]{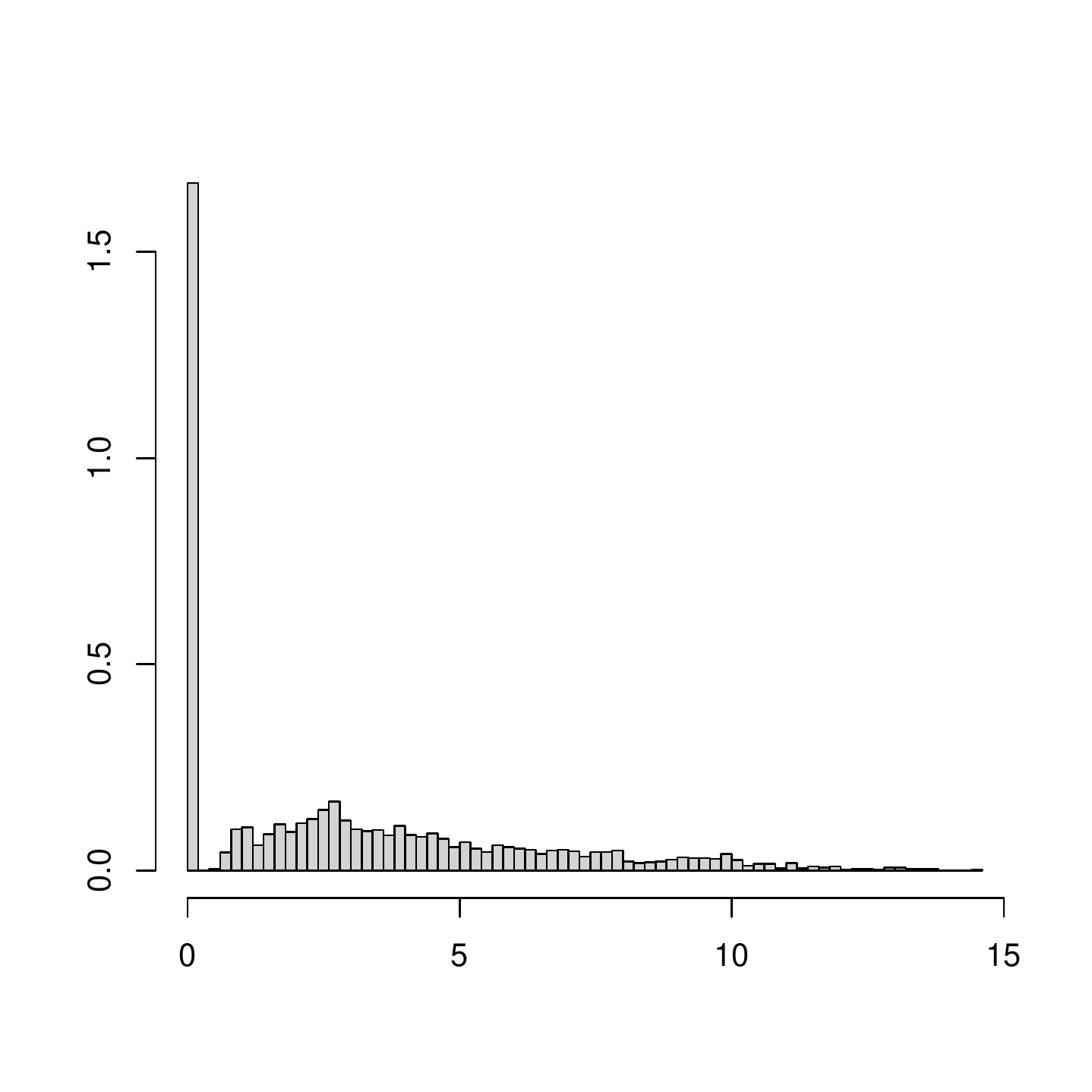}  
\end{subfigure}

\caption{Histogram of the eigenvalues of $S_{R^{(s)}}$ with entries i.i.d. $N(0,1)/\sqrt{n}$ (top row) and  i.i.d. Ber$(3/n)$ for every $n$ (bottom row), $p=1000,n=2000$, 2 replications.}
\label{fig:SA-1}
\end{figure}
 \begin{figure}[htp]
 \begin{subfigure}{.5\textwidth}
  \centering
  \includegraphics[width=.7\linewidth]{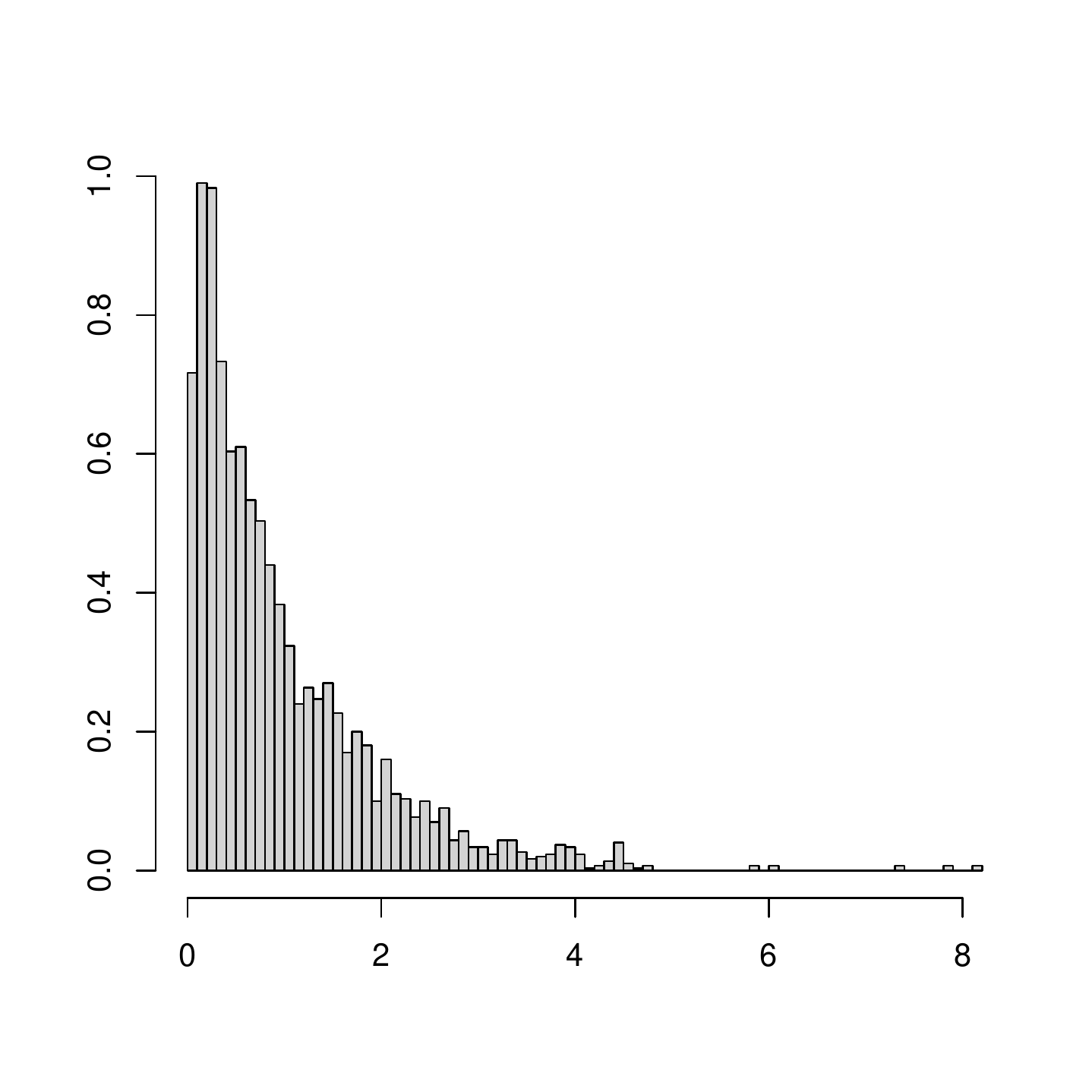}  
\end{subfigure}
\begin{subfigure}{.5\textwidth}
 \centering
  \includegraphics[width=.7\linewidth]{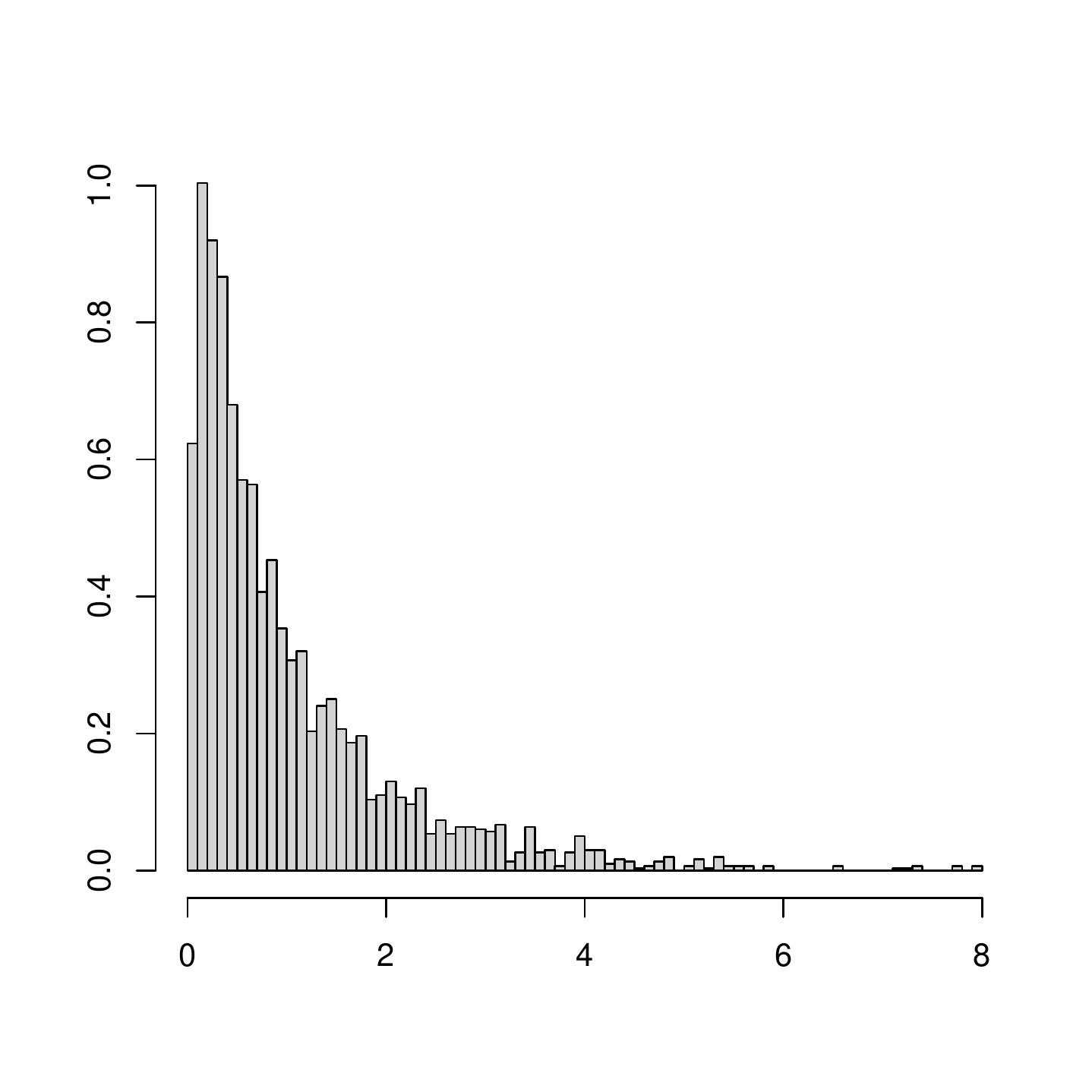}  
\end{subfigure}
\\
\begin{subfigure}{.5\textwidth}
  \centering
  \includegraphics[width=.7\linewidth]{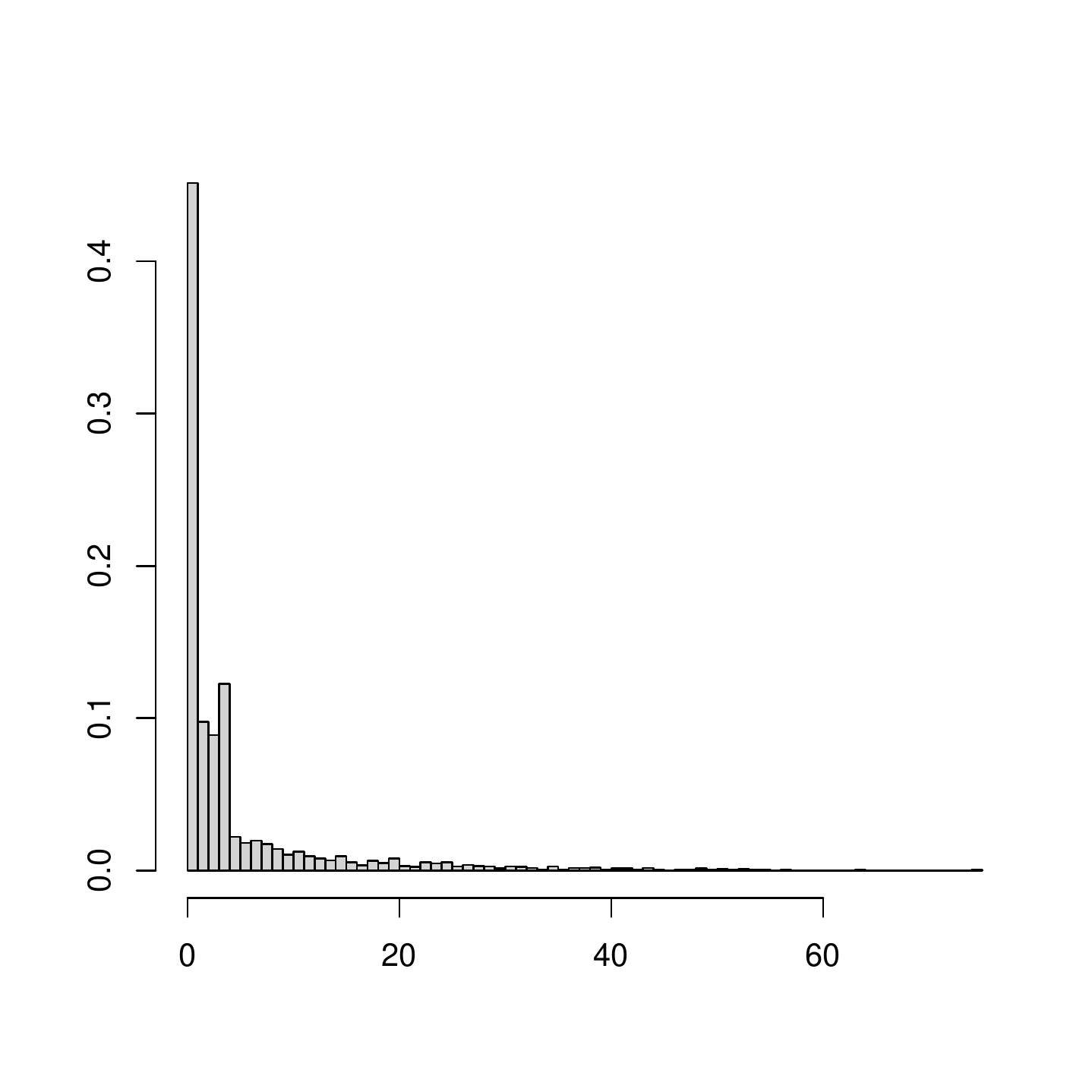}  
\end{subfigure}
\begin{subfigure}{.5\textwidth}
 \centering
  \includegraphics[width=.7\linewidth]{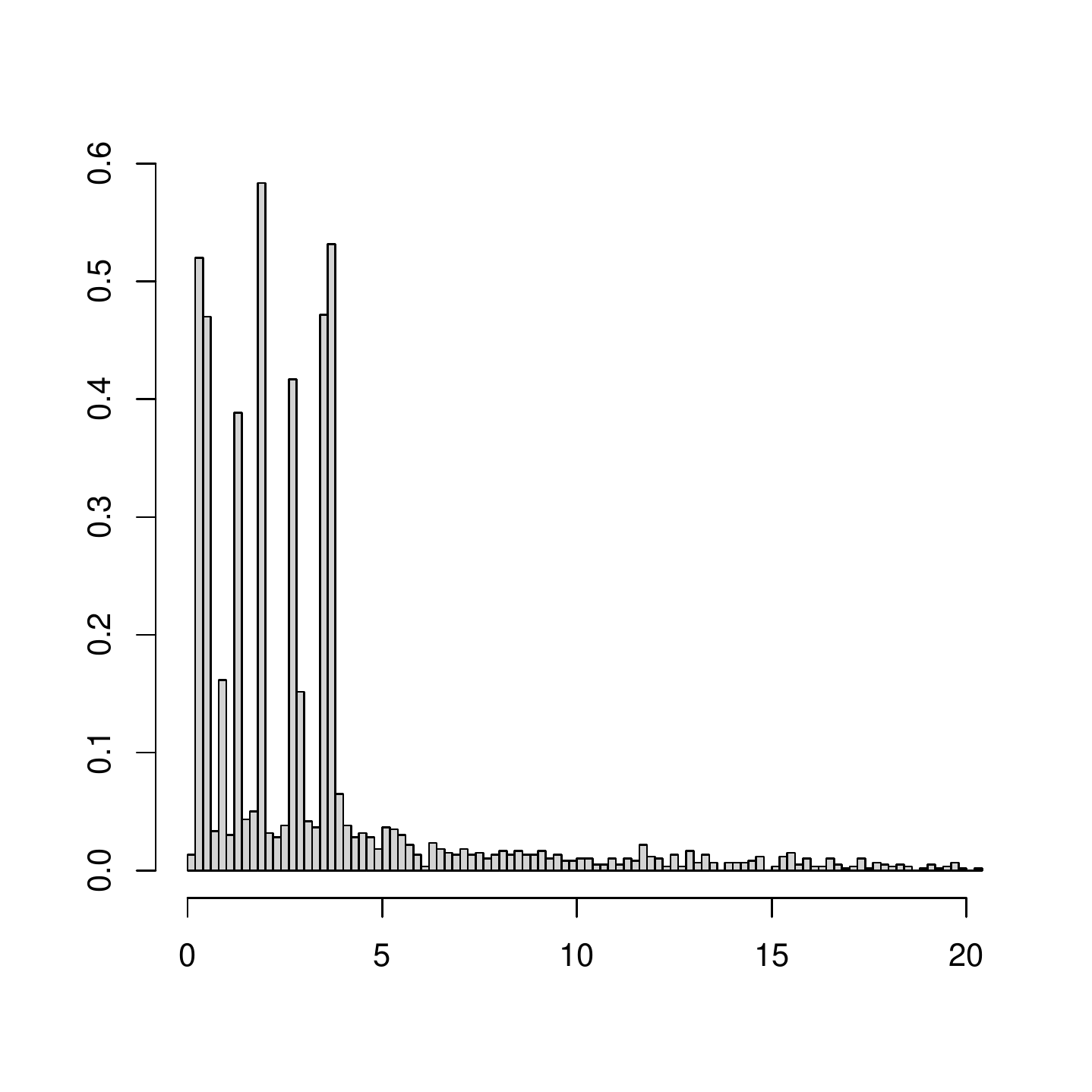}  
\end{subfigure}
\caption{Histogram of the eigenvalues of $S_{C^{(s)}}$ with entries i.i.d. $N(0,1)/\sqrt{n}$(top row) and  i.i.d. Ber$(3/n)$ for every $n$ (bottom row), $p=1000,n=2000$, 2 replications.}
\label{fig:SA-2}
\end{figure}

\begin{figure}[htp]
 \begin{subfigure}{.5\textwidth}
  \centering
  \includegraphics[width=.7\linewidth]{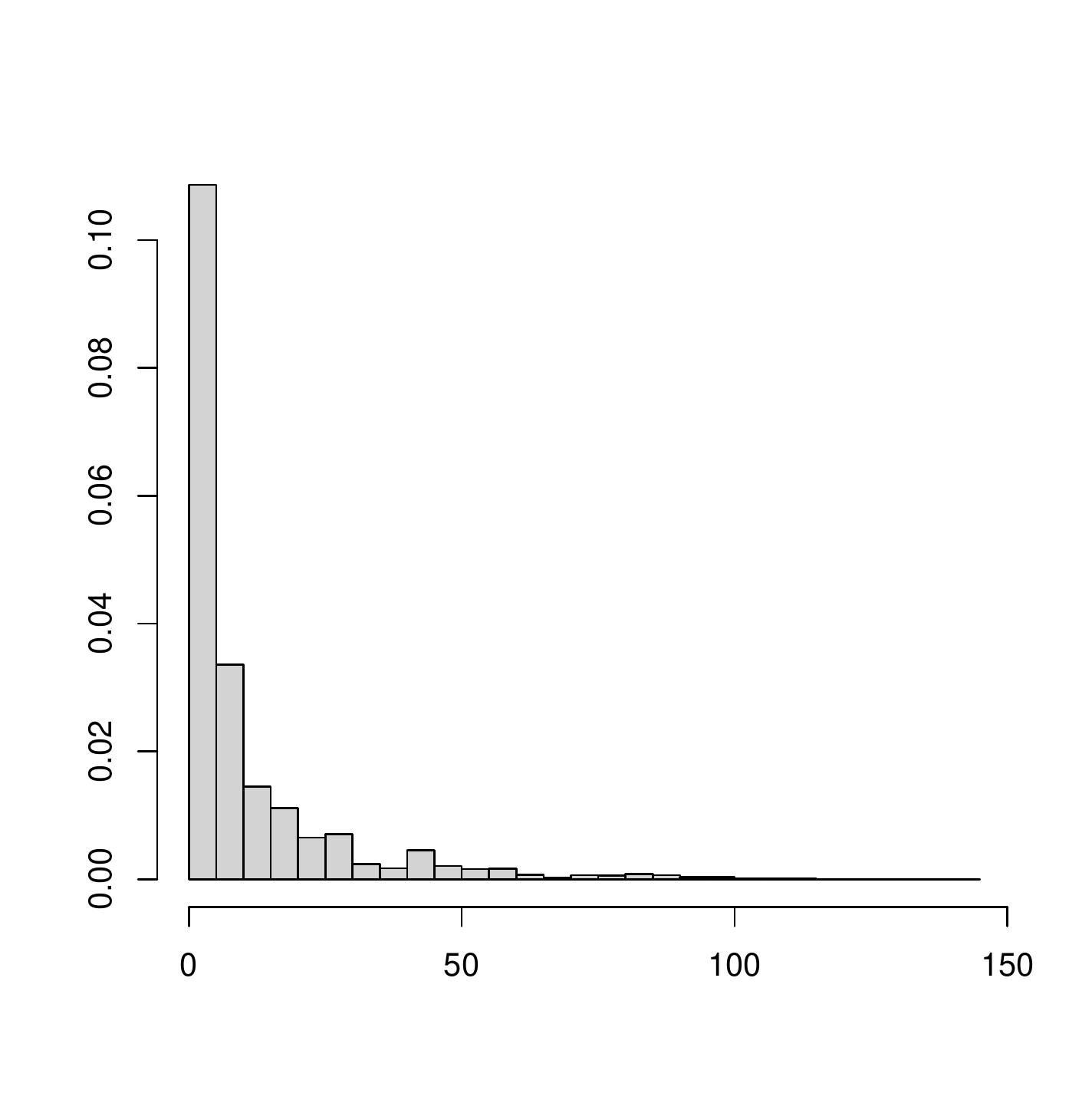}  
  \caption{$S_{T^{(s)}}$}
\end{subfigure}
\begin{subfigure}{.5\textwidth}
  \centering
  \includegraphics[width=.7\linewidth]{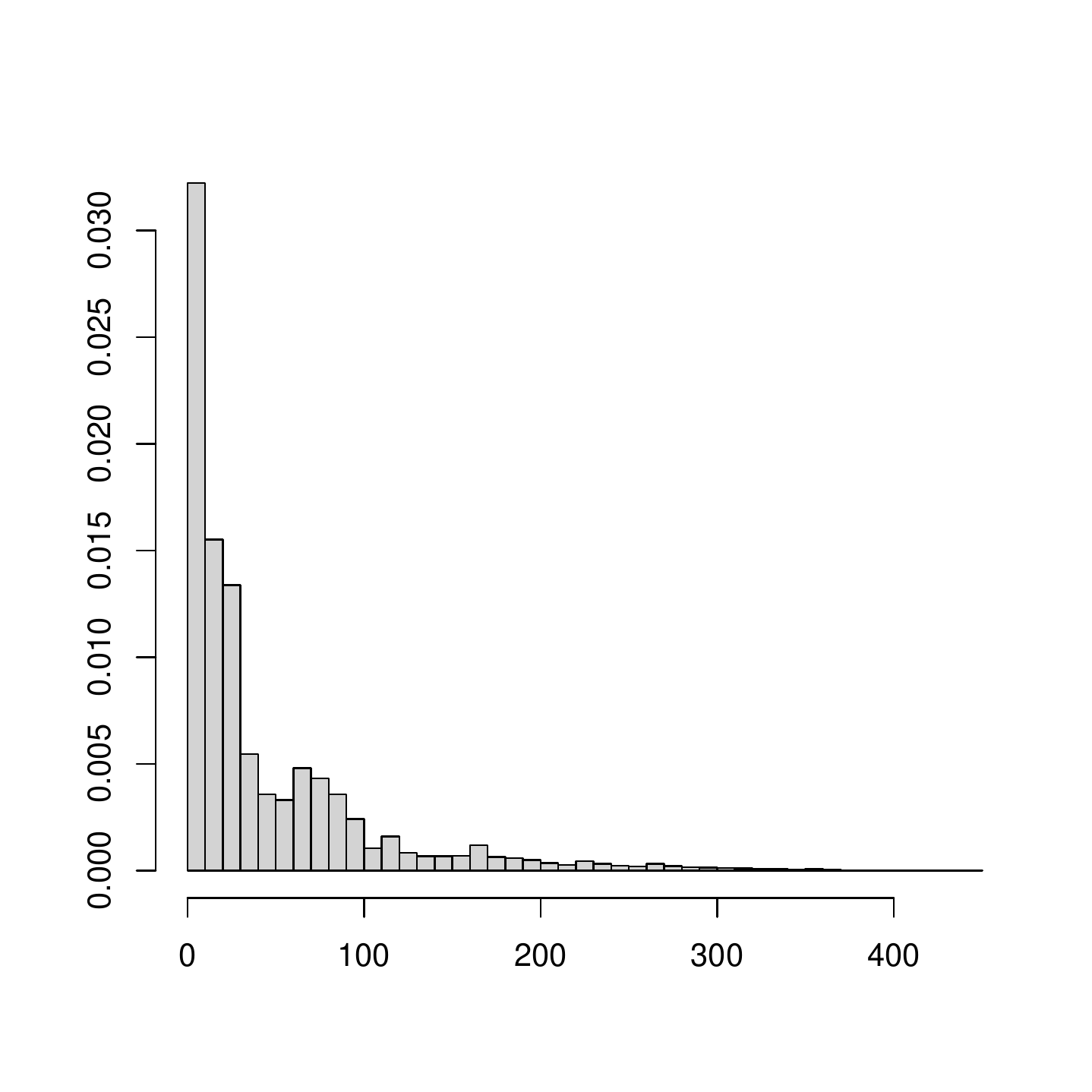}  
  \caption{$S_{H^{(s)}}$}
\end{subfigure}
\begin{subfigure}{.5\textwidth}
 \centering
  \includegraphics[width=.7\linewidth]{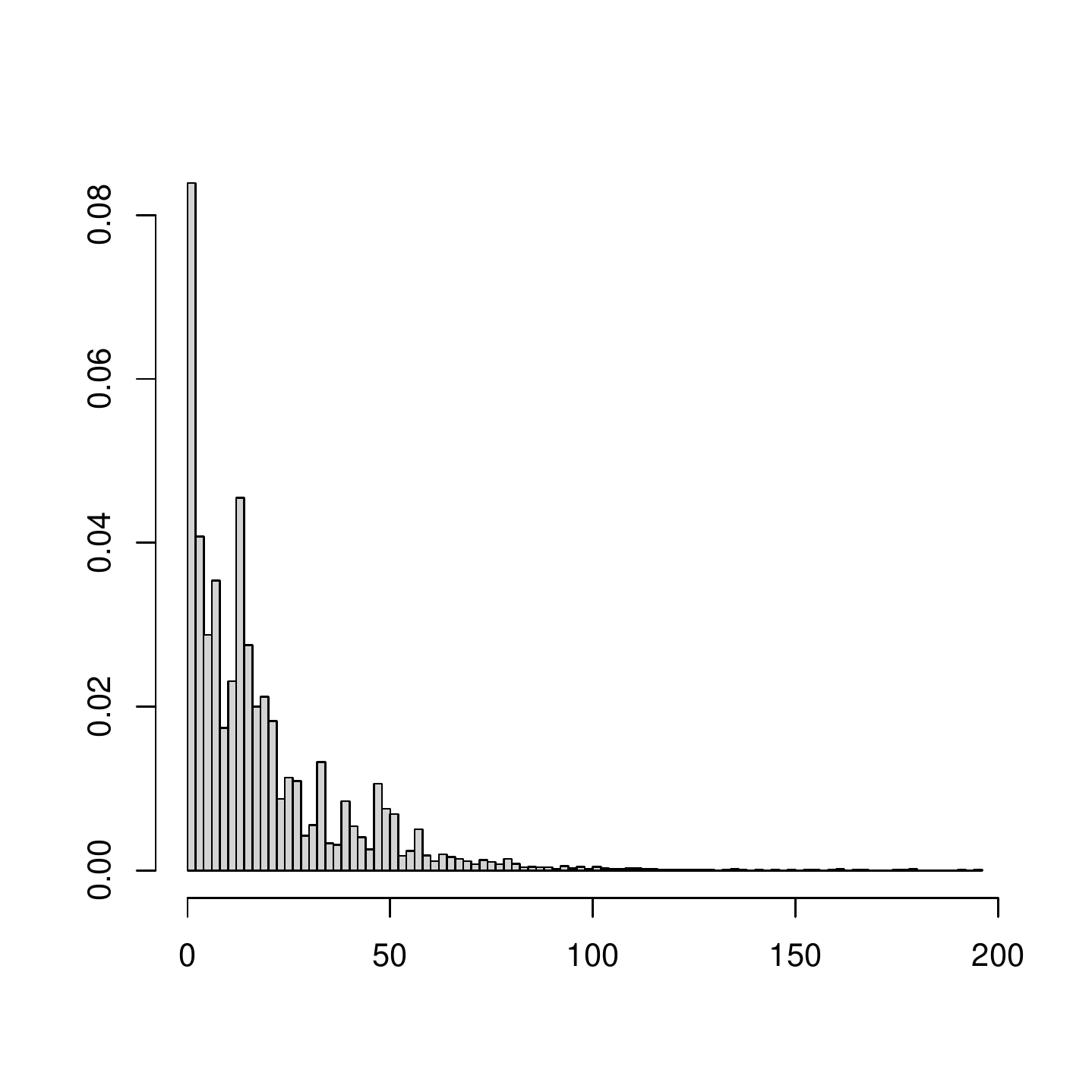}  
  \caption{$S_{R^{(s)}}$}
\end{subfigure}
\begin{subfigure}{.5\textwidth}
 \centering
  \includegraphics[width=.7\linewidth]{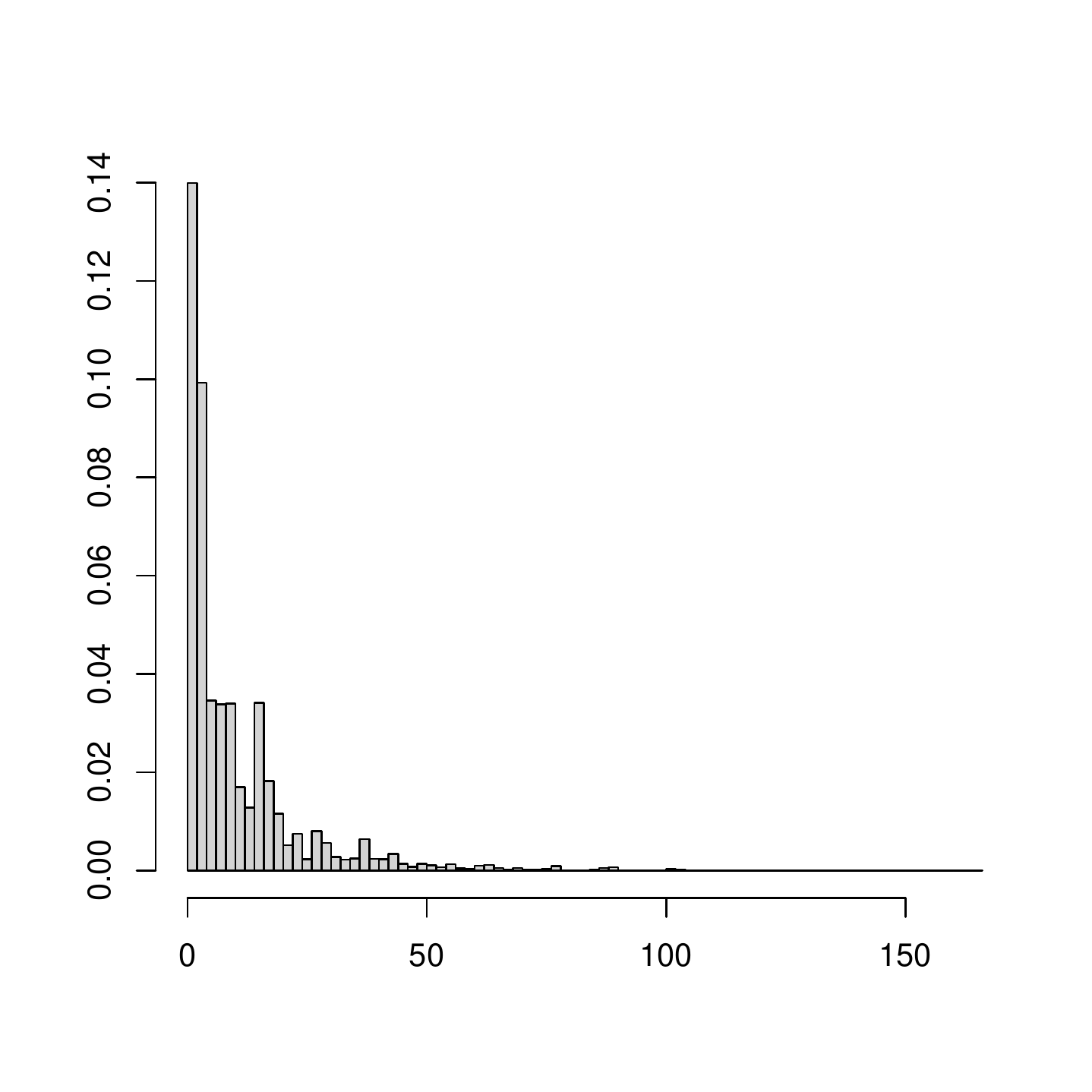}  
  \caption{$S_{C^{(s)}}$ }
\end{subfigure}
\caption{Histogram of the eigenvalues of $S_A$ for $p=1000,n=2000$, 30 replications with variance profile $\sigma(x)= x^2+4x$ and $x_{ij,n}\sim Ber(3/n)$. }
\label{fig:S3}
\end{figure}

\end{document}